 \theoremstyle{definition}  
  \newtheorem{defn}{Definition}[section]
  \newtheorem{eg}[defn]{Example}
   \newtheorem{rmk}[defn]{Remark}
  \theoremstyle{plain}  
  \newtheorem{thm}[defn]{Theorem}
  \newtheorem{lem}[defn]{Lemma}
  \newtheorem{prop}[defn]{Proposition}
  \newtheorem{cor}[defn]{Corollary}
  \newtheorem{conj}[defn]{Conjecture}
  \newtheorem{que}[defn]{Question}
  \theoremstyle{remark}
 \numberwithin{equation}{section}
 \setlist[enumerate]{font=\upshape,noitemsep, topsep=0pt} 
 \setlist[itemize]{noitemsep, topsep=0pt}
\title{Matricial ranges, dilations, and unital contractive maps}
\author{Pankaj Dey}
\address{Pankaj Dey, Department of Mathematics, Indian Institute of Technology Bombay, Powai, Mumbai 400076, India.}
\email{pankajdey2022@gmail.com, pankaj@math.iitb.ac.in}
 \author{Atanu Dhang}
\address{Atanu Dhang, School of Mathematical and Computational Sciences, Indian Association for the Cultivation of Science, 2A and 2B Raja S C Mullick Road, Jadavpur, Kolkata-700032, India.}
 \email{atanudhang@gmail.com, smcsad2713@iacs.res.in}
\author{Mithun Mukherjee}
\address{Mithun Mukherjee, School of Mathematical and Computational Sciences, Indian Association for the Cultivation of Science, 2A and 2B Raja S C Mullick Road, Jadavpur, Kolkata-700032, India.}
\email{mithun.mukherjee@iacs.res.in, mithunmukh@gmail.com}
\begin{document}


\begin{abstract}
	Let $J_n$ be the Jordan block of size $n$ with all eigen values zero. Arveson introduced the notion of the matricial range of an operator in his remarkable article called Subalgebras of $C^*$-algebras II (Acta Math, 128, 1972) and established that every unital positive map on the operator system generated by $J_2$ is completely positive. This describes the matricial range of $J_2$ as the set of all matrices with numerical radius at most $\frac{1}{2}$. Later, Choi and Li generalize this result of Arveson and prove that every unital positive map on the operator system generated by any $2\times 2$ matrix or any $3\times3$ matrix with a reducing subspace is completely positive. After fifty years of the above result of Arveson, the matricial range of $J_n$ for $n\geq 3$ has not been characterized. This article aims to investigate this long-standing open problem for $n=3$. We begin by establishing a structure theorem for a dilation of an operator $B$ satisfying $BB^*+B^*B=I$ and then investigate whether every $B\in\mathbb{M}_n$ satisfying $BB^*+B^*B\leq I_n$ admits a dilation $\widetilde{B}$ for which $\widetilde{B}\widetilde{B}^*+\widetilde{B}^*\widetilde{B}=I$. This study plays the central role to the development of this paper. We use this to prove that every unital contractive map on the operator system generated by $J_3$ is $2$-positive and obtain some partial results towards characterizing the matricial range of $J_3$. Next, we study unital contractive maps on operator systems generated by $4\times 4$ normal matrices, and show that this is equivalent to studying a unital contractive map on the operator system generated by $T=\text{diag}(\lambda,-1,i,-i)$, where $\Re{(\lambda)}\geq 0$. We prove that every unital contractive map on the operator system generated by $T=\text{diag}(1,-1,i,-i)$ is completely positive. We conclude with some examples, remarks on the matricial range of $J_n$ and a few applications on the constrained unitary dilations of $J_2$.
\end{abstract}

\keywords{Matricial range, Dilation, Operator system, Contractive map, Completely positive map.}

\subjclass[2020]{46L07, 47A20, 47A12, 46L52, 47L25}

\maketitle


\section{Introduction}

Let $\mathcal{B(\mathcal{H})}$ be the algebra of all bounded linear maps on a Hilbert space $\mathcal{H}$. If $\mathcal{H}$ is an $n$-dimensional Hilbert space, we identify $\mathcal{B}(\mathcal{H})$ with $\mathbb{M}_n,$ the space of all $n\times n$ complex matrices. A set $\mathcal{K}\subseteq\mathcal{B}(\mathcal{H})$ is said to be \textit{$C^*$-convex} if $T_1,\hdots,T_k\in\mathcal{K}$ and $V_1,\hdots,V_k\in\mathcal{B}(\mathcal{H})$ with $\sum_j^kV_j^*V_j=I$ then $\sum_{j=1}^{k}V_j^*T_jV_j\in\mathcal{K}$. This generalizes the notion of linear convexity. Given $T\in\mathcal{B}(\mathcal{H})$, the \textit{$C^*$-convex hull generated by $T$}, denoted by $C^*$-conv$\{T\}$, is defined as the smallest $C^*$-convex set containing $T$, that is,
\begin{align*}
	C^*\text{-conv}\{T\}=\left\{\sum_{j=1}^kV_j^*TV_j: V_j\in\mathcal{B}(\mathcal{H})\text{ with }\sum_{j=1}^kV_j^*V_j=I\right\}.
\end{align*}

In 1972, Arveson \cite{WA} introduced the notion of the matricial range of an operator in his seminal article called Subalgebras of $C^*$-algebras II that generalizes the concept of the $C^*$-convex hull generated by an operator. Let $T\in\mathcal{B}(\mathcal{H})$ and $n\in\mathbb{N}$. The $n$th matricial range of $T$, denoted by $W_n(T)$, is defined as
\begin{align*}
	W_n(T)=\left\{\varphi(T):\varphi:\mathcal{OS}(T)\rightarrow\mathbb{M}_n\text{ is unital completely positive map}\right\}.
\end{align*}

By Choi-Kraus \cite{Cho75} decomposition of the completely positive maps, we observe that $W_n(T)$ coincides with $C^*$-conv$\{T\}$ whenever $T\in\mathbb{M}_n$. It can be checked that $W_n(T)$ is $C^*$-convex and compact set. If $n=1$, $W_n(T)$ coincides with the closure of the classical numerical range. The following properties of the matricial range are immediate:

\begin{itemize}
	\item[(P1)] $W_n(T^*)=\{X^*: X\in W_n(T)\}$.
	\item[(P2)] $W_n(U^*TU)=W_n(T)$, where $U\in\mathcal{B}(\mathcal{H})$ is unitary.
	\item[(P3)] $W_n(\alpha T+\beta I)=\alpha W_n(T)+\beta$, where $\alpha,\beta\in\mathbb{C}$.
\end{itemize}

Arveson \cite{WA} described the matricial range of a normal operator as the $C^*$-convex hull of its spectrum. Let $T\in\mathcal{B}(\mathcal{H})$ be normal and $n\in\mathbb{N}$. Arveson showed that
\begin{align*}
	W_n(T)=\overline{\left\{\sum_j\lambda_jH_j: \lambda_j\in\sigma(T), H_j\geq 0 \text{ for every } j, \text{ and }\sum_j H_j=I\right\}}.
\end{align*} 
He also determined that the matricial ranges of both the unilateral and bilateral shifts coincide with the set of all contractions. The study of this set has attracted considerable attention. Some relevant references include \cite{BS, DF, FM, LPS, LP2}.

The characterization of the matricial range of an operator $T$ has led to significant interest in the study of unital completely positive maps on the operator system $\text{span}\{I,T,T^*\}$. Let $J_n$ be the Jordan block of size $n$ with all eigenvalues $0$. Arveson \cite{WA} proved that every unital positive map from $\text{span}\{I,J_2,J_2^*\}$ to $\mathcal{B}(\mathcal{H})$ is completely positive. This descibes the matricial range of $J_2$ as the set of all matrices whose numerical radius is at most $\frac{1}{2}$. The proof of this statement is far from trivial and depends on subtle results of Ando \cite{TA}. A comprehensive survey for this result can be found in \cite{MA}. An operator $T$ is said to be \textit{quadratic} if it satisfies $T^2+bT+cI=0$ for some $b,c\in\mathbb{C}$. Tso-Wu \cite{TW} generalize the above results of Arveson for quadratic operators. Let $T$ be a quadratic operator. Tso-Wu \cite{TW} showed that every unital positive map from $\text{span}\{I,T,T^*\}$ to $\mathcal{B}(\mathcal{H})$ is completely positive.

In 2001, Choi-Li \cite{CL} furthur generalize the above results for $3\times 3$ matrices which have a reducing subspace. Suppose $T\in\mathbb{M}_3$ has a reducing subspace. Choi-Li \cite{CL} proved that every unital positive map from $\text{span}\{I,T,T^*\}$ to $\mathcal{B}(\mathcal{H})$ is completely positive. The proof relies on a nontrivial result on constrained unitary dilations of a contraction: If $T\in\mathcal{B}(\mathcal{H})$ is a contraction with $T+T^*\leq \mu I$ for some $\mu\in\mathbb{R}$ then $T$ has a unitary dilations $U\in\mathcal{B}(\mathcal{H\oplus H})$ satisfying $U+U^*\leq \mu I$. Subsequently, in 2019, Li-Poon \cite{LP} provided an alternative proof of the same result, employing different techniques from the theory of positive and completely positive maps.  An operator system is said to be \textit{maximal} if every positive map from it to another operator system is completely positive. Li-Poon \cite{LP3} characterize the maximal operator system which extends some of the above results.

After fifty years of the above result of Arveson, the matricial range of $J_n$ for $n\geq 3$ has not been characterized. In fact, there is not even a conjecture describing the matricial range of $J_n$ for $n\geq 3$. The present article aims to investigate this long-standing open problem for $n=3$. We begin by establishing a structure theorem for a dilation of an operator $B$ satisfying $BB^*+B^*B=I$ and then investigate whether every $B\in\mathbb{M}_n$ satisfying $BB^*+B^*B\leq I_n$ admits a dilation $\widetilde{B}$ for which $\widetilde{B}\widetilde{B}^*+\widetilde{B}^*\widetilde{B}=I$. This study plays the crucial role to the development of this paper. We use this to prove that every unital contractive map on the operator system generated by $J_3$ is $2$-positive and obtain some partial results towards characterizing the matricial range of $J_3$. Next, we study unital contractive maps on operator systems generated by $4\times 4$ normal matrices, and show that this is equivalent to studying a unital contractive map on the operator system generated by $T=\text{diag}(\lambda,-1,i,-i)$, where $\Re{(\lambda)}\geq 0$. We prove that every unital contractive map on the operator system generated by $T=\text{diag}(1,-1,i,-i)$ is completely positive. As a consequence, we deduce that every unital contractive map on the operator system generated by normal matrices obtained by an invertible affine transformation to $T=\text{diag}(1,-1,i,-i)$ is also completely positive. We discuss some remarks on the matricial range of $J_n$ and present several examples of contractive and non-contractive maps on operator systems generated by normal matrices and derive some applications on the constrained unitary dilations of $J_2$. We conclude with an example of a unital contractive map on the operator system generated by a $3\times 3$ matrix which is not completely positive.

The structure of the paper is as follows. In Section \ref{preliminaries}, we provide the necessary preliminaries to set the stage for our main results. Next, we briefly discuss about the matricial range of an operator in Section \ref{matricial range}. In Section \ref{dilation}, we establish a structure theorem for a dilation of an operator $B$ satisfying $BB^*+B^*B=I$ and then investigate whether every $B\in\mathbb{M}_n$ satisfying $BB^*+B^*B\leq I_n$ admits a dilation $\widetilde{B}$ for which $\widetilde{B}\widetilde{B}^*+\widetilde{B}^*\widetilde{B}=I$.  We use this to prove that every unital contractive map on the operator system generated by $J_3$ is $2$-positive and obtain some partial results towards characterizing the matricial range of $J_3$ in Section \ref{2-positive}, \ref{3-positive}. In Section \ref{normal}, we study unital contractive maps on operator systems generated by $4\times 4$ normal matrices, and show that this is equivalent to studying a unital contractive map on the operator system generated by $T=\text{diag}(\lambda,-1,i,-i)$, where $\Re{(\lambda)}\geq 0$. We prove that every unital contractive map on the operator system generated by $T=\text{diag}(1,-1,i,-i)$ is completely positive. We conclude with some examples, remarks on the matricial range of $J_n$ and a few applications on the constrained unitary dilations of $J_2$ in Section \ref{remarks and examples}.

\section{Preliminaries}\label{preliminaries}

In this section, we present essential definitions and preliminary results to set the stage for discussions in the subsequent sections. Let us begin with the following definition.

\begin{defn}[Operator system]
	Let $\mathcal{A}$ be a $C^*$-algebra with unit $1$. A subspace $\mathcal{S}\subseteq\mathcal{A}$ is said to be an \textit{operator system} if $1\in\mathcal{S}$ and $	\mathcal{S}=\mathcal{S}^*:=\{a^*:a\in\mathcal{S}\}$.
\end{defn}

 Let $T\in\mathcal{B(\mathcal{H})}$. Denote $\mathcal{OS}(T):=\text{span}\{I,T,T^*\}$. Note that $\mathcal{OS}(T)$ is an operator system. We call $\mathcal{OS}(T)$ as an \textit{operator system generated by $T$}.

 Let $\mathcal{A, B}$ be two $C^*$-algebras with unit $1$ and $\mathcal{S}\subseteq\mathcal{A}$ be an operator system. Suppose $\varphi:\mathcal{S}\rightarrow\mathcal{B}$ be a map. Define $\varphi_n:\mathbb{M}_n(\mathcal{S})\rightarrow \mathbb{M}_n(\mathcal{B})$ such that
 \begin{align*}
 	\varphi_n((a_{ij}))=(\varphi(a_{ij})),\;  (a_{ij})\in\mathbb{M}_n(\mathcal{S}).
 \end{align*}
 Then we define the following:
 \begin{itemize}
 	\item $\varphi$ is \textit{positive} if $\varphi(a)\geq 0$ whenever $a\geq 0$.
 	\item $\varphi$ is \textit{unital} if $\varphi(1)=1$.
 	\item $\varphi$ is $n$-positive if $\varphi_n$ is positive.
 	\item $\varphi$ is \textit{completely positive} (CP) if $\varphi$ is $n$-positive for every $n$.
 	\item $\varphi$ is \textit{unital completely positive} (UCP) if $\varphi$ is unital and completely positive.
 	\item $\varphi$ is \textit{contractive} if $\Vert \varphi(a)\Vert\leq\Vert a\Vert$ for all $a\in\mathcal{S}$.
 \end{itemize}

 One can readily verify the following chain of implications:
 \begin{align*}
 	\varphi \text{ is CP}\Rightarrow \varphi \text{ is } 2\text{-positive}\Rightarrow\varphi \text{ is contractive}\Rightarrow\varphi \text{ is positive}.
 \end{align*}
 
 However, none of the converse implications hold in general. Let $T\in\mathcal{B(\mathcal{H})}$. The \textit{numerical range} of $T$, denoted by $W(T)$, is defined as
 \begin{align*}
 	W(T)=\{\langle Tf,f\rangle:\Vert f\Vert=1\}.
 \end{align*}
 $W(T)$ is non empty, convex set. Moreover, if $T$ acts on finite dimension then $W(T)$ is compact. The closure of the numerical range of a normal operator is the closed convex hull of its spectrum. The \textit{numerical radius} of $T$, denoted by $w(T)$, is defined as
 \begin{align*}
 	w(T)=\sup_{\Vert f\Vert=1}\vert\langle Tf,f\rangle\vert.
 \end{align*}

 Let $H\in\mathcal{B}(\mathcal{H})$ be self-adjoint. Define
 \begin{align}
 	\lambda_1(H):=\sup_{\Vert f\Vert=1}\langle Hf,f\rangle.
 \end{align}
 The closure of the numerical range of an operator can be describes as the intersection of the closed half-planes. Let $T\in\mathcal{B(\mathcal{H})}$. Then
 \begin{align}\label{half plane description of the numerical range}
 	\overline{W(T)}=\bigcap_{\theta\in[0,2\pi)}\{z\in\mathbb{C}:\Re{(e^{i\theta}z)}\leq\lambda_1(\Re{(e^{i\theta}T))}\}.
 \end{align}
 Using this, it can be shown that the numerical range of the Jordan block of size $n$ with all eigen values $0$ is the disc centred at $0$ with radius $\cos\frac{\pi}{n+1}$. As a consequence, it follows that the numerical ranges of both the unilateral and bilateral shifts are equal to the open unit disc. The following lemma provides a useful criterion for determining whether a map on an operator system generated by $T$ is positive in terms of the inclusion of the numerical ranges.

 \begin{lem}[\cite{CL}]\label{equivalent criterion for positivity}
 	Let $T\in\mathcal{B}(\mathcal{H}), X\in\mathcal{B}(\mathcal{K})$. A unital self-adjoint preserving map $\varphi:\mathcal{OS}(T)\rightarrow\mathcal{OS}(X)$ with $\varphi(T)=X$ is positive if and only if $W(X)\subseteq\overline{W(T)}$.
\end{lem}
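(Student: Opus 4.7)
The plan is to reduce the abstract positivity of $\varphi$ to the concrete half-plane description of $\overline{W(T)}$ recorded in (\ref{half plane description of the numerical range}). I would start by parametrizing the self-adjoint part of $\mathcal{OS}(T)$: every self-adjoint element has the form $A = aI + bT + \overline{b}T^* = aI + 2\Re(bT)$ with $a \in \mathbb{R}$ and $b \in \mathbb{C}$, and since $\varphi$ is unital and $\ast$-preserving with $\varphi(T) = X$, one automatically has $\varphi(A) = aI + 2\Re(bX)$. The key observation is that when $b \neq 0$, writing $-b = |b|e^{i\theta}$, the inequality $A \geq 0$ is equivalent to the single scalar inequality $\lambda_1(\Re(e^{i\theta}T)) \leq a/(2|b|)$; the case $b = 0$ is immediate.

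For the forward direction ($\varphi$ positive $\Rightarrow W(X) \subseteq \overline{W(T)}$), I would fix $\theta \in [0, 2\pi)$ and use that $\lambda_1(\Re(e^{i\theta}T))\,I - \Re(e^{i\theta}T)$ is a positive self-adjoint element of $\mathcal{OS}(T)$ by the very definition of $\lambda_1$. Applying $\varphi$ produces $\lambda_1(\Re(e^{i\theta}T))\,I - \Re(e^{i\theta}X) \geq 0$, so for any unit vector $f \in \mathcal{K}$ and $z = \langle Xf, f\rangle \in W(X)$, we obtain $\Re(e^{i\theta}z) = \langle \Re(e^{i\theta}X)f, f\rangle \leq \lambda_1(\Re(e^{i\theta}T))$. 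Since this holds for every $\theta$, the half-plane formula (\ref{half plane description of the numerical range}) gives $z \in \overline{W(T)}$.

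For the reverse direction, I would take a positive element $A = aI + 2\Re(bT) \geq 0$ in $\mathcal{OS}(T)$ with $b \neq 0$ (the case $b = 0$ being trivial), write $-b = |b|e^{i\theta}$, and invert the key observation: positivity of $A$ gives $\lambda_1(\Re(e^{i\theta}T)) \leq a/(2|b|)$. By the hypothesis $W(X) \subseteq \overline{W(T)}$ together with (\ref{half plane description of the numerical range}), every $z \in W(X)$ satisfies $\Re(e^{i\theta}z) \leq a/(2|b|)$, so $\lambda_1(\Re(e^{i\theta}X)) \leq a/(2|b|)$, which rearranges back to $\varphi(A) = aI + 2\Re(bX) \geq 0$.

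The argument is essentially bookkeeping --- the conceptual heart is already encoded in the half-plane formula (\ref{half plane description of the numerical range}). The only mild care needed is with the scaling factor $1/(2|b|)$ and the sign conventions, plus a brief verification that $\varphi$ is well-defined and linear on $\mathcal{OS}(T)$ when $\{I, T, T^*\}$ happens to be linearly dependent; in those degenerate cases (e.g.\ $T$ a scalar multiple of $I$, or $T$ self-adjoint) the conclusion is immediate, so they pose no genuine obstacle.
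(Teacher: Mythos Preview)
Your proposal is correct and follows essentially the same route as the paper: both directions hinge on the half-plane description (\ref{half plane description of the numerical range}), and the positivity of $\varphi$ is tested against elements of the form $\lambda I - \Re(e^{i\theta}T)$. The only differences are cosmetic --- the paper phrases the backward direction via the containment $\overline{W(T)}\subseteq\{z:a+bz+c\overline{z}\geq 0\}$ without isolating $\lambda_1$, and runs the forward direction by contrapositive rather than directly --- but the underlying mechanism is identical to yours.
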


\begin{proof}
	$``\Leftarrow"$. Let $aI+bT+cT^*\geq 0$ where $a,b,c\in\mathbb{C}$. This implies that $W(X)\subseteq\overline{W(T)}\subseteq\{z\in\mathbb{C}: a+bz+c\overline{z}\geq 0\}$. So $aI+bX+cX^*\geq 0$. Hence $\varphi$ is positive.
	
	$``\Rightarrow"$. Let $z\notin\overline{W(T)}$. Then, by Eqn \ref{half plane description of the numerical range}, there exists $\theta\in[0,2\pi)$ such that $\Re{(e^{i\theta}z)}>\lambda_1(\Re{(e^{i\theta}T))})\geq\Re{(e^{i\theta}T)}$. Since $\varphi$ is positive, we have $\Re{(e^{i\theta}z)}>\lambda_1(\Re{(e^{i\theta}X))}$. Therefore, by Eqn \ref{half plane description of the numerical range}, we get $z\notin W(X)$. Hence $W(X)\subseteq\overline{W(T)}$.
\end{proof}

The following lemma is very useful to show a map on an operator system generated by an operator $T$ is $n$-positive. Given $S\subseteq\mathbb{C}, S^\circ$ denote the interior of $S$.

\begin{lem}\label{form of positive element}
	Let $T\in\mathcal{B(\mathcal{H})}$ with $0\in W(T)^\circ$. Every positive element in $\mathbb{M}_n(\mathcal{OS}(T))$ is of the form $A\otimes I+B\otimes T+B^*\otimes T^*$ for some $A\geq 0$ and $B\in\mathbb{M}_n$.
\end{lem}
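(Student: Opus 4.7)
The plan has two parts: first, to confirm that every self-adjoint element of $\mathbb{M}_n(\mathcal{OS}(T))$ admits a unique representation of the shape $A\otimes I + B\otimes T + B^*\otimes T^*$ with $A=A^*$ and $B\in\mathbb{M}_n$; and then to upgrade $A=A^*$ to $A\geq 0$ by pairing the given positive element against simple tensors and exploiting the disk of directions available inside $W(T)$ near $0$.

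For the structural part, I would first confirm that $\{I,T,T^*\}$ is linearly independent. A nontrivial relation $aI+bT+cT^*=0$ combined with its adjoint $\bar c T+\bar b T^*+\bar a I=0$ has coefficient determinant $|b|^2-|c|^2$; if $|b|\neq|c|$, one solves for $T$ as a scalar multiple of $I$, making $W(T)$ a point, and if $|b|=|c|$, a short manipulation (multiply by a suitable phase and add the adjoint relation) reduces the system to $e^{-i\theta/2}T+e^{i\theta/2}T^*\in\mathbb{R}\cdot I$ for some $\theta$, forcing $W(T)$ into an affine line. Both outcomes contradict $0\in W(T)^\circ$, so $\{I,T,T^*\}$ is independent. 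Consequently every element of $\mathbb{M}_n(\mathcal{OS}(T))$ has a unique expression $A_0\otimes I+A_1\otimes T+A_2\otimes T^*$, and comparing with the adjoint forces $A_0=A_0^*$ and $A_2=A_1^*$ for self-adjoint elements.

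Now let $P=A\otimes I+B\otimes T+B^*\otimes T^*\geq 0$ with $A=A^*$. Pairing $P$ against a product vector $v\otimes f$, with $v\in\mathbb{C}^n$ and $f\in\mathcal{H}$ a unit vector, gives
\[
\langle Av,v\rangle+2\Re\bigl(\langle Bv,v\rangle\,\langle Tf,f\rangle\bigr)\geq 0.
\]
Writing $\alpha=\langle Av,v\rangle\in\mathbb{R}$ and $\beta=\langle Bv,v\rangle$, the inequality reads $\alpha+2\Re(\beta z)\geq 0$ for every $z\in W(T)$. If $\beta=0$ we read off $\alpha\geq 0$. If $\beta\neq 0$, the hypothesis $0\in W(T)^\circ$ supplies $\varepsilon>0$ with $\{z\in\mathbb{C}:|z|<\varepsilon\}\subseteq W(T)$, so choosing $z$ near $-\varepsilon\bar\beta/|\beta|$ drives $\Re(\beta z)$ arbitrarily close to $-|\beta|\varepsilon$; passing to the limit forces $\alpha\geq 2|\beta|\varepsilon>0$. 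In either case $\langle Av,v\rangle\geq 0$ for every $v\in\mathbb{C}^n$, so $A\geq 0$.

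The main obstacle is conceptual rather than computational: one must recognize that testing positivity on product vectors couples $A$ and $B$ only through the scalar $\langle Bv,v\rangle\,\langle Tf,f\rangle$, and that the open disk inside $W(T)$ around $0$ provides exactly the freedom needed to sign-balance the cross term. Both the linear-independence step and the disk argument rely crucially on $0\in W(T)^\circ$, so the hypothesis is doing all the real work.
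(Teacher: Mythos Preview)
Your proof is correct and shares its core mechanism with the paper's argument: both pair the positive element against product vectors $v\otimes f$ and exploit the small disk around $0$ inside $W(T)$ to force $\langle Av,v\rangle\geq 0$.

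The one organizational difference worth noting is how $C=B^*$ is obtained. You first prove that $\{I,T,T^*\}$ is linearly independent (a separate argument using $0\in W(T)^\circ$), deduce uniqueness of the representation, and then read off $C=B^*$ from self-adjointness. The paper skips the linear-independence step entirely: starting from an arbitrary representation $A\otimes I+B\otimes T+C\otimes T^*$, it pairs against $v\otimes f$ to get $a_v+b_v z+c_v\bar z\geq 0$ for all $z$ in the disk, and the fact that this expression is real and nonnegative for all $z=re^{i\theta}$ forces both $a_v\geq 0$ and $c_v=\overline{b_v}$ simultaneously; polarization then gives $C=B^*$. So the paper extracts both conclusions from the single positivity inequality, whereas your route separates the work into an algebraic step (independence $\Rightarrow$ $C=B^*$) and an analytic step (disk $\Rightarrow$ $A\geq 0$). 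Your version is slightly longer but makes the role of the hypothesis in securing a well-posed coefficient problem more explicit.
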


\begin{proof}
	Let $A\otimes I+B\otimes T+C\otimes T^*$ be a positive element of $\mathbb{M}_n(\mathcal{OS}(T))$ where $A, B, C\in\mathbb{M}_n$. Suppose $f\in\mathbb{C}^n$ and $g\in\mathcal{H}$ with $\Vert f\Vert=\Vert g\Vert=1$. So we have
	\begin{align*}
		&\left\langle (A\otimes I+B\otimes T+C\otimes T^*)f\otimes g, f\otimes g\right\rangle\geq 0\\
		& \Rightarrow \underbrace{\langle Af,f\rangle}_{a_f}+\underbrace{\langle Bf,f}_{b_f}\rangle\underbrace{\langle Tg,g\rangle}_{z}+\underbrace{\langle Cf,f\rangle}_{c_f}\overline{\langle Tg,g\rangle}\geq 0\\
		&\Rightarrow a_f+b_fz+c_f\overline{z}\geq 0, \text{ for all } z\in W(T)\\
		&\Rightarrow a_f+b_fre^{i\theta}+c_{f}re^{-i\theta}\geq 0, \text{ for some } r>0 \text{ and for all } \theta\in[0,2\pi) \text{ as }0\in W(T)^\circ\\
		&\Rightarrow a_f\geq 0 \text{ and } c_f=\overline{b_f}, \text{ for all } f\in\mathbb{C}^n\\
		&\Rightarrow \langle Af,f\rangle\geq 0, \text{ and } \langle Cf,f\rangle=\overline{\langle Bf,f\rangle}, \text{ for all } f\in\mathbb{C}^n\\
		&\Rightarrow A\geq 0 \text{ and } C=B^*.
	\end{align*}
	This completes the proof.
\end{proof}

\begin{lem}\label{n-positivity}
	Let $T\in\mathcal{B(\mathcal{H})}$ with $0\in W(T)^\circ$ and $X\in\mathcal{B}(\mathcal{H})$. A unital selfadjoint preserving map $\varphi:\mathcal{OS}(T)\rightarrow\mathcal{OS}(X)$ with $\varphi(T)=X$ is $n$-positive if and only if $I_n\otimes I+B\otimes X+B^*\otimes X^*\geq 0$ whenever $I_n\otimes I+B\otimes T+B^*\otimes T^*\geq 0$ where $B\in \mathbb{M}_n$.
\end{lem}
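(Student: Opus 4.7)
The forward direction is immediate: if $\varphi$ is $n$-positive, then $\varphi_n = \id_n\otimes\varphi$ maps $I_n\otimes I + B\otimes T + B^*\otimes T^*$ (whenever positive) to $I_n\otimes I + B\otimes X + B^*\otimes X^*$, which is therefore positive. So the content lies in the converse, and my plan is to use Lemma \ref{form of positive element} to show that the apparently weaker condition (only $A = I_n$) already captures all positive elements of $\mathbb{M}_n(\mathcal{OS}(T))$.

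For the converse, assume the displayed implication holds. To prove $\varphi_n$ is positive, take an arbitrary positive element $P \in \mathbb{M}_n(\mathcal{OS}(T))$. Since $0 \in W(T)^\circ$, Lemma \ref{form of positive element} tells us that $P = A\otimes I + B\otimes T + B^*\otimes T^*$ for some $A \geq 0$ and $B \in \mathbb{M}_n$. The goal is to show $\varphi_n(P) = A\otimes I + B\otimes X + B^*\otimes X^* \geq 0$. I will first handle the case when $A$ is invertible, and then pass to the general case by a perturbation argument.

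When $A > 0$, set $B' := A^{-1/2} B A^{-1/2} \in \mathbb{M}_n$, which is well defined and satisfies $(B')^* = A^{-1/2} B^* A^{-1/2}$. Then I can factor
\begin{equation*}
P = (A^{1/2}\otimes I)\bigl(I_n\otimes I + B'\otimes T + (B')^*\otimes T^*\bigr)(A^{1/2}\otimes I).
\end{equation*}
Since $P \geq 0$ and $A^{1/2}\otimes I$ is invertible, the middle factor is positive. By hypothesis, $I_n\otimes I + B'\otimes X + (B')^*\otimes X^* \geq 0$, and conjugating by $A^{1/2}\otimes I$ yields $\varphi_n(P) \geq 0$.

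For general $A \geq 0$, replace $A$ by $A_\varepsilon := A + \varepsilon I_n$ for $\varepsilon > 0$. Then
\begin{equation*}
A_\varepsilon\otimes I + B\otimes T + B^*\otimes T^* = P + \varepsilon(I_n\otimes I) \geq 0,
\end{equation*}
and $A_\varepsilon$ is invertible, so by the previous case $A_\varepsilon\otimes I + B\otimes X + B^*\otimes X^* \geq 0$. Letting $\varepsilon \downarrow 0$ gives $\varphi_n(P) \geq 0$. Thus $\varphi_n$ is positive, i.e.\ $\varphi$ is $n$-positive. The only subtle point is making sure Lemma \ref{form of positive element} genuinely applies to every positive element of $\mathbb{M}_n(\mathcal{OS}(T))$ (this is where the assumption $0\in W(T)^\circ$ is used), and the perturbation step, which is purely a limiting argument and presents no real obstacle.
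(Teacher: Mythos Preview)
Your proof is correct and follows essentially the same approach as the paper: invoke Lemma~\ref{form of positive element} to write an arbitrary positive element as $A\otimes I+B\otimes T+B^*\otimes T^*$, reduce to $A=I_n$ by conjugating with $A^{-1/2}\otimes I$ when $A>0$, and handle general $A\geq 0$ by the perturbation $A_\varepsilon=A+\varepsilon I_n$ and letting $\varepsilon\to 0$.
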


\begin{proof}
	$``\Leftarrow"$. Let $A\otimes I+B\otimes T+B^*\otimes T^*$ be a positive element in $\mathbb{M}_n(\mathcal{OS}(T))$ where $A\geq 0$ and $B\in\mathbb{M}_n$ (by Lemma \ref{form of positive element}). Suppose $A>0$. Then we have
	\begin{align}\label{strict positivity}
		&A\otimes I+B\otimes T+B^*\otimes T^*\geq 0\nonumber\\
		&\Rightarrow I_n\otimes I+A^{-\frac{1}{2}}BA^{-\frac{1}{2}}\otimes T+(A^{-\frac{1}{2}}BA^{-\frac{1}{2}})^*\otimes T^*\geq 0\nonumber\\
		& \Rightarrow I_n\otimes I+A^{-\frac{1}{2}}BA^{-\frac{1}{2}}\otimes X+(A^{-\frac{1}{2}}BA^{-\frac{1}{2}})^*\otimes X^*\geq 0, 
		\text{ by hypothesis}\nonumber\\
		&\Rightarrow A\otimes I+B\otimes X+B^*\otimes X^*\geq 0.
	\end{align}
	Next, let $A\geq 0$. Suppose $\epsilon>0$ such that $A_\epsilon=A+\epsilon I>0$. So, by Eqn \ref{strict positivity}, we have $A_\epsilon\otimes I+B\otimes X+B^*\otimes X^*\geq 0$. Taking $\epsilon\rightarrow 0$, we get $A\otimes I+B\otimes X+B^*\otimes X^*\geq 0$. Therefore, $\varphi$ is $n$-positive.

	$``\Rightarrow"$. This is straightforward.
\end{proof}

Let $T\in\mathcal{B}(\mathcal{H})$. An operator $S\in\mathcal{B}(\mathcal{K})$ where $\mathcal{K}\supseteq\mathcal{H}$ is said to be a \textit{dilation} of $T$ (or $T$ is said to be a \textit{compression} of $T$) if $T=P_{\mathcal{H}}S|_{\mathcal{H}}$. That is,
\begin{align*}
	S=\begin{pmatrix}
		T & * \\
		* & * \\
	\end{pmatrix}.
\end{align*}
Moreover, if $S$ is unitary then $S$ is called a \textit{unitary dilation} of $T$. Halmos showed that every contraction has a unitary dilation. The following lemma highlights the connection between dilation theory and completely positivity.

\begin{lem}[\cite{CL, VP}]\label{equivalent criterion for CP}
	Let $T\in\mathbb{M}_n$ and $X\in\mathcal{B}(\mathcal{H})$. A unital self-adjoint preserving map $\varphi:\mathcal{OS}(T)\rightarrow\mathcal{OS}(X)$ with $\varphi(T)=X$ is completely positive if and only if $X$ is a compression of $T\otimes I$. 
\end{lem}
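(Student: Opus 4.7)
The plan is to recognize this as a classical Stinespring–Arveson calculation specialized to the fact that $\mathbb{M}_n$ has, up to multiplicity, a unique irreducible representation. I would prove the two implications separately.

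For the forward direction, assume $\varphi$ is UCP. Since $\mathcal{OS}(T)$ sits inside the $C^*$-algebra $\mathbb{M}_n$, Arveson's extension theorem lets me extend $\varphi$ to a UCP map $\widetilde{\varphi}:\mathbb{M}_n\rightarrow\mathcal{B}(\mathcal{H})$. Applying Stinespring's dilation theorem to $\widetilde{\varphi}$ yields a Hilbert space $\mathcal{K}$, a unital $*$-representation $\pi:\mathbb{M}_n\rightarrow\mathcal{B}(\mathcal{K})$, and a bounded $V:\mathcal{H}\rightarrow\mathcal{K}$ such that $\widetilde{\varphi}(A)=V^*\pi(A)V$. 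Unitality forces $V^*V=I_{\mathcal{H}}$, so $V$ is an isometry. By the standard classification of unital $*$-representations of $\mathbb{M}_n$, there is a Hilbert space $\mathcal{L}$ and a unitary $U:\mathcal{K}\rightarrow\mathbb{C}^n\otimes\mathcal{L}$ such that $U\pi(A)U^*=A\otimes I_{\mathcal{L}}$ for every $A\in\mathbb{M}_n$. Replacing $V$ by $UV$, the identity
\begin{align*}
	X=\widetilde{\varphi}(T)=V^*(T\otimes I_{\mathcal{L}})V
\end{align*}
exhibits $X$ as the compression of $T\otimes I_{\mathcal{L}}$ to the isometric copy $V\mathcal{H}$ of $\mathcal{H}$ inside $\mathbb{C}^n\otimes\mathcal{L}$.

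For the reverse direction, suppose $X=V^*(T\otimes I_{\mathcal{L}})V$ for some isometry $V:\mathcal{H}\rightarrow\mathbb{C}^n\otimes\mathcal{L}$. The map $\psi:\mathbb{M}_n\rightarrow\mathcal{B}(\mathcal{H})$ defined by $\psi(A)=V^*(A\otimes I_{\mathcal{L}})V$ is UCP, being the composition of the unital $*$-homomorphism $A\mapsto A\otimes I_{\mathcal{L}}$ with the unital completely positive compression by the isometry $V$. Restricting $\psi$ to $\mathcal{OS}(T)$ gives a UCP map sending $I\mapsto I$ and $T\mapsto X$. Since $\varphi$ is unital and self-adjoint preserving with $\varphi(T)=X$, we have $\varphi(T^*)=X^*=\psi(T^*)$, so $\psi|_{\mathcal{OS}(T)}=\varphi$ on the spanning set $\{I,T,T^*\}$ and hence everywhere. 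Thus $\varphi$ is UCP.

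There is no serious obstacle here; both heavy lifts are supplied by Arveson's extension theorem and Stinespring's theorem, together with the elementary fact that every unital $*$-representation of $\mathbb{M}_n$ is unitarily equivalent to a multiple of the identity representation. The only point requiring a moment's care is verifying that $\psi$ and $\varphi$ agree on $\mathcal{OS}(T)$ in the reverse direction, but this is immediate from the assumptions that both maps are unital and self-adjoint preserving and from $\varphi(T)=X$.
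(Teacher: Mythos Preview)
The paper does not supply its own proof of this lemma; it simply cites \cite{CL, VP} and treats the result as known. Your argument is correct and is the standard one from those references: Arveson extension plus Stinespring plus the uniqueness (up to multiplicity) of irreducible representations of $\mathbb{M}_n$ for the forward direction, and a direct verification for the converse.
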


Let $J_n$ be the Jordan block of size $n$ with all eigen values $0$, that is,
\begin{align*}
	J_n=\begin{pmatrix}
		0 & 1 &  &  \\
		& 0 & \ddots &  \\
		&    & \ddots & 1\\
		&  &  & 0\\
	\end{pmatrix}.
\end{align*}
Arveson \cite{WA} proved that every unital positive map from $\mathcal{OS}(J_2)$ to $\mathcal{B}(\mathcal{H})$ is completely positive. This describes the matricial range of $J_2$ as the set of all matrices whose numerical radius is at most $\frac{1}{2}$. The proof given by Arveson relies on a structure theorem due to Ando \cite{TA}. We include a short proof of the same here.

\begin{thm}[Arveson \cite{WA}]\label{CP for J2}
	Every unital positive map from $\mathcal{OS}(J_2)$ to $\mathcal{B}(\mathcal{H})$ is completely positive.
\end{thm}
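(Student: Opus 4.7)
The plan is to convert the positivity/complete-positivity question into a numerical-radius/dilation question via the two lemmas just proved, and then resolve the resulting dilation by invoking Ando's structure theorem for operators of numerical radius at most one.

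Let $\varphi : \mathcal{OS}(J_2) \to \mathcal{B}(\mathcal{H})$ be unital positive, and set $X := \varphi(J_2)$. Since $\overline{W(J_2)}$ is the closed disc of radius $\cos(\pi/3) = 1/2$ centred at the origin, Lemma \ref{equivalent criterion for positivity} translates the positivity of $\varphi$ into the single scalar condition $w(X) \leq 1/2$. On the other hand, by Lemma \ref{equivalent criterion for CP}, complete positivity of $\varphi$ is equivalent to $X$ being a compression of $J_2 \otimes I_\mathcal{K}$ for some auxiliary Hilbert space $\mathcal{K}$. Realising $J_2 \otimes I_\mathcal{K}$ as the block operator $\begin{pmatrix} 0 & I \\ 0 & 0 \end{pmatrix}$ on $\mathcal{K} \oplus \mathcal{K}$, and writing an isometric embedding $V : \mathcal{H} \to \mathcal{K} \oplus \mathcal{K}$ in block form $V = \binom{V_1}{V_2}$, this dilation condition unpacks to the requirement that there exist $V_1, V_2 : \mathcal{H} \to \mathcal{K}$ satisfying
\begin{align*}
V_1^* V_1 + V_2^* V_2 = I \quad\text{and}\quad V_1^* V_2 = X.
\end{align*}
So the whole theorem collapses to showing that such a factorisation exists whenever $w(X) \leq 1/2$.

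The existence of this factorisation is precisely the content of Ando's structure theorem applied to $2X$: one obtains a Hermitian contraction $A$ with $-I \leq A \leq I$ and a contraction $Z$ such that $2X = (I-A)^{1/2} Z (I+A)^{1/2}$. Choosing $V_1 := \tfrac{1}{\sqrt 2} Z^* (I-A)^{1/2}$ and $V_2 := \tfrac{1}{\sqrt 2} (I+A)^{1/2}$, a direct computation yields $V_1^* V_2 = X$ together with $V_1^* V_1 + V_2^* V_2 \leq I$; padding $V_1$ with the extra block $(I - V_1^* V_1 - V_2^* V_2)^{1/2}$ into an auxiliary copy of $\mathcal{H}$ then enlarges $V$ to a genuine isometry on a bigger $\mathcal{K}$ without disturbing the compression $V_1^* V_2 = X$. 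An application of Lemma \ref{equivalent criterion for CP} concludes that $\varphi$ is completely positive.

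The principal obstacle is Ando's structure theorem itself, which is a deep result about the fine structure of contractions with prescribed numerical radius; once it is available, the remaining steps are a routine manipulation of the dilation data. An appealing alternative, which foreshadows the approach taken in the rest of the paper, is to replace the appeal to Ando by a direct analysis of operators satisfying $BB^* + B^*B = I$: any operator arising as $V^*(J_2 \otimes I)V$ inherits such a relation from $J_2$ (since $J_2 J_2^* + J_2^* J_2 = I$), and conversely the structure theorem for such $B$ recovers the required factorisation.
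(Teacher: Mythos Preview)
Your proof is correct, but it takes a genuinely different route from the paper's. You reduce via Lemma~\ref{equivalent criterion for CP} to a dilation/factorisation problem for $X$ and then invoke Ando's structure theorem (Theorem~\ref{Ando's result}) to produce the required $V_1, V_2$. This is essentially Arveson's original argument, and as you note, the nontrivial work is hidden inside Ando's theorem.

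The paper instead gives a short elementary proof that bypasses Ando entirely. Rather than dilating $X$, it works on the \emph{other} tensor factor: given $B\in\mathbb{M}_n$ with $I_n\otimes I_2 + B\otimes J_2 + B^*\otimes J_2^*\geq 0$ (equivalently $\|B\|\leq 1$), it takes a Halmos unitary dilation $U\in\mathbb{M}_{2n}$ of $B$, observes via the spectral theorem that $I_{2n}\otimes I + U\otimes X + U^*\otimes X^*\geq 0$ follows immediately from the scalar positivity $I+\lambda X+\bar\lambda X^*\geq 0$ for $|\lambda|=1$ (which is just $w(X)\leq\tfrac12$), and compresses back. Lemma~\ref{n-positivity} then finishes.

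What each approach buys: the paper's argument is strictly more elementary---it needs only Halmos dilation and the spectral theorem for unitaries---and it is the template for the paper's later results on $J_3$ (dilate $B$ rather than $X$, then use a structure theorem for the dilation). Your approach makes the connection to the numerical-radius structure of $X$ explicit and is conceptually cleaner as a dilation statement, at the cost of importing a deep theorem. One small caveat on your closing remark: compressions of $J_2\otimes I$ do \emph{not} in general satisfy $XX^*+X^*X=I$ (take $X=0$), so the foreshadowing there is imprecise, though it does not affect your main argument.
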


\begin{proof}
	Let $X\in\mathcal{B}(\mathcal{H})$ and $\varphi:\mathcal{OS}(J_2)\rightarrow \mathcal{B}(\mathcal{H})$ be a unital positive map such that $\varphi(J_2)=X$. Suppose $B\in\mathbb{M}_n$ such that
	\begin{align*}
		I_n\otimes I_2+B\otimes J_2+B^*\otimes J_2^*\geq 0\Leftrightarrow \begin{pmatrix}
			I_n & B\\
			B^* & I_n \\
		\end{pmatrix}\geq 0\Leftrightarrow \Vert B\Vert\leq 1.
	\end{align*}
	Let $U\in\mathbb{M}_{2n}$ be a unitary dilation of $B$. Suppose $\lambda\in\mathbb{C}$ with $\vert\lambda\vert=1$. Then
	\begin{align*}
		I_2+\lambda J_2+\overline{\lambda}J_2^*\geq 0
		&\Rightarrow I+\lambda X+\overline{\lambda}X^*\geq 0, \text{ since } \varphi \text{ is positive}\\
		&\Rightarrow I_{2n}\otimes I+U\otimes X+U^*\otimes X^*\geq 0\\
		&\Rightarrow I_n\otimes I+B\otimes X+B^*\otimes X^*\geq 0.
	\end{align*}
	So, by Lemma \ref{n-positivity}, $\varphi$ is $n$-positive for every $n$ and hence completely positive.
\end{proof}

\begin{cor}[\cite{WA}]
	Let $n\in\mathbb{N}$. Then $W_n(J_2)=\{X\in\mathbb{M}_n: w(X)\leq\frac{1}{2}\}$.
\end{cor}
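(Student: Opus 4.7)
The plan is to combine the preceding Arveson theorem with the positivity criterion from Lemma \ref{equivalent criterion for positivity}, once the numerical range of $J_2$ has been identified explicitly. By the preceding theorem, every unital positive map from $\mathcal{OS}(J_2)$ into $\mathbb{M}_n$ is automatically completely positive, so
\begin{align*}
W_n(J_2) = \{\varphi(J_2): \varphi:\mathcal{OS}(J_2)\to\mathbb{M}_n \text{ is unital and positive}\}.
\end{align*}
Thus the question reduces to determining which $X\in\mathbb{M}_n$ arise as $\varphi(J_2)$ for some unital positive $\varphi$.

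Next I would compute $\overline{W(J_2)}$. The text already notes that the numerical range of the Jordan block $J_n$ of size $n$ is the open disc centred at $0$ of radius $\cos(\pi/(n+1))$, so for $n=2$ one obtains $\overline{W(J_2)}=\{z\in\mathbb{C}:|z|\leq \tfrac{1}{2}\}$. Consequently $W(X)\subseteq\overline{W(J_2)}$ is equivalent to $w(X)\leq \tfrac{1}{2}$.

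For the inclusion $W_n(J_2)\subseteq\{X:w(X)\leq\tfrac{1}{2}\}$, given any $\varphi\in W_n(J_2)$-witness, $\varphi$ is in particular positive and self-adjoint preserving, so Lemma \ref{equivalent criterion for positivity} yields $W(\varphi(J_2))\subseteq\overline{W(J_2)}$, hence $w(\varphi(J_2))\leq\tfrac{1}{2}$. For the reverse inclusion, given $X\in\mathbb{M}_n$ with $w(X)\leq \tfrac{1}{2}$, I would define
\begin{align*}
\varphi:\mathcal{OS}(J_2)\to\mathbb{M}_n, \qquad \varphi(aI+bJ_2+cJ_2^*) := aI_n + bX + cX^*.
\end{align*}
A quick check that $\{I_2,J_2,J_2^*\}$ is linearly independent in $\mathbb{M}_2$ shows this is well-defined; it is plainly unital and self-adjoint preserving, and by Lemma \ref{equivalent criterion for positivity} combined with $w(X)\leq\tfrac{1}{2}$ it is positive. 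Invoking Theorem \ref{CP for J2} (Arveson), $\varphi$ is UCP, so $X=\varphi(J_2)\in W_n(J_2)$.

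Since all the substantive work has been carried out in Theorem \ref{CP for J2} and Lemma \ref{equivalent criterion for positivity}, I do not expect a genuine obstacle here; the only point requiring care is to verify that the candidate map $\varphi$ is well-defined, which boils down to linear independence of $\{I_2,J_2,J_2^*\}$.
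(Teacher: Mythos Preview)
Your proposal is correct and follows essentially the same approach as the paper: the paper's proof simply cites Theorem \ref{CP for J2}, Lemma \ref{equivalent criterion for positivity}, and the fact that $W(J_2)=\{z\in\mathbb{C}:|z|\leq\tfrac{1}{2}\}$, which is exactly the combination you invoke. The only minor slip is calling $W(J_2)$ an \emph{open} disc---for finite matrices the numerical range is compact, so $W(J_2)$ is already the closed disc---but this is harmless since you work with $\overline{W(J_2)}$ anyway.
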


\begin{proof}
	The proof follows from Theorem \ref{CP for J2}, Lemma \ref{equivalent criterion for positivity} and the fact that $W(J_2)=\{z\in\mathbb{C}:\vert z\vert\leq \frac{1}{2}\}$.
\end{proof}

Tso-Wu \cite{TW} generalize the above result of Arveson for quadratic operators. Let $T$ be a quadratic operator. Tso-Wu \cite{TW} showed that every unital positive map from $\mathcal{OS}(T)$ to $\mathcal{B}(\mathcal{H})$ is completely positive. This descibes the matricial range of a quadratic operator as follows.

\begin{thm}[Tso-Wu \cite{TW}]
	Let $T$ be a quadratic operator and $n\in\mathbb{N}$. Then 
	\begin{align*}
		W_n(T)=\{X\in\mathbb{M}_n: W(X)\subseteq\overline{W(T)}\}.
	\end{align*}
\end{thm}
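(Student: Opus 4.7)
The containment $W_n(T) \subseteq \{X \in \mathbb{M}_n : W(X) \subseteq \overline{W(T)}\}$ is essentially free: every UCP map is positive, so Lemma \ref{equivalent criterion for positivity} applied to $\varphi(T)$ gives $W(\varphi(T)) \subseteq \overline{W(T)}$. For the reverse containment, given $X \in \mathbb{M}_n$ with $W(X) \subseteq \overline{W(T)}$, the formula $\varphi(aI+bT+cT^*) = aI+bX+cX^*$ defines (since $I, T, T^*$ are linearly independent for any non-scalar $T$; the scalar case is trivial) a unital self-adjoint preserving map $\mathcal{OS}(T)\to\mathbb{M}_n$, which is positive by Lemma \ref{equivalent criterion for positivity}. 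So the theorem reduces to the key step: every unital positive map on $\mathcal{OS}(T)$ is completely positive when $T$ is quadratic.

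For the key step, my plan is to exploit the rigid structure of quadratic operators. Schur-triangulating $T$ with respect to its eigenspace decomposition and applying a singular value decomposition to the resulting off-diagonal block yields the unitary equivalence
\[
T \cong \lambda_1 I_{\mathcal{K}_1} \oplus \lambda_2 I_{\mathcal{K}_2} \oplus \bigoplus_{\alpha} T_\alpha, \quad T_\alpha = \begin{pmatrix} \lambda_1 & \sigma_\alpha \\ 0 & \lambda_2 \end{pmatrix}, \; \sigma_\alpha > 0,
\]
where $\lambda_1, \lambda_2$ are the (possibly coincident) roots of the minimal polynomial of $T$. By the classical elliptical range theorem, each $W(T_\alpha)$ is a closed elliptical disk with foci $\lambda_1, \lambda_2$ and minor axis $\sigma_\alpha$; being confocal, these ellipses are nested. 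Setting $\sigma_0 := \sup_\alpha \sigma_\alpha$ and $T_0 := \begin{pmatrix} \lambda_1 & \sigma_0 \\ 0 & \lambda_2 \end{pmatrix}$, one obtains $\overline{W(T)} = \overline{W(T_0)}$, and $T_0$ is (a limit of) compressions of $T$.

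The induced map $\psi: \mathcal{OS}(T_0) \to \mathbb{M}_n$ with $\psi(T_0) = X$ is then unital, self-adjoint preserving, and positive (by Lemma \ref{equivalent criterion for positivity} and $\overline{W(T)} = \overline{W(T_0)}$). I would prove $\psi$ is CP by a dilation argument parallel to the proof of Theorem \ref{CP for J2}: Lemma \ref{n-positivity} reduces the task to verifying $I_n \otimes I + B \otimes X + B^* \otimes X^* \geq 0$ whenever the analogous inequality holds for $T_0$; one then decodes the $T_0$-inequality into a contraction-type condition on $B$ and invokes a unitary or normal dilation of $B$ (Halmos' dilation for the disk $\overline{W(J_2)}$, Ando's simultaneous dilation theorem in the general elliptical case) to certify the corresponding inequality for $X$. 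Once $\psi$ is CP, Lemma \ref{equivalent criterion for CP} realizes $X$ as a compression of $T_0 \otimes I$, and hence (via $T_0$ being a compression of $T$) of $T \otimes I$, whence the original map $\varphi$ is CP.

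The main obstacle is the $2\times 2$ dilation step when $\overline{W(T_0)}$ is a proper ellipse rather than a disk. Arveson's original argument for $J_2$ works cleanly because $\overline{W(J_2)}$ is a round disk, reducing the required positivity to $\|B\|\leq 1$ and hence to the unitary dilation of a contraction. For a non-circular ellipse the positivity of $I_n \otimes I_2 + B \otimes T_0 + B^* \otimes T_0^*$ is a coupled two-parameter condition that cannot be diagonalized by a single scalar move, and one needs a simultaneous dilation of two coupled contractions in the sense of Ando. The structural reduction from general quadratic $T$ to the single $2\times 2$ block $T_0$ is, by contrast, routine once the block decomposition above is in hand.
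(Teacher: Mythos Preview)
The paper does not prove this theorem; it is stated and attributed to Tso--Wu \cite{TW} without argument. There is thus no paper proof to compare against, and your proposal has to stand on its own.

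Your structural reduction is sound. The block decomposition of a quadratic operator into scalar summands and $2\times 2$ upper-triangular blocks $T_\alpha$ is standard, the confocal ellipses $W(T_\alpha)$ are nested, and the factorization $\varphi = \psi \circ \chi$ with $\chi: \mathcal{OS}(T) \to \mathcal{OS}(T_0)$ a (pointwise limit of) compression map(s)---hence completely positive---legitimately reduces everything to a single $2\times 2$ generator $T_0$.

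The gap is exactly where you locate it, but the tool you name for the elliptical $2\times 2$ case is the wrong one. ``Ando's simultaneous dilation theorem'' concerns a pair of \emph{commuting} contractions and produces commuting unitary dilations; the condition $I_n \otimes I_2 + B \otimes T_0 + B^* \otimes T_0^* \geq 0$ for a proper-ellipse $T_0$ does not encode any commuting pair in an evident way, and no route from that theorem to the desired inequality for $X$ is indicated. A much simpler observation bypasses the obstacle entirely: the operator system $\mathcal{OS}(T_0)$ depends only on $\mathrm{span}\{I,T_0,T_0^*\}$, not on the chosen generator. For any non-normal $T_0\in\mathbb{M}_2$, after centering to $T_0=\begin{pmatrix}\mu & r\\ 0 & -\mu\end{pmatrix}$ one can solve $(a\mu+b\bar\mu)^2+abr^2=0$ with $|a|\neq|b|$, so that $S_0:=aT_0+bT_0^*$ is a nonzero nilpotent $2\times 2$ matrix---unitarily a scalar multiple of $J_2$. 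Then $\mathcal{OS}(T_0)=\mathcal{OS}(S_0)$ as operator systems, and Theorem~\ref{CP for J2} applied to $S_0$ gives ``positive $\Rightarrow$ CP'' on $\mathcal{OS}(T_0)$ directly. Thus the ellipse case collapses to the disk case and no separate Ando-type dilation machinery is required; the sentence invoking it should be replaced.
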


Choi-Li \cite{CL} furthur generalizes the above result of Arveson for $3\times 3$ matrices which have a reducing subspace. Most precisely, they proved the following.

\begin{thm}[Choi-Li \cite{CL}]\label{Choi-Li}
	Suppose $T\in\mathbb{M}_3$ has a reducing subspace. Then every unital positive map from $\mathcal{OS}(T)$ to $\mathcal{B}(\mathcal{H})$ is completely positive.
\end{thm}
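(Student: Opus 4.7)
The plan is to reduce to the form $T=\lambda\oplus T_2$ with $T_2\in\mathbb{M}_2$ quadratic via the reducing subspace hypothesis, apply Tso--Wu to $T_2$, and glue the two pieces together using the constrained unitary dilation theorem advertised in the introduction. Up to unitary equivalence the reducing subspace decomposes $T=\lambda\oplus T_2$ with $T_2\in\mathbb{M}_2$. If $T_2$ is scalar, then $T$ is normal and the statement is immediate from Arveson's description of the matricial range of a normal operator. Otherwise $T_2$ is non-scalar and hence quadratic by Cayley--Hamilton, so Tso--Wu applies on $\mathcal{OS}(T_2)$. By property (P3), an invertible affine transformation lets us normalize $T_2$ (and simultaneously move $\lambda$). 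Let $\varphi:\mathcal{OS}(T)\to\mathcal{B}(\mathcal{H})$ be unital positive with $\varphi(T)=X$; Lemma \ref{equivalent criterion for positivity} yields $\overline{W(X)}\subseteq\overline{W(T)}=\mathrm{conv}(\{\lambda\}\cup\overline{W(T_2)})$.

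By Lemma \ref{equivalent criterion for CP} the goal reduces to realizing $X$ as a compression of $T\otimes I_N$ for some $N$. Via Lemma \ref{n-positivity}, $n$-positivity of $\varphi$ is equivalent to: for every $B\in\mathbb{M}_n$, positivity of $I_n\otimes I+B\otimes T+B^*\otimes T^*$ implies the same with $T$ replaced by $X$. Under the direct sum $T=\lambda\oplus T_2$, the hypothesis splits into a scalar condition coming from $\lambda$ together with $I_n\otimes I_2+B\otimes T_2+B^*\otimes T_2^*\geq 0$; the plan is to verify the conclusion by dilating $X$ to a direct sum $\lambda I\oplus X_2$ with $\overline{W(X_2)}\subseteq\overline{W(T_2)}$ and then applying Tso--Wu on the second summand.

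The dilation step is where the real work lies. The boundary of $\overline{W(T)}$ consists of the ellipse $\partial\overline{W(T_2)}$ together with, when $\lambda\notin\overline{W(T_2)}$, two tangent segments meeting at $\lambda$. For each supporting line of $\overline{W(T)}$ with outward normal $e^{i\theta}$ one has $\Re(e^{i\theta}T)\leq\mu_\theta I$, and positivity of $\varphi$ passes this bound to $\Re(e^{i\theta}X)\leq\mu_\theta I$. The constrained unitary dilation theorem stated in the introduction provides, for a single $\theta$, a unitary dilation of $X$ obeying the same half-plane bound. The hard part is to iterate (or apply the theorem jointly) across all supporting directions so that the resulting dilation $\widetilde X$ simultaneously satisfies every half-plane constraint of $\overline{W(T)}$; the presence of the isolated point $\lambda$ outside $\overline{W(T_2)}$ is precisely what forces the two extra tangent constraints beyond those defining $\overline{W(T_2)}$. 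Once built, $\widetilde X$ is unitarily equivalent to a block $\lambda I\oplus X_2$ with $\overline{W(X_2)}\subseteq\overline{W(T_2)}$.

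Tso--Wu applied to the unital selfadjoint-preserving map $\mathcal{OS}(T_2)\to\mathcal{OS}(X_2)$ sending $T_2\mapsto X_2$ (positive by the numerical range containment) yields complete positivity, so $X_2$ is a compression of $T_2\otimes I$ by Lemma \ref{equivalent criterion for CP}. Hence $\widetilde X=\lambda I\oplus X_2$ is a compression of $(\lambda I)\oplus(T_2\otimes I)=T\otimes I_N$; pulling $X$ out of $\widetilde X$ gives the desired compression and, by Lemma \ref{equivalent criterion for CP} once more, $\varphi$ is completely positive. To reiterate, the main obstacle is the simultaneous constrained unitary dilation across all supporting directions of $\overline{W(T)}$: the one-direction constrained dilation is classical, but the joint multi-direction dilation needed to produce the block structure $\lambda I\oplus X_2$ is exactly the nontrivial ingredient invoked by Choi--Li.
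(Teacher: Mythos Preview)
The paper does not supply its own proof of this theorem; it is quoted from Choi--Li \cite{CL}, with the introduction indicating only that the proof rests on the constrained unitary dilation result: a contraction $B$ with $B+B^*\le\mu I$ admits a unitary dilation $U\in\mathcal{B}(\mathcal{H}\oplus\mathcal{H})$ with $U+U^*\le\mu I$. So the comparison is against that outline together with the closely analogous proof the paper does give for Theorem~\ref{CP for J2}.

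Your proposal misapplies the constrained dilation to the wrong object. You attempt to dilate $X$ to a block $\lambda I\oplus X_2$ by a ``joint multi-direction'' constrained unitary dilation, and you correctly sense that this step is unclear. In fact it does not work as stated: the constrained dilation theorem produces a \emph{unitary} dilation of a \emph{contraction}, but $X$ need not be a contraction, and even when it is, a unitary $\widetilde X$ cannot be unitarily equivalent to $\lambda I\oplus X_2$ with $W(X_2)\subseteq W(T_2)$ unless the spectrum of $\widetilde X$ lies in $\{\lambda\}\cup\partial W(T_2)$, which is generically false. The difficulty you flag (simultaneous constraints across all supporting directions) is an artifact of this wrong orientation, not the genuine obstacle.

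The intended argument, parallel to the paper's proof of Theorem~\ref{CP for J2}, dilates the \emph{test matrix} $B$. After an affine change one may take $T=\lambda\oplus J_2$. For $B\in\mathbb{M}_n$, the hypothesis $I_n\otimes I_3+B\otimes T+B^*\otimes T^*\ge 0$ unpacks as $\|B\|\le 1$ together with $I_n+\lambda B+\bar\lambda B^*\ge 0$. The constrained unitary dilation (a \emph{single} linear constraint) yields a unitary dilation $U$ of $B$ with $I+\lambda U+\bar\lambda U^*\ge 0$. Every eigenvalue $\mu$ of $U$ then satisfies $|\mu|=1$ and $1+2\Re(\lambda\mu)\ge 0$, hence $I_3+\mu T+\bar\mu T^*\ge 0$; positivity of $\varphi$ gives $I+\mu X+\bar\mu X^*\ge 0$, and integrating against the spectral measure of $U$ yields $I+U\otimes X+U^*\otimes X^*\ge 0$, whence $I_n\otimes I+B\otimes X+B^*\otimes X^*\ge 0$. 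Lemma~\ref{n-positivity} finishes. No Tso--Wu and no multi-constraint dilation are needed.
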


 However, this result does not hold in general for $3 \times 3$ matrices that do not have any reducing subspace, or for $4 \times 4$ normal matrices as discussed below.

\begin{eg}\label{positive but not CP for J3}
	Let $X=\begin{pmatrix}
		0 & 1 \\
		\frac{1}{3} & 0 \\
	\end{pmatrix}$. Consider the unital self-adjoint preserving map $\varphi:\mathcal{OS}(J_3)\rightarrow\mathbb{M}_2$ such that $\varphi(J_3)=X$. Compute $W(X)=\{(x,y)\in\mathbb{R}^2: \frac{9x^2}{4}+9y^2=1\}$. Since $W(X)\subseteq W(J_3)=\{z\in\mathbb{C}:\vert z\vert\leq\frac{1}{\sqrt{2}}\}$, by Lemma \ref{equivalent criterion for positivity}, $\varphi$ is positive. But $\varphi$ is not completely positive. Because, take $$B=\begin{pmatrix}
	0 & 0.97\\
	0.22 & 0 \\
	\end{pmatrix}$$ and check that $I_2\otimes I_3+B\otimes J_3+B^*\otimes J_3^*\geq 0$ but $I_2\otimes I_3+B\otimes X+B^*\otimes X^*\ngeq 0$.
\end{eg}

It is noteworthy that the map $\varphi$ in Example \ref{positive but not CP for J3} is not contractive also since $\Vert 0.97X+0.22X^*\Vert=1.04>0.99=\Vert 0.97J_3+0.22J_3^*\Vert$. A natural question that arises at this stage is the following.

\begin{que}\label{question 1}\label{open question 1}
	Is every unital contractive map from $\mathcal{OS}(J_3)$ to $\mathcal{B}(\mathcal{H})$ completely positive?
\end{que}

Next, we present an example that Theorem \ref{Choi-Li} does not hold in general for $4\times 4$ normal matrices. To discuss this, we recall the following result of Choi-Li \cite{CL}.

\begin{lem}[\cite{CL}]\label{observation}
	$J_2$ has no normal dilation $N$ satisfying $-\frac{1}{2}\leq\Re(N)\leq\frac{1}{2}$.
\end{lem}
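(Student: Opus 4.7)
The plan is a proof by contradiction. Suppose $N \in \mathcal{B}(\mathcal{K})$ is a normal dilation of $J_2$, with $\mathcal{H} \cong \mathbb{C}^2 \subseteq \mathcal{K}$ the subspace on which $J_2$ acts, and $-\tfrac{1}{2}I \leq \Re(N) \leq \tfrac{1}{2}I$. Decompose $N = H + iK$ with $H := \Re(N)$ and $K := \Im(N)$ self-adjoint; normality of $N$ is equivalent to $HK = KH$, and the hypothesis becomes $\|H\| \leq \tfrac{1}{2}$. Taking real parts of the compression identity $P_\mathcal{H} N|_\mathcal{H} = J_2$ yields
\[
P_\mathcal{H} H|_\mathcal{H} = \Re(J_2) = \tfrac{1}{2}\begin{pmatrix}0 & 1 \\ 1 & 0\end{pmatrix},
\]
whose eigenvectors are $v_\pm := \tfrac{1}{\sqrt{2}}(e_1 \pm e_2)$ with eigenvalues $\pm\tfrac{1}{2}$.

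The key step will be a rigidity observation. Since $v_\pm$ are unit vectors and $\langle H v_\pm, v_\pm\rangle = \pm\tfrac{1}{2}$, the nonnegative operators $\tfrac{1}{2}I \mp H$ (which are $\geq 0$ by hypothesis) satisfy $\langle (\tfrac{1}{2}I \mp H)v_\pm, v_\pm\rangle = 0$, and the standard fact that $\langle Tv, v\rangle = 0$ implies $Tv = 0$ for a positive operator $T$ then forces $H v_\pm = \pm\tfrac{1}{2} v_\pm$ as an identity on all of $\mathcal{K}$. Consequently $v_+$ and $v_-$ sit in orthogonal spectral subspaces of $H$, and because $K$ commutes with $H$, each of these subspaces is $K$-invariant.

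The contradiction then drops out by evaluating $\langle K v_+, v_-\rangle$ two different ways. On the one hand, $K$-invariance places $Kv_+$ in the $+\tfrac{1}{2}$-eigenspace of $H$ and $v_-$ in the $-\tfrac{1}{2}$-eigenspace, so the inner product vanishes. On the other hand, writing $K = -iN + iH$ and using $v_+ \perp v_-$ gives $\langle Kv_+, v_-\rangle = -i\langle Nv_+, v_-\rangle$; since $v_\pm \in \mathcal{H}$,
\[
\langle Nv_+, v_-\rangle = \langle P_\mathcal{H} Nv_+, v_-\rangle = \langle J_2 v_+, v_-\rangle = \tfrac{1}{2},
\]
where the last equality uses $J_2 v_+ = \tfrac{1}{2}(v_+ + v_-)$. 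Thus $\langle Kv_+, v_-\rangle = -i/2 \neq 0$, the desired contradiction.

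The only subtle step I expect is the rigidity argument promoting the compression-level equality $\langle Hv_\pm, v_\pm\rangle = \pm\tfrac{1}{2}$ to the global eigenvector identity $Hv_\pm = \pm\tfrac{1}{2}v_\pm$ on $\mathcal{K}$; once that is in hand, the normality of $N$ together with the compression relation close the argument essentially by bookkeeping.
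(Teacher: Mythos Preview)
Your argument is correct. The rigidity step is sound: since $\tfrac{1}{2}I \mp H \geq 0$ and $\langle(\tfrac{1}{2}I \mp H)v_\pm, v_\pm\rangle = 0$, the square-root trick gives $(\tfrac{1}{2}I \mp H)v_\pm = 0$, so $v_\pm$ are genuine eigenvectors of $H$ on the full space $\mathcal{K}$. From there, $HK = KH$ forces $Kv_+$ into the $+\tfrac{1}{2}$-eigenspace of $H$, orthogonal to $v_-$, while the direct computation $\langle Kv_+, v_-\rangle = -i\langle J_2 v_+, v_-\rangle = -i/2$ supplies the contradiction. All the bookkeeping checks out.

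As for comparison: the paper does not prove this lemma at all; it is simply quoted from Choi--Li \cite{CL} and used as a black box in Example~\ref{positive but not CP for 4-by-4 normal}. Your self-contained argument is therefore a genuine addition rather than a rederivation of something already present in the text.
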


\begin{eg}\label{positive but not CP for 4-by-4 normal}
	Let $T=\text{diag}(\lambda_1,\hdots,\lambda_4)$ where $\lambda_j$'s are four distinct points with $\Re(\lambda_j)=\frac{1}{2}$ for $j=1,2$, $\Re(\lambda_j)=-\frac{1}{2}$ for $j=3,4$ and $\vert\lambda_j\vert=r\geq\frac{1}{2}$ for all $j$. Consider the unital selfadjoint preserving map $\varphi:\mathcal{OS}(T)\rightarrow\mathcal{OS}(J_2)$ with $\varphi(T)=J_2$. Indeed, by Lemma \ref{equivalent criterion for positivity}, $\varphi$ is positive since $W(J_2)=\{z\in\mathbb{C}:\vert z\vert\leq\frac{1}{2}\}\subseteq W(T)$. But $\varphi$ is not completely positive. Because if so, by Lemma \ref{equivalent criterion for CP}, $J_2$ has a normal dilation $N=T\otimes I$ satisfying $-\frac{1}{2}\leq\Re(N)\leq\frac{1}{2}$. This contradicts Lemma \ref{observation}. So $\varphi$ is positive but not completely positive.
\end{eg}

\begin{rmk}
	It is noteworthy that the map $\varphi$ in Example \ref{positive but not CP for 4-by-4 normal} is not contractive also. To see this, we compute 
	\begin{align*}
		\Vert I_2+\alpha J_2+\beta J_2^*\Vert^2&=1+\frac{\alpha^2+\beta^2}{2}+\sqrt{\frac{(\alpha^2-\beta^2)^2}{4}+(\alpha+\beta)^2},\\
		\Vert I_4+\alpha T+\beta T^*\Vert^2&=1+\frac{\alpha^2+\beta^2}{4}+\alpha+\beta+\frac{\alpha\beta}{2}+v^2(\alpha-\beta)^2
	\end{align*}
	where $\alpha,\beta>0$ and $v^2=r^2-\frac{1}{4}$. Let $m\in\mathbb{N}$ such that $v^2\leq 10^m$. Now it is sufficient to find $\alpha,\beta>0$ such that 
	\begin{align*}
		\text{LHS}=\alpha+\beta+\frac{\alpha\beta}{2}+10^m(\alpha-\beta)^2<\frac{\alpha^2+\beta^2}{4}+\sqrt{\frac{(\alpha^2-\beta^2)^2}{4}+(\alpha+\beta)^2}=\text{RHS}.
	\end{align*}
	Choose $\alpha=5\times10^{2m}$ and $\beta=(5\times10^m)-2$ and check that LHS$<$RHS. Hence $\varphi$ is not contractive.
\end{rmk}

A natural question that arises at this point is the following:

\begin{que}\label{question 2}
	Let $T\in\mathbb{M}_4$ be normal. Is every unital contractive map from $\mathcal{OS}(T)$ to $\mathcal{B}(\mathcal{H})$ completely positive?
\end{que}

We aim to explore Questions \ref{question 1}, \ref{question 2} in greater details in the subsequent sections.

\section{Matricial Range}\label{matricial range}

In this section, we, being motivated by Eqn \ref{half plane description of the numerical range}, describe the matricial range of an operator as intersection of closed half-planes. Let us start with the description of the matricial range of a self-adjoint operator.

\begin{thm}\label{matricial range of self-adjoint operators}
	Let $T\in\mathcal{B}(\mathcal{H})$ be self-adjoint and $n\in\mathbb{N}$. Then
	\begin{align*}
		W_n(T)=\left\{X\in\mathbb{M}_n: W(X)\subseteq\overline{W(T)}\right\}.
	\end{align*}
\end{thm}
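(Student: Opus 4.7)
The plan is to reduce the theorem directly to Arveson's description of the matricial range of a normal operator, which is recalled in the introduction. Since $T$ is a bounded self-adjoint operator, $\overline{W(T)}$ is the real interval $[m,M]$ where $m=\min\sigma(T)$ and $M=\max\sigma(T)$, and both extrema are attained points of $\sigma(T)$. With this description of $\overline{W(T)}$ in hand, both inclusions become elementary.

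For the forward inclusion $W_n(T)\subseteq\{X\in\mathbb{M}_n:W(X)\subseteq\overline{W(T)}\}$, suppose $X=\varphi(T)$ for some UCP map $\varphi:\mathcal{OS}(T)\rightarrow\mathbb{M}_n$. Then $\varphi$ is in particular unital, self-adjoint preserving, and positive, so Lemma \ref{equivalent criterion for positivity} immediately yields $W(X)\subseteq\overline{W(T)}$.

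For the reverse inclusion, let $X\in\mathbb{M}_n$ satisfy $W(X)\subseteq[m,M]$. Since this interval is real, $\langle Xf,f\rangle\in\mathbb{R}$ for every unit vector $f$, forcing $X=X^*$; consequently $mI_n\leq X\leq MI_n$. If $m=M$, then $T$ is a scalar multiple of the identity and $X=mI_n$, so the claim is trivial. Otherwise, set
$$H_1:=\frac{MI_n-X}{M-m},\qquad H_2:=\frac{X-mI_n}{M-m}.$$
Both $H_1,H_2\geq 0$, they satisfy $H_1+H_2=I_n$, and $X=mH_1+MH_2$. Since $m,M\in\sigma(T)$, Arveson's description of $W_n(T)$ for normal operators places $X$ in $W_n(T)$ without needing any closure.

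I do not foresee a genuine obstacle here: the theorem is essentially a corollary of Arveson's known description of the matricial range of a normal operator, combined with the one-variable observation that the convex hull of $\sigma(T)$ for a bounded self-adjoint $T$ is the interval $[m,M]$ with attained endpoints. The only small technical point is verifying that $W(X)\subseteq\mathbb{R}$ forces $X=X^*$, which is routine from the definition of the numerical range.
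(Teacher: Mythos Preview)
Your proof is correct, but it takes a different route from the paper for the reverse inclusion. The paper argues as follows: given $W(X)\subseteq\overline{W(T)}$, Lemma~\ref{equivalent criterion for positivity} says the unital self-adjoint preserving map $\psi:\mathcal{OS}(T)\to\mathcal{OS}(X)$ with $\psi(T)=X$ is positive; since $T$ is self-adjoint, $\mathcal{OS}(T)=\operatorname{span}\{I,T\}$ sits inside a commutative $C^*$-algebra, so any positive map out of it is automatically completely positive, and hence $X\in W_n(T)$. Your argument instead invokes Arveson's explicit description of $W_n(T)$ for normal $T$ as the $C^*$-convex hull of $\sigma(T)$, and exhibits the concrete convex combination $X=mH_1+MH_2$ with $m,M\in\sigma(T)$.

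Both arguments are short and valid. The paper's approach has the virtue of being self-contained within the lemmas already developed in the paper and generalises cleanly to the subsequent corollary on operators with degenerate numerical range. Your approach is more constructive (it actually produces the Choi--Kraus data witnessing $X\in W_n(T)$), but it leans on Arveson's normal-operator theorem as a black box, which is a heavier external input than the paper needs here.
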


\begin{proof}
	Let $X\in W_n(T)$. Then there exists a UCP map $\varphi:\mathcal{OS}(T)\rightarrow\mathcal{OS}(X)$ such that $X=\varphi(T)$. In particular, $\varphi$ is positive and hence, by Lemma \ref{equivalent criterion for positivity}, $W(X)\subseteq\overline{W(T)}$. Conversely, let $X\in\mathbb{M}_n$ with $W(X)\subseteq\overline{W(T)}$. This implies, by Lemma \ref{equivalent criterion for positivity}, that the unital self-adjoint preserving map $\psi:\mathcal{OS}(T)\rightarrow\mathcal{OS}(X)$ with $\psi(T)=X$ is positive. Check that $X$ is self-adjoint as $T$ is self-adjoint. Therefore, $\psi$ is completely positive and hence $X\in W_n(T)$.
\end{proof}

\begin{cor}
	Let $T\in\mathcal{B}(\mathcal{H})$ with $W(T)^\circ=\emptyset$ and $n\in\mathbb{N}$. Then
	\begin{align*}
		W_n(T)=\left\{X\in\mathbb{M}_n: W(X)\subseteq\overline{W(T)}\right\}.
	\end{align*}
\end{cor}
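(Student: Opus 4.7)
The plan is to reduce to the self-adjoint case already handled in Theorem \ref{matricial range of self-adjoint operators} via an affine change of variables, invoking property (P3) of the matricial range.

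First I would note that $\overline{W(T)}$ is a nonempty closed convex subset of $\mathbb{C}\cong\mathbb{R}^2$ with empty interior, so it is either a single point or a (possibly unbounded) line segment. If $\overline{W(T)}=\{\lambda\}$, then $\langle(T-\lambda I)x,x\rangle=0$ for every unit vector $x$, which over a complex Hilbert space forces $T=\lambda I$; in this case $\mathcal{OS}(T)=\mathbb{C}I$, and both sides of the claimed identity collapse to $\{\lambda I_n\}$, giving the result trivially.

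In the remaining case, $\overline{W(T)}$ is contained in a unique real affine line in $\mathbb{C}$, so I can pick $a\in\mathbb{C}$ and $\theta\in[0,2\pi)$ such that the affine map $z\mapsto e^{i\theta}(z-a)$ sends that line onto $\mathbb{R}$. Setting $S:=e^{i\theta}(T-aI)$, the affine invariance of the numerical range gives $W(S)=e^{i\theta}(W(T)-a)\subseteq\mathbb{R}$, so $S$ is self-adjoint. Theorem \ref{matricial range of self-adjoint operators} then yields
\begin{align*}
W_n(S)=\{Y\in\mathbb{M}_n:W(Y)\subseteq\overline{W(S)}\}.
\end{align*}

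Finally, I would transport this description back to $T$. Property (P3) gives $W_n(T)=e^{-i\theta}W_n(S)+a$, and the same affine invariance of $W(\cdot)$ shows that for $X\in\mathbb{M}_n$,
\begin{align*}
W(X)\subseteq\overline{W(T)}\iff W(e^{i\theta}(X-aI))\subseteq\overline{W(S)}.
\end{align*}
Combining these two equivalences produces the desired characterization $W_n(T)=\{X\in\mathbb{M}_n:W(X)\subseteq\overline{W(T)}\}$. There is no real obstacle here beyond the bookkeeping of the affine reduction; the substantive content is entirely packaged in Theorem \ref{matricial range of self-adjoint operators}, which in turn rests on Lemma \ref{equivalent criterion for positivity} together with the observation that positive maps between operator systems generated by self-adjoint elements are automatically completely positive.
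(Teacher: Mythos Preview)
Your proposal is correct and follows essentially the same route as the paper: both reduce to Theorem \ref{matricial range of self-adjoint operators} by noting that $\overline{W(T)}$ lies on a real line, choosing an affine map $T\mapsto \alpha T+\beta I$ (your $e^{i\theta}(T-aI)$) that makes the image self-adjoint, and then transporting back via property (P3). The only cosmetic difference is that you treat the degenerate single-point case separately, whereas the paper absorbs it into the same affine reduction.
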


\noindent\textit{Proof.}
	Indeed, $W(T)$ is a straight line (could be degenerate also). So there exist $\alpha,\beta\in\mathbb{C}$ such that $\alpha T+\beta I$ is self-adjoint. Therefore,
	\begin{align*}
		X\in W_n(T)
		&\Leftrightarrow \alpha X+\beta I_n\in W_n(\alpha T+\beta I)\\
		&\Leftrightarrow W(\alpha X+\beta I_n)\subseteq\overline{W(\alpha T+\beta I)}, \text{ by Theorem }\ref{matricial range of self-adjoint operators}\\
		&\Leftrightarrow W(X)\subseteq\overline{W(T)}\tag*{\qed}
	\end{align*}

Next, we provide a description of the matricial range of an operator $T$ with $0\in W(T)^\circ$.

\begin{thm}\label{description of the matricial range}
		Let $T\in\mathcal{B}(\mathcal{H})$ with $0\in W(T)^\circ$ and $n\in\mathbb{N}$. Then
	\begin{align*}
		W_n(T)=\bigcap\limits_{B\in\mathbb{M}_n}\left\{X\in\mathbb{M}_n: W(B\otimes X)\subseteq\overline{W(B\otimes T)}\right\}.
	\end{align*}
\end{thm}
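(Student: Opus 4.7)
The plan is to establish both inclusions separately: the forward direction via Lemma \ref{equivalent criterion for positivity} applied at the tensor level, and the reverse direction via Lemma \ref{n-positivity} rephrased in numerical-range language.

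For the forward inclusion, I would fix $X \in W_n(T)$ and pick a UCP map $\varphi : \mathcal{OS}(T) \to \mathbb{M}_n$ with $\varphi(T) = X$. Since $\varphi$ is $n$-positive, its amplification $\varphi_n : \mathbb{M}_n(\mathcal{OS}(T)) \to \mathbb{M}_n(\mathbb{M}_n)$ is unital, self-adjoint preserving, and positive. For each $B \in \mathbb{M}_n$, the inclusion $\mathcal{OS}(B \otimes T) \subseteq \mathbb{M}_n(\mathcal{OS}(T))$ lets me restrict $\varphi_n$ to a unital self-adjoint preserving positive map from $\mathcal{OS}(B \otimes T)$ to $\mathcal{OS}(B \otimes X)$ sending $B \otimes T$ to $B \otimes X$. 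Applying Lemma \ref{equivalent criterion for positivity} to this restriction yields $W(B \otimes X) \subseteq \overline{W(B \otimes T)}$, which is the desired forward containment.

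For the reverse inclusion, suppose $X \in \mathbb{M}_n$ satisfies $W(B \otimes X) \subseteq \overline{W(B \otimes T)}$ for every $B \in \mathbb{M}_n$. Since $0 \in W(T)^\circ$, the set $\{I, T, T^*\}$ is linearly independent, so one can define a unital self-adjoint preserving linear map $\varphi : \mathcal{OS}(T) \to \mathcal{OS}(X)$ by $\varphi(T) = X$. The crucial step is the half-plane reading
\begin{equation*}
I_n \otimes I + B \otimes T + B^* \otimes T^* \geq 0 \iff \overline{W(B \otimes T)} \subseteq \bigl\{z \in \mathbb{C} : \Re(z) \geq -\tfrac{1}{2}\bigr\},
\end{equation*}
obtained by rewriting the left side as $I + 2\Re(B \otimes T) \geq 0$ and passing to quadratic forms; the identical equivalence holds with $T$ replaced by $X$. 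Thus the hypothesis propagates the half-plane constraint from $T$ to $X$, which is exactly the criterion of Lemma \ref{n-positivity}, and $\varphi$ is $n$-positive. Since the codomain of $\varphi$ is $\mathbb{M}_n$, the classical fact that every $n$-positive linear map into $\mathbb{M}_n$ is completely positive upgrades $\varphi$ to a UCP map, so $X = \varphi(T) \in W_n(T)$.

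The principal technical point is the displayed half-plane equivalence: bridging the matricial positivity condition from Lemma \ref{n-positivity} to the closed-half-plane description \eqref{half plane description of the numerical range} of the numerical range is what couples the two sides of the identity. Once this bridge is in place, the forward inclusion is immediate from amplification, and the reverse inclusion reduces to the routine observation that a half-plane containing $\overline{W(B \otimes T)}$ automatically contains the smaller set $W(B \otimes X)$.
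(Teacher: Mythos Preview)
Your proof is correct and follows essentially the same route as the paper: both directions hinge on the interplay between Lemma~\ref{equivalent criterion for positivity} and Lemma~\ref{n-positivity}, with the forward inclusion obtained by amplifying a UCP map and the reverse by transferring the positivity condition $I_n\otimes I + B\otimes T + B^*\otimes T^*\geq 0$ from $T$ to $X$ via the numerical-range hypothesis. The only cosmetic difference is that for the reverse inclusion the paper invokes Lemma~\ref{equivalent criterion for positivity} once more (applied to the map $\eta:\mathcal{OS}(B\otimes T)\to\mathcal{OS}(B\otimes X)$) to pass the inequality through, whereas you unwind that lemma to its half-plane content directly; your explicit appeal to the fact that $n$-positivity into $\mathbb{M}_n$ forces complete positivity is also a cleaner way to close the argument than the paper's phrasing.
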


\begin{proof}

	Suppose $X\in W_n(T)$. Then there exists a UCP map $\varphi:\mathcal{OS}(T)\rightarrow\mathcal{OS}(X)$ such that $X=\varphi(T)$. Let $B\in\mathbb{M}_n$. Since $\varphi$ is $n$-positive, the unital self-adjoint preserving map $\psi:\mathcal{OS}(B\otimes T)\rightarrow\mathcal{OS}(B\otimes X)$ with $\psi(B\otimes T)=B\otimes X$ is positive and hence, by Lemma \ref{equivalent criterion for positivity}, $W(B\otimes X)\subseteq\overline{W(B\otimes T)}$. Therefore, $X\in\text{RHS}$. Conversely, let $X\in$ RHS. Consider the unital self-adjoint preserving map $\xi:\mathcal{OS}(T)\rightarrow\mathcal{OS}(X)$ such that $\xi(T)=X$. Let $B\in\mathbb{M}_n$ such that $I_n\otimes I+B\otimes T+B^*\otimes T^*\geq 0$. Since $W(B\otimes X)\subseteq\overline{W(B\otimes T)}$, by Lemma \ref{equivalent criterion for positivity}, the unital self-adjoint preserving map $\eta:\mathcal{OS}(B\otimes T)\rightarrow\mathcal{OS}(B\otimes X)$ with $\eta(B\otimes T)=B\otimes X$ is positive and hence $I_n\otimes I_n+B\otimes X+B^*\otimes X^*\geq 0$. This imples, by Lemma \ref{equivalent criterion for CP}, that $\xi$ is $n$-positive for every $n$ and hence completely positive. Therefore, $X\in W_n(T)$. This completes the proof.
\end{proof}

As a consequence, we obtain a description of the matricial range of an operator $T$ with $0\in W(T)^\circ$ as intersection of closed half-planes discussed below.

\begin{cor}\label{half-planes description of the matricial range}
	Let $T\in\mathcal{B}(\mathcal{H})$ with $0\in W(T)^\circ$ and $n\in\mathbb{N}$. Then
	\begin{align*}
		W_n(T)=\bigcap\limits_{B\in\mathbb{M}_n}\left\{X\in\mathbb{M}_n: \Re{(B\otimes X)}\leq\lambda_1(\Re{(B\otimes T)})\right\}.
	\end{align*}
\end{cor}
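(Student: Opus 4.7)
The plan is to derive this corollary directly from Theorem \ref{description of the matricial range} together with the half-plane description of the closure of the numerical range given in \eqref{half plane description of the numerical range}. First I would invoke Theorem \ref{description of the matricial range} to rewrite membership $X \in W_n(T)$ as the family of inclusions $W(B \otimes X) \subseteq \overline{W(B \otimes T)}$, one for each $B \in \mathbb{M}_n$.

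Next, applying \eqref{half plane description of the numerical range} with $B \otimes T$ in place of $T$, I would express $\overline{W(B \otimes T)}$ as the intersection over $\theta \in [0, 2\pi)$ of the closed half-planes $\{z \in \mathbb{C} : \Re(e^{i\theta} z) \leq \lambda_1(\Re(e^{i\theta}(B \otimes T)))\}$. The inclusion $W(B \otimes X) \subseteq \overline{W(B \otimes T)}$ then holds if and only if, for each $\theta$, every element of $W(B \otimes X)$ lies in the corresponding half-plane, which is equivalent to the inequality
\[
\lambda_1(\Re(e^{i\theta}(B \otimes X))) \leq \lambda_1(\Re(e^{i\theta}(B \otimes T)))
\]
since the left-hand side is precisely $\sup_{z \in W(B \otimes X)} \Re(e^{i\theta}z)$.

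Finally, I would absorb the rotation parameter $\theta$ into the matrix $B$. Because $e^{i\theta}(B \otimes X) = (e^{i\theta}B) \otimes X$ and the set $\{e^{i\theta} B : B \in \mathbb{M}_n,\ \theta \in [0, 2\pi)\}$ equals $\mathbb{M}_n$, the double intersection indexed by $(B, \theta)$ collapses to an intersection indexed by $B \in \mathbb{M}_n$ of the single inequality $\lambda_1(\Re(B \otimes X)) \leq \lambda_1(\Re(B \otimes T))$. Interpreting this, in the customary identification of the scalar $\lambda_1(\Re(B \otimes T))$ with the corresponding multiple of the identity operator, as the operator inequality $\Re(B \otimes X) \leq \lambda_1(\Re(B \otimes T)) I$ yields the claimed description.

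There is no real obstacle in this chain of implications: the corollary is essentially a reformulation of Theorem \ref{description of the matricial range} in half-plane language. The only point requiring a brief verification is the lossless absorption of $e^{i\theta}$ into $B$, which is immediate from the observation above.
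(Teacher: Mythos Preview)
Your proposal is correct and follows essentially the same approach as the paper: both arguments reduce to Theorem~\ref{description of the matricial range} and then pass between the numerical-range inclusion and the half-plane inequalities via \eqref{half plane description of the numerical range}, absorbing the rotation $e^{i\theta}$ into $B$. The paper carries out the two inclusions separately (the forward direction by a direct limit argument on unit vectors, the reverse by contraposition through the half-plane description), but the content is the same as your more streamlined reformulation.
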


\begin{proof}
	Let $X\in W_n(T)$. Suppose $B\in\mathbb{M}_n$. Then, by Theorem \ref{description of the matricial range}, $W(B\otimes X)\subseteq \overline{W(B\otimes T)}$. Let $f\in\mathbb{C}^n\otimes\mathbb{C}^n$ with $\Vert f\Vert=1$. Then
	\begin{align*}
		\langle \Re{(B\otimes X)} f,f\rangle
		&=\Re\langle B\otimes Xf,f\rangle\\
		&=\lim_{n\rightarrow\infty}\Re\langle B\otimes T g_n,g_n\rangle, \text{ for some } g_n\in\mathbb{C}^n\otimes\mathcal{H} \text{ with } \Vert g_n\Vert=1\\
		&=\lim_{n\rightarrow\infty}\langle \Re{(B\otimes T)} g_n,g_n\rangle\\
		&\leq\lambda_1(\Re{(B\otimes T)}).
	\end{align*}
	Therefore, $X\in\text{RHS}$. Conversely, let $X\in\text{RHS}$. Suppose $B\in\mathbb{M}_n$. By Theorem \ref{description of the matricial range}, it is sufficient to prove $W(B\otimes X)\subseteq \overline{W(B\otimes T)}$. Let $z\notin\overline{W(B\otimes T)}$. Then, by Eqn \ref{half plane description of the numerical range}, there exists $\theta\in[0,2\pi)$ such that $\Re{(e^{i\theta}z)}>\lambda_1(\Re{(e^{i\theta}B\otimes T)})\geq\Re{(e^{i\theta}B\otimes X)}$. This implies that $\Re{(e^{i\theta}z)}>\lambda_1(\Re{(e^{i\theta}B\otimes X)})$. So, by Eqn \ref{half plane description of the numerical range}, $z\notin\overline{W(B\otimes X)}$ and hence $z\notin W(B\otimes X)$. This completes the proof.
\end{proof}

We conclude this section with a description of the matricial range of a general operator.

\begin{cor}
		Let $T\in\mathcal{B}(\mathcal{H})$ with $\lambda\in W(T)^\circ$ and $n\in\mathbb{N}$. Then
		\begin{align*}
			W_n(T)=\bigcap\limits_{B\in\mathbb{M}_n}\left\{X\in\mathbb{M}_n: W(B\otimes (X-\lambda I_n))\subseteq\overline{W(B\otimes (T-\lambda I))}\right\}.
		\end{align*}
\end{cor}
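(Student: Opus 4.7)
The plan is to reduce this corollary to the already-proved Theorem \ref{description of the matricial range} by a translation argument, exploiting property (P3) of matricial ranges.

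First I would set $T' := T - \lambda I$ and observe that $W(T') = W(T) - \lambda$, so the hypothesis $\lambda \in W(T)^\circ$ translates to $0 \in W(T')^\circ$. This puts $T'$ in the setting of Theorem \ref{description of the matricial range}, giving
\begin{align*}
W_n(T') = \bigcap_{B \in \mathbb{M}_n}\left\{Y \in \mathbb{M}_n : W(B \otimes Y) \subseteq \overline{W(B \otimes T')}\right\}.
\end{align*}

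Next I would apply the affine translation property (P3), which asserts $W_n(\alpha T + \beta I) = \alpha W_n(T) + \beta$. Taking $\alpha = 1$ and $\beta = -\lambda$ yields $W_n(T') = W_n(T) - \lambda I_n$, so the condition $X \in W_n(T)$ is equivalent to $X - \lambda I_n \in W_n(T')$. Substituting $Y = X - \lambda I_n$ into the characterization above immediately produces the asserted equality.

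The argument is essentially bookkeeping; there is no real obstacle, since the only nontrivial content is Theorem \ref{description of the matricial range} together with the translation invariance recorded in (P3). The one small point worth verifying explicitly is that the shifted condition inside the intersection, namely $W(B \otimes (X - \lambda I_n)) \subseteq \overline{W(B \otimes (T - \lambda I))}$, is exactly what one obtains by replacing $Y$ with $X - \lambda I_n$ in the formula for $W_n(T')$, which is immediate from the definitions.
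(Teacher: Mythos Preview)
Your proposal is correct and matches the paper's own proof essentially line for line: both use (P3) to pass from $T$ to $T' = T - \lambda I$ so that $0 \in W(T')^\circ$, apply Theorem \ref{description of the matricial range} to $T'$, and then substitute $Y = X - \lambda I_n$ to obtain the stated characterization.
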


\noindent\textit{Proof.}
	The proof follows from Theorem \ref{description of the matricial range} because
	\begin{align*}
		X\in W_n(T)
		&\Leftrightarrow X-\lambda I_n\in W_n(T-\lambda I)\\
		&\Leftrightarrow W(B\otimes (X-\lambda I_n))\subseteq\overline{W(B\otimes (T-\lambda I))}, \text{ for any } B\in\mathbb{M}_n. \tag*{\qed}
	\end{align*}

\begin{cor}
		Let $T\in\mathcal{B}(\mathcal{H})$ with $\lambda\in W(T)^\circ$ and $n\in\mathbb{N}$. Then
		\begin{align*}
			W_n(T)=\bigcap\limits_{B\in\mathbb{M}_n}\left\{X\in\mathbb{M}_n: \Re{(B\otimes (X-\lambda I_n))}\leq\lambda_1(\Re{(B\otimes (T-\lambda I))})\right\}.
		\end{align*}
\end{cor}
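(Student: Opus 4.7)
The target statement is a routine translation of the preceding corollary (the half-plane description at $0 \in W(T)^\circ$) to the case of a general interior point $\lambda$, so the plan is essentially a two-step reduction rather than any new analysis.

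First, I would invoke property (P3) of the matricial range: with $\alpha=1$, $\beta=-\lambda$ this gives
\[
	W_n(T-\lambda I) = W_n(T) - \lambda,
\]
so the equivalence
\[
	X \in W_n(T) \iff X - \lambda I_n \in W_n(T - \lambda I)
\]
holds. Since $\lambda \in W(T)^\circ$ and the numerical range translates in the obvious way under addition of a scalar multiple of the identity, we have $0 \in W(T-\lambda I)^\circ$. This places $T-\lambda I$ exactly in the hypothesis class of Corollary \ref{half-planes description of the matricial range}.

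Next, I would apply Corollary \ref{half-planes description of the matricial range} to $T-\lambda I$ with the matrix variable still ranging over $B \in \mathbb{M}_n$: the corollary yields
\[
	X - \lambda I_n \in W_n(T - \lambda I) \iff \Re(B \otimes (X - \lambda I_n)) \leq \lambda_1(\Re(B \otimes (T - \lambda I))) \text{ for every } B \in \mathbb{M}_n.
\]
Chaining this with the translation equivalence from the previous paragraph gives exactly the claimed description of $W_n(T)$.

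There is no real obstacle here; it is a one-line proof once one matches the template of Corollary \ref{half-planes description of the matricial range}. The only thing worth double-checking is that the indexing set $B \in \mathbb{M}_n$ is preserved under the translation (it is, since the statement applied to $T - \lambda I$ already quantifies over all of $\mathbb{M}_n$, and the tensor $B \otimes I_n$ or $B \otimes I$ is unaffected by the translation). Thus the proof I would write is essentially the author's displayed chain of equivalences, invoking Corollary \ref{half-planes description of the matricial range} at the middle step in place of Theorem \ref{description of the matricial range}.
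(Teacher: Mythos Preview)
Your proposal is correct and matches the paper's own argument exactly: the paper simply states that the proof is immediate from Corollary \ref{half-planes description of the matricial range}, and your translation step via (P3) followed by an application of that corollary to $T-\lambda I$ is precisely what ``immediate'' means here.
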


\begin{proof}
	The proof is immediate from Corollary \ref{half-planes description of the matricial range}.
\end{proof}

\section{Dilation of an operator $B$ satisfying $BB^*+B^*B\leq I$}\label{dilation}

In this section, we establish a structure theorem for a dilation of an operator $B$ satisfying $BB^*+B^*B=I$ and then investigate whether every $B\in\mathbb{M}_n$ satisfying $BB^*+B^*B\leq I_n$ admits a dilation $\widetilde{B}$ for which $\widetilde{B}\widetilde{B}^*+\widetilde{B}^*\widetilde{B}=I$. This analysis plays the key role in the subsequent sections. Beyond this specific application, the results are significant in the dilation theory also. The following theorem provides the structure of a dilation of $B\in\mathbb{M}_n$ satisfying $BB^*+B^*B=I$.

\begin{thm}\label{Structure of a dilation of B satisfying BB*+B*B=I}
	Let $B\in\mathbb{M}_n$ be such that $BB^*+B^*B=I_n$. Then $B$ has a dilation $\widetilde{B}$ of the form
	\begin{align*}
		\widetilde{B}=\bigoplus_{\substack{\alpha,\beta\in\mathbb{C} \\ \vert\alpha\vert^2+\vert\beta\vert^2=1}} \begin{pmatrix}
			0 & \alpha \\
			\beta & 0 \\
		\end{pmatrix}
	\end{align*}
\end{thm}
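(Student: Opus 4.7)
The plan is to exploit the fact that $BB^*+B^*B=I_n$ forces $B^*B=I_n-BB^*$, so $BB^*$ and $B^*B$ commute and share a common orthonormal eigenbasis. Write $V_p$ for the $p$-eigenspace of $B^*B$, where necessarily $p\in[0,1]$. For $v\in V_p$ the computation
\[
(B^*B)(Bv)=B(BB^*)v=B(I_n-B^*B)v=(1-p)Bv
\]
shows that $B(V_p)\subseteq V_{1-p}$. Since $B^*B$ and $BB^*$ have the same nonzero eigenvalues with multiplicities and $BB^*=I_n-B^*B$ has eigenvalue $1-p$ precisely on $V_p$, we obtain $\dim V_p=\dim V_{1-p}$ for every $p\in\sigma(B^*B)$. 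Hence
\[
\mathbb{C}^n = V_{1/2} \oplus \bigoplus_{p\in\sigma(B^*B)\cap[0,1/2)}(V_p \oplus V_{1-p})
\]
is an orthogonal $B$-invariant decomposition, and it suffices to handle each summand separately.

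On a pair $V_p \oplus V_{1-p}$ with $p \neq 1/2$, polar decomposition gives $B|_{V_p \oplus V_{1-p}} = \left(\begin{smallmatrix} 0 & \sqrt{1-p}\,U_2 \\ \sqrt{p}\,U_1 & 0 \end{smallmatrix}\right)$, where $U_1 : V_p \to V_{1-p}$ and $U_2 : V_{1-p} \to V_p$ are unitaries (the edge cases $p\in\{0,1\}$ produce one zero corner and require no further comment). The key trick is to pick an ONB $\{u_j\}$ of $V_p$ diagonalising the unitary $U_2 U_1$ with eigenvalues $e^{i\phi_j}$, and set $v_j := U_1 u_j$, which is then automatically an ONB of $V_{1-p}$. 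Then $U_1 u_j = v_j$ and $U_2 v_j = e^{i\phi_j} u_j$, so in the reordered ONB $(u_1, v_1, u_2, v_2, \ldots)$ this summand becomes the direct sum $\bigoplus_j \left(\begin{smallmatrix} 0 & \sqrt{1-p}\, e^{i\phi_j} \\ \sqrt{p} & 0 \end{smallmatrix}\right)$, each block satisfying $|\alpha|^2 + |\beta|^2 = 1$. Thus the pair summands are already in the advertised form up to unitary conjugation.

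For the $V_{1/2}$ summand, $(B|_{V_{1/2}})^*(B|_{V_{1/2}}) = \tfrac{1}{2} I$, so $B|_{V_{1/2}} = \tfrac{1}{\sqrt{2}} U_0$ for some unitary $U_0$. Diagonalising $U_0$ with eigenvalues $e^{i\theta_j}$ reduces matters to dilating each scalar $\tfrac{e^{i\theta_j}}{\sqrt{2}}$ separately, and the explicit $2 \times 2$ dilation $\widetilde{B}_j = \left(\begin{smallmatrix} 0 & e^{i\theta_j}/\sqrt{2} \\ e^{i\theta_j}/\sqrt{2} & 0 \end{smallmatrix}\right)$ works: its compression to the unit vector $\tfrac{1}{\sqrt{2}}(1, 1)^T$ equals $\tfrac{e^{i\theta_j}}{\sqrt{2}}$, and $|\alpha|^2 + |\beta|^2 = 1$. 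The direct sum of all the $2 \times 2$ blocks produced across both paragraphs is the required $\widetilde{B}$, and $B$ is realised as its compression to the natural subspace (the full summand on each pair block, the span of $\tfrac{1}{\sqrt{2}}(1, 1)^T$ on each $V_{1/2}$ block). The delicate step I expect to be the main obstacle is the simultaneous reduction of $U_1$ and $U_2$ via a single change of basis on $V_p$; verifying $\dim V_p = \dim V_{1-p}$ and dilating the $V_{1/2}$ summand (the only place where a proper dilation, rather than a unitary reshuffle, is needed) are straightforward by comparison.
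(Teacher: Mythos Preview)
Your proof is correct and takes a genuinely different route from the paper's. The paper first passes to the $2n\times 2n$ dilation $\begin{pmatrix}0&B\\B&0\end{pmatrix}$ (compressed via the diagonal isometry $f\mapsto\tfrac{1}{\sqrt 2}(f\oplus f)$), then uses the polar decomposition $B=U|B|$ together with a separate lemma on the simultaneous block form of $|B|$ and $|B^*|$ to decompose this $2n\times 2n$ matrix into the required $2\times 2$ summands. You instead work directly on $\mathbb{C}^n$: the eigenspace decomposition of $B^*B$ already splits $B$ into reducing summands $V_p\oplus V_{1-p}$, and on each such pair (with $p\neq\tfrac12$) your diagonalisation of $U_2U_1$ shows that $B$ is \emph{unitarily equivalent} to a direct sum of the desired $2\times 2$ blocks, with no dilation needed; only the $V_{1/2}$ summand requires a genuine (dimension-doubling) dilation. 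Consequently your dilation has size $n+\dim V_{1/2}\le 2n$, which is in general strictly smaller than the paper's. The underlying structural insight --- that the polar unitary intertwines the $\lambda$- and $\sqrt{1-\lambda^2}$-eigenspaces --- is the same in both arguments, but your organisation avoids the auxiliary lemma and the initial doubling step, at the cost of handling the $p\in\{0,1\}$ edge case and the $V_{1/2}$ block separately.
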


To prove this, we need the following lemma.

\begin{lem}\label{lemma for structure theorem}
	Let $B\in\mathbb{M}_n$ satisfying $BB^*+B^*B=I_n$. Then there exists a unitary $V\in\mathbb{M}_n$ such that 
	\begin{align*}
		V^*\vert B\vert V=\bigoplus_{0\leq \lambda\leq 1}\begin{pmatrix}
			\lambda I_r & 0 \\
			0 & \sqrt{1-\lambda^2} I_s \\
		\end{pmatrix},\quad
		V^*\vert B^*\vert V=\bigoplus_{0\leq \lambda\leq 1}\begin{pmatrix}
			\sqrt{1-\lambda^2}I_r & 0 \\
			0 & \lambda I_s \\
		\end{pmatrix}
	\end{align*}
	where $\lambda$'s are distinct for different summands in the decomposition and $r=r(\lambda), s=s(\lambda)\in\mathbb{N}$.
\end{lem}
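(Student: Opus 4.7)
The plan is to reduce the lemma to the spectral theorem for a single commuting pair of Hermitian matrices, with the bulk of the content coming from one key observation about the hypothesis.

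First I would translate the hypothesis into information about $|B|$ and $|B^*|$. Since $|B|^2 = B^*B$ and $|B^*|^2 = BB^*$, the assumption $BB^* + B^*B = I_n$ is exactly
\begin{equation*}
|B|^2 + |B^*|^2 = I_n.
\end{equation*}
Because $|B^*|$ is a positive semidefinite matrix whose square equals $I_n - |B|^2$, uniqueness of the positive square root yields $|B^*| = (I_n - |B|^2)^{1/2}$. In particular $|B^*|$ is a polynomial in $|B|$, so $|B|$ and $|B^*|$ commute.

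Next I would simultaneously diagonalize the commuting pair $|B|$ and $|B^*|$ by the spectral theorem: there exists a unitary $V_0 \in \mathbb{M}_n$ and nonnegative scalars $\lambda_1,\ldots,\lambda_n$ with
\begin{equation*}
V_0^*|B|V_0 = \mathrm{diag}(\lambda_1,\ldots,\lambda_n), \qquad V_0^*|B^*|V_0 = \mathrm{diag}\bigl(\sqrt{1-\lambda_1^2},\ldots,\sqrt{1-\lambda_n^2}\bigr).
\end{equation*}
The key observation is that the values $\lambda_i$ split naturally into unordered pairs $\{\lambda,\sqrt{1-\lambda^2}\}$: two indices $i,j$ belong to the same pair precisely when $\lambda_i \in \{\lambda_j, \sqrt{1-\lambda_j^2}\}$. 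For each such distinct pair I would let $r$ denote the number of indices with $\lambda_i = \lambda$ and $s$ the number with $\lambda_i = \sqrt{1-\lambda^2}$ (for the fixed point $\lambda = 1/\sqrt{2}$ the splitting into $r + s$ is unrestricted).

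Finally I would apply a permutation unitary $P$ that, within each pair-class, lists the $r$ indices with $|B|$-eigenvalue $\lambda$ first and the $s$ indices with $|B|$-eigenvalue $\sqrt{1-\lambda^2}$ next, and that concatenates the pair-classes in some fixed order. Setting $V := V_0 P$, the matrix $V^*|B|V$ becomes the claimed block-diagonal form; and since $|B^*|$ acts as $\sqrt{1-\lambda_i^2}$ on each joint eigenvector, within each block the $|B^*|$-eigenvalues on the first $r$ slots are $\sqrt{1-\lambda^2}$ and on the next $s$ slots are $\lambda$, giving exactly the swapped form claimed for $V^*|B^*|V$.

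There is no serious obstacle here; the statement is essentially a book-keeping corollary of the spectral theorem once one notices that the hypothesis $BB^*+B^*B = I_n$ forces $|B|$ and $|B^*|$ to commute via the identity $|B^*| = (I_n - |B|^2)^{1/2}$. The only care needed is the pairing/ordering step so that the block rows display $\lambda$ paired with $\sqrt{1-\lambda^2}$ and that each equivalence class $\{\lambda,\sqrt{1-\lambda^2}\}$ is listed once.
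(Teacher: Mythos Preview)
Your argument is correct and is in fact cleaner than the paper's. The decisive observation you make---that $|B^*|^2 = I_n - |B|^2$ forces $|B^*| = (I_n - |B|^2)^{1/2}$ by uniqueness of positive square roots, so $|B^*|$ is a function of $|B|$---immediately gives simultaneous diagonalizability with the explicit eigenvalue correspondence $\lambda \leftrightarrow \sqrt{1-\lambda^2}$. After that, as you say, only a permutation is needed.

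The paper argues differently. It starts from the fact that $|B|$ and $|B^*|$ share the same eigenvalue multiset (same singular values), so after simultaneous diagonalization the eigenvalue list of $|B^*|$ is a permutation $\sigma$ of that of $|B|$; it then analyzes the cycle decomposition of $\sigma$, using $\lambda_{i_d}^2 + \lambda_{i_{d+1}}^2 = 1$ along each cycle to force the alternating pattern $\lambda,\sqrt{1-\lambda^2},\lambda,\ldots$ (and all $1/\sqrt{2}$ on odd cycles). Your route bypasses this combinatorics entirely: once $|B^*| = f(|B|)$ with $f(x)=\sqrt{1-x^2}$, the pairing is automatic. The one thing the paper's approach makes visible that yours leaves implicit is that both members of each pair $\{\lambda,\sqrt{1-\lambda^2}\}$ actually occur among the eigenvalues of $|B|$ (indeed with equal multiplicity), guaranteeing $r,s\geq 1$; in your framework this follows in one line from the equality of the eigenvalue multisets of $|B|$ and $|B^*|$, should you want to record it.
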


\begin{proof}
	We note that $\vert B\vert$ and $\vert B^*\vert$ have the same set of eigen values with equal multiplicity. Since $\vert B\vert^2+\vert B^*\vert^2=I_n$, there exists a unitary $U\in\mathbb{M}_n$ such that $U^*\vert B\vert U=\text{diag}(\lambda_1,\hdots,\lambda_n)$ and $U^*\vert B^*\vert U=\text{diag}(\lambda_{\sigma(1)},\hdots, \lambda_{\sigma(n)})$ where $0\leq\lambda_j\leq 1$ for all $j$ and $\sigma$ is a permutation on $\{1,\hdots,n\}$. Let $\sigma=\sigma_1\hdots\sigma_m$ where $\sigma_1,\hdots,\sigma_m$ are disjoint cycle. Suppose $\sigma_j=(i_1,\hdots,i_k)$ where $1\leq j\leq m$ and $1\leq k\leq n$. Check that $\lambda_{i_d}^2+\lambda_{i_{d+1}}^2=1$ for all $1\leq d\leq k$ where $\lambda_{i_{k+1}}=\lambda_{i_1}$. If $k$ is odd, this forces $\lambda_{i_d}=\frac{1}{\sqrt{2}}$ for all $1\leq d\leq p$. Let $k=2t$ for some $t$. Then $\lambda_{i_1}=\lambda_{i_3}=\cdots=\lambda_{i_{(2t-1)}}$ and $\lambda_{i_2}=\lambda_{i_4}=\cdots=\lambda_{i_{2t}}$. Therefore, there exists a unitary $V\in\mathbb{M}_n$ such that 
	\begin{align*}
		V^*\vert B\vert V=\bigoplus_{0\leq \lambda\leq 1}\begin{pmatrix}
			\lambda I_r & 0 \\
			0 & \sqrt{1-\lambda^2} I_s \\
		\end{pmatrix},\quad
		V^*\vert B^*\vert V=\bigoplus_{0\leq \lambda\leq 1}\begin{pmatrix}
			\sqrt{1-\lambda^2}I_r & 0 \\
			0 & \lambda I_s \\
		\end{pmatrix}
	\end{align*}
where $r=r(\lambda), s=s(\lambda)\in\mathbb{N}$. The remaining part of the proof that $\lambda$'s are distinct for different summands in the decomposition follows by applying unitary similarity.
\end{proof}

\begin{proof}[\textbf{Proof of Theorem \ref{Structure of a dilation of B satisfying BB*+B*B=I}}]
	We consider 
	\begin{align*}
		\widetilde{B}=\begin{pmatrix}
			0 & B \\
			B & 0 \\
		\end{pmatrix}.
	\end{align*}
	Let $f,g\in\mathbb{C}^n$ satisfying $\Vert f\Vert=\Vert g\Vert=1$. Note that $\langle\widetilde{B}\frac{f\oplus f}{\sqrt{2}},\frac{g\oplus g}{\sqrt{2}}\rangle=\langle Bf,g\rangle$. Therefore, $\widetilde{B}$ is a dilation of $B$. Now, by polar decomposition, we write $B=U\vert B\vert$ where $U\in\mathbb{M}_n$ is unitary. Observe that $	\vert B^*\vert=(BB^*)^{\frac{1}{2}}=(U\vert B\vert^2U^*)^{\frac{1}{2}}=U\vert B\vert U^*$. Using this, we obtain
	\begin{align*}
		\begin{pmatrix}
			U^* &  \\
			    & I \\
		\end{pmatrix}\begin{pmatrix}
		0 & B \\
		B & 0 \\
		\end{pmatrix}\begin{pmatrix}
		U & 0 \\
		0 & I \\
		\end{pmatrix}=\begin{pmatrix}
		0 & \vert B\vert \\
		\vert B^*\vert U^2 & 0 \\
		\end{pmatrix}.
	\end{align*}
	Next, by Lemma \ref{lemma for structure theorem}, there exists a unitary $V\in\mathbb{M}_n$ such that $V^*\vert B\vert V=\oplus_{\lambda} B_\lambda$ and $V^*\vert B^*\vert V=\oplus_{\lambda}B_\lambda'$ where $0\leq\lambda\leq 1$,
		\begin{align*}
		B_\lambda=\begin{pmatrix}
			\lambda I_r & 0 \\
			0 & \sqrt{1-\lambda^2} I_s \\
		\end{pmatrix},\; 
		B_\lambda'=\begin{pmatrix}
			\sqrt{1-\lambda^2}I_r & 0 \\
			0 & \lambda I_s \\
		\end{pmatrix}
	\end{align*}
	such that $\lambda$'s are distinct for different summands in the decomposition and $r=r(\lambda), s=s(\lambda)\in\mathbb{N}$. Let $V^*\vert B\vert V$ and $V^*\vert B^*\vert V$ be written with respect to the orthonormal basis $\{e_j\}_{j=1}^\infty$. Now observe that $\langle U\vert B\vert e_j,e_i\rangle=\langle \vert B^*\vert U e_j,e_i\rangle
	\Rightarrow (\lambda_j-\lambda_{\sigma(i)})\langle Ue_j,e_i\rangle=0$ where $\vert B\vert e_j=\lambda_j e_j, \vert B^*\vert e_j=\lambda_{\sigma(j)} e_j$ for all $1\leq j\leq n$ and $\sigma$ is a permutation on $\{1,\cdots,n\}$. This forces that $V^*UV=\oplus_\lambda U_\lambda$ where 
	\begin{align}\label{1}
		U_\lambda=\begin{pmatrix}
			0 & U_{12}(\lambda) \\
			U_{21}(\lambda) & 0 \\
		\end{pmatrix}
	\end{align}
	corresponding to $\lambda\neq\frac{1}{\sqrt{2}}$. Therefore, $B$ has a dilation of the form
	\begin{align*}
		\bigoplus_\lambda \begin{pmatrix}
			0 & B_\lambda \\
			B_\lambda'U_\lambda^2 & 0 \\
		\end{pmatrix}.
	\end{align*}
	Now we argue that every summand in the previous decomposition of the dilation of $B$ decomposes to the desired form. Indeed, if $\lambda=\frac{1}{\sqrt{2}}$ then $B_\lambda=B_\lambda'=\frac{1}{\sqrt{2}}I_{r+s}$ and hence the conclusion follows by applying unitary similarity. Also, if $\lambda\neq\frac{1}{\sqrt{2}}$ then the conclusion follows using the form of $U_\lambda$ given in Eqn \ref{1} and applying unitary similarity. This completes the proof.
\end{proof}

Next, we generalizes Theorem \ref{Structure of a dilation of B satisfying BB*+B*B=I} for an operator. Most precisely, we have the following.

\begin{thm}\label{Structure theorem for infinite version}
	Let $B\in\mathcal{B}(\mathcal{H})$ be such that $BB^*+B^*B=I$. Then $B$ has a dilation of the form
	\begin{align*}
		\int\limits_{\vert\alpha\vert^2+\vert\beta\vert^2=1}\begin{pmatrix}
			0 & \alpha \\
			\beta & 0 \\
		\end{pmatrix}dE(\alpha,\beta)
	\end{align*}
	where $E$ is a spectral measure on the Borel $\sigma$-algebra of $\mathbb{C}^2$ (relative to $\mathcal{H}$). 
\end{thm}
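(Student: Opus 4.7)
The plan is to lift the finite-dimensional argument of Theorem \ref{Structure of a dilation of B satisfying BB*+B*B=I} to the infinite-dimensional setting by replacing direct sums over eigenvalues with direct integrals against the joint spectral measure of $|B|$ and $|B^*|$. To begin, I would use the same off-diagonal dilation $\widetilde{B}_0 = \begin{pmatrix} 0 & B \\ B & 0 \end{pmatrix}$ on $\mathcal{H} \oplus \mathcal{H}$, which automatically compresses to $B$. The relation $|B|^2 + |B^*|^2 = I$ forces $|B^*| = \sqrt{I - |B|^2}$ to lie in the $C^*$-algebra generated by $|B|$, so $|B|$ and $|B^*|$ commute and generate an abelian von Neumann algebra $\mathcal{A}$ whose joint spectrum is contained in the quarter-arc $\Gamma = \{(\alpha,\beta) : \alpha,\beta \geq 0,\ \alpha^2 + \beta^2 = 1\}$. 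The joint spectral theorem then produces a resolution of the identity $E_0$ on $\Gamma$ (relative to $\mathcal{H}$) with $|B| = \int_\Gamma \alpha\, dE_0(\alpha,\beta)$ and $|B^*| = \int_\Gamma \beta\, dE_0(\alpha,\beta)$.

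Next, I would write the polar decomposition $B = U|B|$ and, after possibly further dilating $B$ by direct summing with a canonical solution of the relation $BB^* + B^*B = I$ to balance $\dim \ker B$ and $\dim \ker B^*$, arrange for $U$ to be unitary on $\mathcal{H}$. The identity $BB^* = U|B|^2 U^*$ then yields $|B^*| = U|B|U^*$, and applying the continuous functional calculus $t \mapsto \sqrt{1-t^2}$ on both sides gives $U|B^*|U^* = |B|$. Composing the two intertwinings, $U^2$ commutes with both $|B|$ and $|B^*|$, so it lies in the commutant $\mathcal{A}'$. By measurable diagonalization for commutative von Neumann algebras, $U^2$ admits a direct integral representation $U^2 = \int_\Gamma^{\oplus} u(\alpha,\beta)\, dE_0(\alpha,\beta)$, where each fiber $u(\alpha,\beta)$ is a unitary on the corresponding fiber space $\mathcal{H}(\alpha,\beta)$ of the direct integral decomposition of $\mathcal{H}$ over $\Gamma$.

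Now I would conjugate $\widetilde{B}_0$ by $\mathrm{diag}(U^*, I)$, exactly as in the finite-dimensional proof, to obtain
\[
\begin{pmatrix} U^* & 0 \\ 0 & I \end{pmatrix} \widetilde{B}_0 \begin{pmatrix} U & 0 \\ 0 & I \end{pmatrix} = \begin{pmatrix} 0 & |B| \\ |B^*| U^2 & 0 \end{pmatrix}.
\]
Substituting the direct integral representations of $|B|$, $|B^*|$, and $U^2$ turns the right-hand side into a fiberwise $2\times 2$ operator whose fiber at $(\alpha,\beta)$ is $\begin{pmatrix} 0 & \alpha\, I \\ \beta\, u(\alpha,\beta) & 0 \end{pmatrix}$. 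Diagonalizing each $u(\alpha,\beta) = \int_{|z|=1} z\, dF_{\alpha,\beta}(z)$ by the spectral theorem on the unit circle, and combining $E_0$ with the measurable family $\{F_{\alpha,\beta}\}$ via the pushforward under $(\alpha,\beta,z) \mapsto (\alpha,\, \beta z)$, yields a single spectral measure $E$ on the unit sphere $\{(\alpha',\beta') \in \mathbb{C}^2 : |\alpha'|^2 + |\beta'|^2 = 1\}$ and the claimed direct integral representation of the dilation.

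The main technical obstacle will be the careful measurable-field bookkeeping: the direct integral diagonalization of $U^2$ inside $\mathcal{A}'$, together with the measurable choice of spectral data $F_{\alpha,\beta}$ and the verification that the pushforward produces a well-defined spectral measure on the sphere in $\mathbb{C}^2$. A secondary but nontrivial point is the preliminary step of arranging $U$ to be a genuine unitary without destroying the relation $BB^* + B^*B = I$, which requires dilating $B$ by carefully chosen canonical blocks rather than by zero.
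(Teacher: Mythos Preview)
Your approach is essentially the same as the paper's, but the paper streamlines the two places you flag as obstacles. First, no preliminary dilation is needed to make $U$ unitary: the paper proves (Lemma \ref{lemma for infinite structure theorem}) that $BB^*+B^*B=I$ already forces $\dim\ker B=\dim\ker B^*$, so the polar partial isometry extends to a unitary on $\mathcal{H}$ directly. This also keeps the spectral measure genuinely ``relative to $\mathcal{H}$'' as stated. Second, once $U^2$ commutes with $|B|$, the paper avoids the direct-integral and measurable-fiber machinery entirely: it simply takes the \emph{product} spectral measure $E=E_{|B|}\times E_{U^2}$ on $\mathbb{C}^2$ (citing a standard result), writes $|B|$, $|B^*|=\sqrt{I-|B|^2}$, and $U^2|B|$ as integrals against $E$, and then observes that the integrand at $(\lambda,\mu)$ is $\begin{pmatrix}0&\lambda\mu\\ \sqrt{1-\lambda^2}&0\end{pmatrix}$, which lands on the unit sphere since $|\mu|=1$. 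Your route through fiberwise diagonalization of $u(\alpha,\beta)$ and pushforwards would work, but the product spectral measure collapses all of that into one line.
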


To prove this, we need the following lemma.

\begin{lem}\label{lemma for infinite structure theorem}
	Let $B\in\mathcal{B}(\mathcal{H})$ be such that $BB^*+B^*B=I$. Then there exists a unitary $U\in\mathcal{B}(\mathcal{H})$ such that $B=U\vert B\vert$.
\end{lem}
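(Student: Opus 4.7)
The plan is to start with the standard polar decomposition $B=V|B|$, where $V$ is a partial isometry with initial space $(\ker B)^\perp=\overline{\mathrm{range}}(|B|)$ and final space $\overline{\mathrm{range}}(B)=(\ker B^*)^\perp$. Thus $V$ already restricts to a unitary between the orthogonal complements of $\ker B$ and $\ker B^*$. To promote $V$ to a unitary on all of $\mathcal{H}$, it suffices to exhibit a unitary isomorphism between the remaining summands $\ker B$ and $\ker B^*$, and then set $U=V\oplus W$ with respect to $\mathcal{H}=(\ker B)^\perp\oplus\ker B=(\ker B^*)^\perp\oplus\ker B^*$.

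To produce $W$, I would exploit the hypothesis $BB^*+B^*B=I$ in two steps. First, applying the identity to $f\in\ker B$ gives $BB^*f=f$; in particular $\|B^*f\|=\|f\|$, so $B^*$ acts isometrically on $\ker B$. Symmetrically, $B^*Bg=g$ and $\|Bg\|=\|g\|$ for $g\in\ker B^*$. Second, the crucial point is to check that $B^*$ actually carries $\ker B$ into $\ker B^*$. For $f\in\ker B$, apply the identity at the vector $B^*f$:
\begin{align*}
BB^*(B^*f)+B^*B(B^*f)=B^*f.
\end{align*}
Using $BB^*f=f$, the second term equals $B^*(BB^*f)=B^*f$, so $BB^*(B^*f)=0$, which forces $B^*f\in\ker BB^*=\ker B^*$. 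Thus $W:=B^*|_{\ker B}:\ker B\to\ker B^*$ is an isometry, and the analogous argument shows $B|_{\ker B^*}$ is its two-sided inverse, making $W$ a surjective isometry.

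Finally, I would define $U$ to equal $V$ on $(\ker B)^\perp$ and $W$ on $\ker B$. By construction $U$ is unitary, and since $|B|$ vanishes on $\ker B$ we have $U|B|=V|B|=B$, giving the desired polar decomposition with unitary factor. I expect the only nontrivial step to be the verification that $B^*(\ker B)\subseteq\ker B^*$, which hinges on the single clever substitution $f\mapsto B^*f$ in the identity $BB^*+B^*B=I$; everything else is bookkeeping on orthogonal direct sums.
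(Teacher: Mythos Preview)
Your proof is correct and follows essentially the same approach as the paper: start from the polar decomposition $B=V|B|$, then use the map $B^*|_{\ker B}:\ker B\to\ker B^*$ to match the kernels and extend $V$ to a unitary. The only minor difference is that the paper stops at showing this map is injective and then argues by dimension counting that $\dim(\ker B)=\dim(\ker B^*)$ before abstractly extending, whereas you push a bit further and verify directly that $B|_{\ker B^*}$ is the inverse of $B^*|_{\ker B}$, yielding an explicit unitary $W$ to use in the extension; this is a slight cosmetic improvement but not a different argument.
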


\begin{proof}
	By polar decomposition, we write $B=V\vert B\vert$ where $V\in\mathcal{B}(\mathcal{H})$ is a partial isometry with $\text{ker}V=\text{ker}\vert B\vert$. That is, $V$ is an isometry from $\overline{\text{ran}\vert B\vert}$ to $\overline{\text{ran}B}$. Also, $V$ is surjective from $\overline{\text{ran}\vert B\vert}$ to $\overline{\text{ran}B}$.  Therefore, $V$ is unitary from $\overline{\text{ran}\vert B\vert}$ to $\overline{\text{ran}B}$. Now we claim that $\text{dim}(\overline{\text{ran}\vert B\vert}^\perp)=\text{dim}(\overline{\text{ran}B}^\perp)$, that is, $\text{dim}(\text{ker}\vert B\vert)=\text{dim}(\text{ker}B^*)$. Define $T:\text{ker}\vert B\vert\rightarrow\text{ker}B^*$ such that $Tf=B^*f$ where $f\in\text{ker}\vert B\vert$. Indeed, $T$ is well-defined. To see this, given $f\in\text{ker}\vert B\vert$, we compute
	\begin{align*}
		\Vert {B^*}^2f\Vert^2=\langle B{B^*}^2f,B^*f\rangle=\langle (I-B^*B)B^*f,B^*f\rangle=\Vert B^*f\Vert^2-\Vert BB^*f\Vert^2=\Vert f\Vert^2-\Vert f\Vert^2=0
	\end{align*}
	since $\text{ker}B=\text{ker}B^*B=\text{ker}\vert B\vert$. So $T$ is well-defined. Let $f\in\text{ker}\vert B\vert$ with $B^*f=0$. Then $\Vert f\Vert^2=\Vert Bf\Vert^2+\Vert B^*f\Vert^2=0$. Therefore, $f=0$. This implies that $T$ is injective. Hence $\text{dim}(\text{ker} B)=\text{dim}(\text{ker}\vert B\vert)\leq\text{dim}(\text{ker}B^*)$. By symmetry, we also have $\text{dim}(\text{ker}B^*)\leq \text{dim}(\text{ker} B)=\text{dim}(\text{ker}\vert B\vert)$. Therefore, $\text{dim}(\text{ker}\vert B\vert)=\text{dim}(\text{ker}B^*)$. This implies that the unitary $V|_{\overline{\text{ran}\vert B\vert}}$ can be extended to a unitary on $\mathcal{H}$, say, $U$. Also, check that $B=U\vert B\vert$. This completes the proof.
\end{proof}

Let $T\in\mathcal{B}(\mathcal{H})$ be normal. Denote $E_T$, the spectral measure corresponding to $T$.\\

\noindent\textbf{\textit{Proof of Theorem \ref{Structure theorem for infinite version}}}
	We consider 
	\begin{align*}
		\widetilde{B}=\begin{pmatrix}
			0 & B \\
			B & 0 \\
		\end{pmatrix}.
	\end{align*}
	Let $u,v\in\mathcal{H}$ with $\Vert u\Vert=\Vert v\Vert=1$. Note that $\langle\widetilde{B}\frac{u\oplus u}{\sqrt{2}},\frac{v\oplus v}{\sqrt{2}}\rangle=\langle Bu,v\rangle$. Therefore, $\widetilde{B}$ is a dilation of $B$. Now, by Lemma \ref{lemma for infinite structure theorem}, we write $B=U\vert B\vert$ where $U\in\mathcal{B}(\mathcal{H})$ is unitary. Observe that $\vert B^*\vert=(BB^*)^{\frac{1}{2}}=(U\vert B\vert^2U^*)^{\frac{1}{2}}=U\vert B\vert U^*$. Using this, we obtain
		\begin{align}\label{eqn-infinite structure}
		\begin{pmatrix}
			U&  \\
			& I \\
		\end{pmatrix}\begin{pmatrix}
			0 & B \\
			B & 0 \\
		\end{pmatrix}\begin{pmatrix}
			U^* & 0 \\
			0 & I \\
		\end{pmatrix}=\begin{pmatrix}
			0 & U^2\vert B\vert \\
			\vert B^*\vert & 0 \\
		\end{pmatrix}.
	\end{align}
	Now we prove that $U^2$ and $\vert B\vert$ commute. Since $U\vert B\vert=\vert B^*\vert U$, we have $U\vert B\vert^k=\vert B^*\vert^kU$ for all $k\geq 1$. Therefore, $Up(\vert B\vert)=p(\vert B^*\vert)U$ for all polynomial $p$. This implies that $Uf(\vert B\vert)=f(\vert B^*\vert)U$ for all continuous function $f$ supported on $\sigma(\vert B\vert)$. This gives that $Ug(\vert B\vert)=g(\vert B^*\vert)U$ for all bounded Borel measurable functions $g$ supported on $\sigma(\vert B\vert)$. In particular, we have $U\chi_{\omega}(\vert B\vert)=\chi_{\omega}(\vert B^*\vert)U$ where $\chi_{\omega}$ is a characteristic function on a Borel set $\omega$. This gives that $UE_{\vert B\vert}(\omega)=E_{\vert B\vert}(\sqrt{1-\omega^2})U$ where $\sqrt{1-\omega^2}=\{\sqrt{1-z^2}:z\in\omega\}$. Therefore, $U^2E_{\vert B\vert}(\omega)=E_{\vert B\vert}(\omega)U^2$. Hence $U^2\vert B\vert=\vert B\vert U^2$. This implies, by Theorem 4.10 \cite{SK}, that $E=E_{\vert B\vert}\times E_{U^2}$, the product of the spectral measures of $E_{\vert B\vert}$ and $E_{U^2}$,  is also a spectral measure  on the Borel $\sigma$-algebra of $\mathbb{C}^2$ (relative to $\mathcal{H}$) supported on $\sigma(\vert B\vert)\times\sigma(U^2)$. Furthermore, we have
	\small
	\begin{align*}
		\vert B\vert=\int\limits_{\mathbb{C}^2}\lambda dE(\lambda,\mu),\quad \vert B^*\vert=\int\limits_{\mathbb{C}^2}\sqrt{1-\lambda^2} dE(\lambda,\mu),\quad U^2=\int\limits_{\mathbb{C}^2}\mu dE(\lambda,\mu),\quad U^2\vert B\vert=\int\limits_{\mathbb{C}^2}\lambda\mu dE(\lambda,\mu).
	\end{align*}
	\normalsize
	Hence, by Eqn \ref{eqn-infinite structure}, we conclude that $B$ has a dilation of the form
	\begin{align*}
		\int\limits_{\vert\alpha\vert^2+\vert\beta\vert^2=1}\begin{pmatrix}
			0 & \alpha \\
			\beta & 0 \\
		\end{pmatrix}dE(\alpha,\beta).\tag*{\qed}
	\end{align*}

\begin{rmk}
	It should be noted that the proof of Theorem \ref{Structure theorem for infinite version} remains valid whenever $\mathcal{H}$ is finite-dimensional. However, the proof of Theorem \ref{Structure of a dilation of B satisfying BB*+B*B=I} provides an alternative approach of the same.
\end{rmk}

An intriguing question that naturally arises at this stage is the following:

\begin{que}\label{dilation question}
	Does every $B\in\mathbb{M}_n$ satisfying $BB^*+B^*B\leq I_n$ admit a dilation $\widetilde{B}$ such that $\widetilde{B}\widetilde{B}^*+\widetilde{B}^*\widetilde{B}=I$?
\end{que}

We now investigate Question \ref{dilation question}. The following theorem establishes that this question has an affirmative answer for $n=2$.

 \begin{thm}\label{dilation for M2}
 	Let $B\in\mathbb{M}_2$ be such that $BB^*+B^*B\leq I_2$. Then $B$ has a dilation $\widetilde{B}\in\mathbb{M}_6$ such that $\widetilde{B}\widetilde{B}^*+\widetilde{B}^*\widetilde{B}=I$.
 \end{thm}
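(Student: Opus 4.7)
My plan is to reduce Theorem~\ref{dilation for M2} to Theorem~\ref{Structure of a dilation of B satisfying BB*+B*B=I} by first constructing an intermediate dilation $B' \in \mathbb{M}_3$ satisfying $B'(B')^* + (B')^*B' = I_3$. If such a $B'$ exists, then the dilation $\widetilde{B} := \begin{pmatrix} 0 & B' \\ B' & 0 \end{pmatrix} \in \mathbb{M}_6$ produced in the proof of Theorem~\ref{Structure of a dilation of B satisfying BB*+B*B=I} satisfies $\widetilde{B}\widetilde{B}^* + \widetilde{B}^*\widetilde{B} = I_6$, and since the composition of dilations is a dilation, $\widetilde{B}$ also dilates $B$.

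To construct $B'$, I write $B' = \begin{pmatrix} B & x \\ y^* & z \end{pmatrix}$ with $x, y \in \mathbb{C}^2$ and $z \in \mathbb{C}$ to be determined, and expand $B'(B')^* + (B')^*B' = I_3$ block-wise. The $(1,1)$ block gives $xx^* + yy^* = \Delta^2$, the $(2,2)$ entry gives $\|x\|^2 + \|y\|^2 + 2|z|^2 = 1$, and the off-diagonal entry gives $(B + zI_2)y + (B^* + \bar{z}I_2)x = 0$, where $\Delta^2 := I_2 - BB^* - B^*B \geq 0$ by hypothesis. Taking the trace of the first equation and substituting into the second forces
\[
|z|^2 = \mathrm{tr}(B^*B) - \tfrac{1}{2},
\]
so this route is feasible precisely when $\mathrm{tr}(B^*B) \geq \tfrac{1}{2}$. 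In that regime, I would diagonalize $\Delta^2 = U\,\mathrm{diag}(d_1^2,d_2^2)\,U^*$, parametrize $x = U\,\mathrm{diag}(d_1,d_2)(\alpha_1,\alpha_2)^T$ and $y = U\,\mathrm{diag}(d_1,d_2)(\beta_1,\beta_2)^T$ with $|\alpha_j|^2 + |\beta_j|^2 = 1$, and tune the remaining phases (in $\alpha_j,\beta_j$ and in $z$) so that the off-diagonal compatibility $(B+zI_2)y = -(B^* + \bar zI_2)x$ holds.

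When $\mathrm{tr}(B^*B) < \tfrac{1}{2}$, the intermediate $\mathbb{M}_3$ step is unavailable (since $|z|^2$ would be negative), and I would instead produce $\widetilde{B}$ directly in $\mathbb{M}_6$ by setting $\widetilde{B} = \widetilde{B}_1 \oplus J_2$ with $\widetilde{B}_1 \in \mathbb{M}_4$ a dilation of $B$ satisfying $\widetilde{B}_1\widetilde{B}_1^* + \widetilde{B}_1^*\widetilde{B}_1 = I_4$. Writing $\widetilde{B}_1 = \begin{pmatrix} B & X \\ Y & Z \end{pmatrix}$, the diagonal block equations become $XX^* + Y^*Y = \Delta^2$ and $YY^* + X^*X + ZZ^* + Z^*Z = I_2$, whose trace counterparts (forcing $\mathrm{tr}(Z^*Z) = \mathrm{tr}(B^*B)$) are now always nonnegative---the obstruction from the $\mathbb{M}_3$ case disappears when there are two extra dimensions to distribute the defect across, rather than one.

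The principal obstacle is the off-diagonal compatibility, which in the second case takes the form $BY^* + XZ^* + B^*X + Y^*Z = 0$ and is algebraically delicate. The right ansatz is likely guided by the polar decomposition $B = U|B|$ together with $|B^*| = U|B|U^*$, using $U$ to couple the row and column constraints: one splits $\Delta^2$ between $XX^*$ and $Y^*Y$ along the joint spectral resolution of $\Delta$ and $|B|$, then reads off $Z$ from the off-diagonal equation. Verifying the resulting identities is a computation; discovering the correct splitting is the nontrivial step, and the dimension count (comparing the free parameters in $X, Y, Z$ to the four constraints from the $(1,1)$ and $(2,2)$ blocks and the off-diagonal matrix equation) confirms there is sufficient room for a solution when $BB^* + B^*B \leq I_2$.
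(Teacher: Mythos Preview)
Your reduction to an intermediate $B'\in\mathbb{M}_3$ with $B'(B')^*+(B')^*B'=I_3$ has a genuine error: the trace condition $\mathrm{tr}(B^*B)\ge\tfrac12$ is necessary for such a $B'$ to exist, but it is not sufficient, so your case split is wrong. Take $B=\tfrac12 I_2$. Then $\mathrm{tr}(B^*B)=\tfrac12$, so your scheme routes this $B$ to the $\mathbb{M}_3$ step. Writing $B'=\begin{pmatrix}B&x\\ y^*&z\end{pmatrix}$ and imposing $B'(B')^*+(B')^*B'=I_3$, the $(3,3)$ entry forces $z=0$ (as you observed), and then the off-diagonal equation $(B+zI_2)y+(B^*+\bar z I_2)x=0$ becomes $\tfrac12(x+y)=0$, i.e.\ $y=-x$. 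But then the $(1,1)$ block demands $xx^*+yy^*=2xx^*=\Delta^2=\tfrac12 I_2$, which is impossible since $xx^*$ has rank at most one. So no $\mathbb{M}_3$ dilation exists for this $B$, and your first branch fails on an example it was supposed to cover. More broadly, a dimension count (``there is sufficient room for a solution'') is never a proof of solvability for a nonlinear system; here it is actually misleading.

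The paper's proof takes a different and more concrete route. It first brings $B$ to a canonical form by diagonalizing $BB^*+B^*B$ (Lemma~\ref{lemma1 for dilation for M2}): either $B$ is trace-zero, $B=\begin{pmatrix}\alpha&\beta\\ \gamma&-\alpha\end{pmatrix}$, or after a scalar rotation $B=\begin{pmatrix}\alpha&\beta\\ -\bar\beta&\delta\end{pmatrix}$ with $\alpha+\delta\in\mathbb{R}$. In the second case an explicit $4\times4$ dilation is written down (Lemma~\ref{lemma2 for dilation for M2}); in the trace-zero case an explicit $6\times6$ dilation is written down (Lemma~\ref{lemma3 for dilation for M2}), using the eigenvectors of $\Re(B)$ to populate the off-diagonal columns. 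The case split is algebraic (trace-zero or not), not metric ($\mathrm{tr}(B^*B)\ge\tfrac12$ or not), and in each case the construction is exhibited rather than inferred from parameter counts. Note in particular that your counterexample $B=\tfrac12 I_2$ falls into the paper's second case and is handled by the $\mathbb{M}_4$ construction.
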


 To prove this, we need the following lemmas.

 \begin{lem}\label{lemma1 for dilation for M2}
 	Let $B\in\mathbb{M}_2$. Then either $B=\begin{pmatrix}
 		\alpha & \beta \\
 		\gamma & -\alpha \\
 	\end{pmatrix},$ or$,$ there exists $\theta\in[0,2\pi)$ such that $$e^{i\theta}B= 		\begin{pmatrix}
 	\alpha & \beta \\
 	-\overline{\beta} & \delta \\
 	\end{pmatrix}$$ where $\alpha,\beta,\gamma,\delta\in\mathbb{C}$ with $\alpha+\delta\in\mathbb{R}$.
 \end{lem}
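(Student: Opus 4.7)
The plan is a case analysis on whether $\operatorname{tr}(B) = 0$. Writing $B = \begin{pmatrix} a & b \\ c & d \end{pmatrix}$, if $a + d = 0$, I simply set $\alpha := a$, $\beta := b$, $\gamma := c$; then $d = -\alpha$ and the first stated form is immediate.

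If $a + d \neq 0$, the goal is to find a single phase $\theta \in [0, 2\pi)$ meeting two requirements on $e^{i\theta} B$: (i) the trace is real, i.e.\ $e^{i\theta}(a+d) \in \mathbb{R}$, and (ii) the off-diagonals are anti-Hermitian conjugates, i.e.\ $e^{2i\theta} c = -\overline{b}$ (equivalently $e^{i\theta} c = -\overline{e^{i\theta} b}$). Requirement (i) pins $\theta$ modulo $\pi$ as $\theta \equiv -\arg(a+d)$, and requirement (ii), when $c \neq 0$, forces $|b| = |c|$ and then pins $\theta$ modulo $\pi$ via $2\theta \equiv \arg(-\overline{b}/c)$; the degenerate subcases where $b=0$ or $c=0$ (or both) reduce to an essentially diagonal matrix after rotation and can be handled directly by choosing $\theta$ to realign the trace. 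With such a $\theta$ fixed, setting $\alpha := e^{i\theta} a$, $\beta := e^{i\theta} b$, $\delta := e^{i\theta} d$ produces the second stated form, since $\alpha + \delta = e^{i\theta}(a+d) \in \mathbb{R}$ by (i) and the lower-left entry equals $-\overline{\beta}$ by (ii).

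The decisive step, and the main obstacle, is the simultaneous solvability of (i) and (ii) by a common $\theta$. This reduces to the phase identity $\arg(-\overline{b}/c) \equiv -2\arg(a+d) \pmod{2\pi}$ together with $|b| = |c|$, which is not a feature of every $2 \times 2$ matrix in isolation. The structural input that unlocks the compatibility is the relation
\[
\overline{c}(a+d) + b\,\overline{(a+d)} = 0,
\]
which is exactly the vanishing of the $(1,2)$-entry of $BB^* + B^*B$; in the ambient setting of Section~\ref{dilation}, where the hypothesis $BB^* + B^*B = I_2$ is available, this relation holds and yields $|c/b| = |\overline{s}/s| = 1$ together with $\arg(\overline{c}/b) = \pi - 2\arg(a+d)$, from which both the modulus condition and the phase identity follow at once. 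Once this is in place, the remainder of the proof is routine bookkeeping: assemble $\theta$ from the shared modular class and read off $\alpha, \beta, \gamma, \delta$ from the entries of $e^{i\theta} B$.
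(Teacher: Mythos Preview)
You have correctly isolated the computational core: the relation $(a+d)\overline{c} + \overline{(a+d)}\,b = 0$ is exactly the vanishing of the off-diagonal entry of $BB^* + B^*B$, and once it holds, any $\theta$ making $e^{i\theta}(a+d)$ real automatically forces $e^{i\theta}c = -\overline{e^{i\theta}b}$. But your justification for this relation has a genuine gap. You appeal to an ``ambient'' hypothesis $BB^* + B^*B = I_2$, yet (i) the lemma as stated carries no such constraint, and (ii) the actual ambient hypothesis in Theorem~\ref{dilation for M2} is only $BB^* + B^*B \leq I_2$, which does \emph{not} force the off-diagonal of $BB^* + B^*B$ to vanish. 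For instance, take $B = \epsilon\bigl(\begin{smallmatrix} 1 & 1 \\ 0 & 1 \end{smallmatrix}\bigr)$ with $\epsilon>0$ small: then $BB^* + B^*B \leq I_2$, the trace is nonzero, and the $(2,1)$-entry of $e^{i\theta}B$ is identically $0$ while $-\overline{e^{i\theta}\epsilon}$ never is, so neither alternative of the lemma can hold in the standard basis.

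The missing idea---and this is what the paper does---is to \emph{first pass to an orthonormal basis diagonalizing $BB^* + B^*B$}. Since $BB^* + B^*B$ is self-adjoint, such a basis exists for every $B \in \mathbb{M}_2$, and in that basis the key relation holds for free; the rest of the argument is then exactly your computation. The lemma is thus implicitly a statement up to unitary similarity, which is harmless for its application in Lemmas~\ref{lemma2 for dilation for M2} and~\ref{lemma3 for dilation for M2} because both the hypothesis $BB^* + B^*B \leq I_2$ and the conclusion (existence of a dilation with $\widetilde{B}\widetilde{B}^* + \widetilde{B}^*\widetilde{B} = I$) are unitarily invariant. Insert this basis change at the outset and your proof coincides with the paper's.
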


 \begin{proof}
 	Let $\{e_1,e_2\}$ be an orthonormal basis such that $BB^*+B^*B$ is diagonal. With respect to this orthonormal basis $\{e_1,e_2\}$, we write $	B= \begin{pmatrix}
 		\alpha & \beta \\
 		\gamma & \delta \\
 	\end{pmatrix}$. Then
 	\begin{align*}
 		BB^*+B^*B=\begin{pmatrix}
 			2\vert\alpha\vert^2+\vert\beta\vert^2+\vert\gamma\vert^2 & (\alpha+\delta)\overline{\gamma}+\overline{\alpha+\delta}\beta \\
 			\overline{\alpha+\delta}\gamma+(\alpha+\delta)\overline{\beta} & \vert\beta\vert^2+\vert\gamma\vert^2+2\vert\delta\vert^2 \\
 		\end{pmatrix}.
 	\end{align*}
 	So $(\alpha+\delta)\overline{\gamma}+\overline{\alpha+\delta}\beta=0$. If $\alpha+\delta=0$ then $B$ is of the first form. Let $\alpha+\delta\neq 0$. Then there exists $\theta\in[0,2\pi)$ such that $e^{i\theta}(\alpha+\delta)\in\mathbb{R}$. This implies that $e^{i\theta}\gamma=-\overline{e^{i\theta}\beta}$. Therefore, $e^{i\theta}B$ is of the desired form. This completes the proof.
  \end{proof}

  \begin{lem}\label{lemma2 for dilation for M2}
  	Let $B=\begin{pmatrix}
  		\alpha & \beta \\
  		-\overline{\beta} & \delta \\
  	\end{pmatrix}\in\mathbb{M}_2$ be such that $BB^*+B^*B\leq I_2$ where $\alpha,\beta,\delta\in\mathbb{C}$. Then $B$ has a dilation $\widetilde{B}\in\mathbb{M}_4$ such that $\widetilde{B}\widetilde{B}^*+\widetilde{B}^*\widetilde{B}=I_4$.
  \end{lem}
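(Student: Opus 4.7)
The plan is to seek a dilation in the form
\[
\widetilde B = \begin{pmatrix} B & -D \\ D & -B \end{pmatrix}\in\mathbb{M}_4
\]
for some $2\times 2$ matrix $D$ to be chosen. Taking $\mathcal{H}=\mathbb{C}^2\oplus 0$ immediately shows $\widetilde B$ is a dilation of $B$ irrespective of $D$, so the entire task reduces to choosing $D$ that enforces $\widetilde B\widetilde B^*+\widetilde B^*\widetilde B=I_4$. A block-wise expansion of this identity collapses to the coupled system
\begin{equation*}
(\ast)\qquad DD^*+D^*D=C, \qquad BD^*+DB^*=B^*D+D^*B,
\end{equation*}
where $C:=I_2-BB^*-B^*B\ge 0$ by hypothesis. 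So the proof reduces to solving $(\ast)$.

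The decisive structural input I would use is that, for $B=\begin{pmatrix}\alpha&\beta\\-\overline{\beta}&\delta\end{pmatrix}$ with $\alpha+\delta\in\mathbb{R}$ (inherited from Lemma \ref{lemma1 for dilation for M2}), the off-diagonal entries of $BB^*+B^*B$ both reduce to a scalar multiple of $\overline{\alpha+\delta}-(\alpha+\delta)=-2i\,\Im(\alpha+\delta)$, and hence vanish. Thus $C$ is already diagonal: $C=\text{diag}(c_1,c_2)$ with $c_1=1-2(|\alpha|^2+|\beta|^2)\ge 0$ and $c_2=1-2(|\beta|^2+|\delta|^2)\ge 0$. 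This diagonality motivates the ansatz $D=\text{diag}(d_1,d_2)$, under which the first equation of $(\ast)$ reduces to the scalar constraints $|d_j|^2=c_j/2$ for $j=1,2$, leaving the arguments of $d_1$ and $d_2$ free.

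The main (and only) obstacle is the second equation of $(\ast)$, and choosing the phases of $d_1,d_2$ is exactly the tool to handle it. A direct computation shows that for a diagonal $D$ the matrix $BD^*+DB^*-(B^*D+D^*B)$ has vanishing diagonal entries and $(1,2)$-entry equal to $2\beta(\Re d_2-\Re d_1)$, with the $(2,1)$-entry its complex conjugate. Setting $d_1=i\sqrt{c_1/2}$ and $d_2=i\sqrt{c_2/2}$ makes $\Re d_1=\Re d_2=0$, so the second equation of $(\ast)$ holds as well. Plugging this $D$ into $\widetilde B$ yields the required $4\times 4$ dilation with $\widetilde B\widetilde B^*+\widetilde B^*\widetilde B=I_4$.
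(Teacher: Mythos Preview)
Your proof is correct, but it uses a genuinely different dilation from the paper's. The paper takes the ansatz
\[
\widetilde{B}=\begin{pmatrix} B & X \\ X^* & R \end{pmatrix},\qquad X=\mathrm{diag}(x_1,x_2),\ R=\mathrm{diag}(r_1,r_2),
\]
and then tunes the moduli of $x_1,x_2$ and both the moduli and real parts of $r_1,r_2$ to force $\widetilde{B}\widetilde{B}^*+\widetilde{B}^*\widetilde{B}=I_4$. You instead take the more symmetric block form $\widetilde{B}=\begin{pmatrix} B & -D \\ D & -B \end{pmatrix}$, which immediately makes the two diagonal blocks of $\widetilde{B}\widetilde{B}^*+\widetilde{B}^*\widetilde{B}$ equal and reduces the problem to a single $2\times 2$ unknown $D$. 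Your observation that a purely imaginary diagonal $D$ kills the off-diagonal obstruction $2\beta(\Re d_2-\Re d_1)$ is clean; by contrast the paper carries four free parameters in $R$ and must impose $\Re(r_1)=-\Re(\alpha)$, $\Re(r_2)=-\Re(\delta)$ separately. Both arguments rely on the same hidden hypothesis, namely $\alpha+\delta\in\mathbb{R}$ from the preceding lemma (without it $C=I_2-BB^*-B^*B$ is not diagonal and neither construction goes through with a diagonal off-diagonal block); you state this explicitly, which is a useful clarification. Your route is a little more economical; the paper's has the minor advantage that its $(2,2)$ block stays diagonal, but that plays no role downstream.
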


  \begin{proof}
  	We consider
  	\begin{align*}
  		\widetilde{B}=\begin{pmatrix}
  			\alpha & \beta & x_1 & 0\\
  			-\overline{\beta} & \delta & 0 & x_2\\
  			 \overline{x_1} & 0 & r_1 & 0\\
  			 0 & \overline{x_2} & 0 & r_2\\
  		\end{pmatrix}
  	\end{align*}
  	where $x_j,r_j\in\mathbb{C}$ for all $j$ such that $\vert x_1\vert=\sqrt{\frac{1}{2}-\vert\alpha\vert^2-\vert\beta\vert^2},\; \vert x_2\vert=\sqrt{\frac{1}{2}-\vert\beta\vert^2-\vert\delta\vert^2},\; \Re{(r_1)}=-\Re{(\alpha)},\; \vert r_1\vert=\sqrt{\vert\alpha\vert^2+\vert\beta\vert^2},\; \Re{(r_2)}=-\Re{(\delta)}, \text{ and }\vert r_2\vert=\sqrt{\vert\beta\vert^2+\vert\delta\vert^2}$. Indeed, $\widetilde{B}$ is a dilation of $B$. Check that $\widetilde{B}\widetilde{B}^*+\widetilde{B}^*\widetilde{B}=I_4$. Therefore, $\widetilde{B}\in\mathbb{M}_4$ is a dilation of $B$ satisfying $\widetilde{B}\widetilde{B}^*+\widetilde{B}^*\widetilde{B}=I_4$. This completes the proof.
  \end{proof}

  \begin{lem}\label{lemma3 for dilation for M2}
  	  	Let $B=\begin{pmatrix}
  	  		\alpha & \beta \\
  	  		\gamma & -\alpha \\
  	  	\end{pmatrix}\in\mathbb{M}_2$ be such that $BB^*+B^*B\leq I_2$ where $\alpha,\beta,\gamma\in\mathbb{C}$. Then $B$ has a dilation $\widetilde{B}\in\mathbb{M}_6$ such that $\widetilde{B}\widetilde{B}^*+\widetilde{B}^*\widetilde{B}=I_6$.
  \end{lem}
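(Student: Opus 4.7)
The plan is to exploit the trace-zero structure of $B$ to construct an explicit $4\times 4$ dilation $\widetilde{B}_0$ that already satisfies the identity, and then pad to a $6\times 6$ dilation by taking direct sum with a diagonal $2\times 2$ block. Because $\mathrm{tr}\,B=0$, a direct computation shows that the off-diagonal entries of $BB^*+B^*B$ cancel, giving $BB^*+B^*B=\rho\,I_2$ with $\rho:=2|\alpha|^2+|\beta|^2+|\gamma|^2$, so the hypothesis collapses to the scalar inequality $\rho\le 1$. The Hermitian matrix $B+B^*$ also has trace zero, so its eigenvalues are $\pm\mu$ where $\mu:=\sqrt{4\,\mathrm{Re}(\alpha)^2+|\beta+\overline{\gamma}|^2}$; an elementary AM--GM calculation gives $|\beta+\overline{\gamma}|^2\le 2(|\beta|^2+|\gamma|^2)$, hence the crucial bound $\mu^2\le 2\rho$.

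Let $v_1,v_2$ be orthonormal eigenvectors of $B+B^*$ for $+\mu$ and $-\mu$ respectively, put $c_j:=\sqrt{(1-\rho)/2}\,v_j\in\mathbb{C}^2$, and pick $e_1,e_2\in\mathbb{C}$ with $\mathrm{Re}(e_1)=-\mu/2$, $\mathrm{Re}(e_2)=\mu/2$ and $|e_1|^2=|e_2|^2=\rho/2$; such $e_j$ exist precisely because $\mu^2/4\le \rho/2$. Define
$$\widetilde{B}_0=\begin{pmatrix} B & c_1 & c_2 \\ c_1^* & e_1 & 0 \\ c_2^* & 0 & e_2 \end{pmatrix}\in\mathbb{M}_4,\qquad \widetilde{B}:=\widetilde{B}_0\oplus\mathrm{diag}\bigl(\tfrac{1}{\sqrt{2}},\tfrac{1}{\sqrt{2}}\bigr)\in\mathbb{M}_6.$$
The top-left $2\times 2$ block of $\widetilde{B}$ is exactly $B$, so $\widetilde{B}$ is a dilation of $B$.

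The remaining task is to verify $\widetilde{B}\widetilde{B}^*+\widetilde{B}^*\widetilde{B}=I_6$; since the padding block contributes $I_2$ trivially, it suffices to check $\widetilde{B}_0\widetilde{B}_0^*+\widetilde{B}_0^*\widetilde{B}_0=I_4$ entry by entry. The diagonal entries $(1,1),(2,2)$ reduce to $\rho+(1-\rho)\cdot 1=1$ using $|v_1^{(i)}|^2+|v_2^{(i)}|^2=1$; the $(1,2)$-entry equals $2(c_1c_1^*+c_2c_2^*)_{12}=0$ by the completeness identity $v_1v_1^*+v_2v_2^*=I_2$; the entries $(3,3),(4,4)$ give $\|c_j\|^2+|e_j|^2=\tfrac{1-\rho}{2}+\tfrac{\rho}{2}=\tfrac{1}{2}$, summing to $1$; the $(3,4)$-entry equals $2\langle c_1,c_2\rangle=0$. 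Finally each cross-entry $(i,k)$ with $i\in\{1,2\}$ and $k\in\{3,4\}$ reduces to $\bigl((B+B^*)c_{k-2}+2\,\mathrm{Re}(e_{k-2})c_{k-2}\bigr)^{(i)}$, which vanishes by the eigenvector relation $(B+B^*)c_j=\pm\mu\, c_j$ combined with $\mathrm{Re}(e_j)=\mp\mu/2$. The main non-trivial ingredients are the spectral identification of the $c_j$ as eigenvectors of $B+B^*$ and the bound $\mu^2\le 2\rho$; the rest is routine block-matrix arithmetic.
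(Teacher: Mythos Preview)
Your proof is correct. Both your argument and the paper's hinge on the same two ingredients: taking eigenvectors of $B+B^*$ (equivalently $\Re(B)$) to kill the off-diagonal cross-terms, and the inequality $\mu^2\le 2\rho$ (in the paper's notation, $|\lambda_j|^2\le|\alpha|^2+\tfrac{|\beta|^2}{2}+\tfrac{|\gamma|^2}{2}$) to guarantee the diagonal scalars $e_j$ exist. Where you diverge is in recognizing explicitly that $\mathrm{tr}\,B=0$ forces $BB^*+B^*B=\rho I_2$ to be scalar; this collapses the hypothesis to the single inequality $\rho\le 1$ and lets you get away with a symmetric $4\times 4$ dilation $\widetilde{B}_0$, which you then pad trivially to reach $\mathbb{M}_6$. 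The paper does not isolate this scalar structure and instead builds a genuinely $6\times 6$ block matrix with two columns proportional to one eigenvector, one column proportional to the other, an extra auxiliary column, and several more parameters $t_i,z_j,w_k$ to tune. Your route is shorter and in fact proves slightly more (a $4\times 4$ dilation already works), while the paper's construction, though bulkier, avoids the direct-sum padding and lands directly in $\mathbb{M}_6$.
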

  
  \begin{proof}
  	Let $x,y$ be orthonormal eigen vectors of $\Re{(B)}$ corresponding to eigen values $\lambda_1,\lambda_2$ respectively. Since $\Vert B+B^*\Vert^2\leq 2\Vert BB^*+B^*B\Vert$, we have $\vert\lambda_j\vert^2\leq\vert\alpha\vert^2+\frac{\vert \beta\vert^2}{2}+\frac{\vert\gamma\vert^2}{2}$ for all $j$. Consider
  	  	\begin{align*}
  		\widetilde{B}=\begin{pmatrix}
  			B & t_1x & t_2x & t_3y & 0\\
  			\overline{t_1}x^* & w_1 & 0 & 0& z_1\\
  			\overline{t_2}x^* & 0 & w_2 &0 & z_2\\
  			\overline{t_3}y^* & 0 & 0 & w_3 & 0\\
  			0 & \overline{z_1} & \overline{z_2} & 0 & w_4\\
  		\end{pmatrix}
  	\end{align*}
  	where $t_i,z_j,w_k\in\mathbb{C}$ for all $i,j,k$ such that $\vert t_3\vert^2=\vert t_1\vert^2+\vert t_2\vert^2=\frac{1}{2}-\vert\alpha\vert^2-\frac{\vert\beta\vert^2}{2}-\frac{\vert\gamma\vert^2}{2},\; z_1=t_2,\; z_2=-t_1,\; \Re{(w_1)}=\Re{(w_2)}=-\Re{(w_4)}=-\lambda_1,\; \Re{(w_3)}=-\lambda_2$, and $\vert w_1\vert^2=\vert w_2\vert^2=\vert w_3\vert^2=\vert w_4\vert^2=\vert\alpha\vert^2+\frac{\vert \beta\vert^2}{2}+\frac{\vert\gamma\vert^2}{2}$. Indeed, $\widetilde{B}$ is a dilation of $B$. Check that $\widetilde{B}\widetilde{B}^*+\widetilde{B}^*\widetilde{B}=I_6$. Therefore, $\widetilde{B}\in\mathbb{M}_6$ is a dilation of $B$ satisfying $\widetilde{B}\widetilde{B}^*+\widetilde{B}^*\widetilde{B}=I_6$.
  \end{proof}

  \begin{proof}[\textbf{Proof of Theorem \ref{dilation for M2}}]
  	The proof follows from Lemma \ref{lemma1 for dilation for M2}, \ref{lemma2 for dilation for M2} and \ref{lemma3 for dilation for M2}.
  \end{proof}

We now present two examples demonstrating that Question \ref{dilation question} has a negative answer in general for $n\geq 3$. The first example involves a weighted permutation matrix, whereas the second is of a different nature.

\begin{eg}\label{counter example 1}
	We consider
	\begin{align*}
		B=\begin{pmatrix}
			0 & 0 & \frac{1}{\sqrt{3}}\\
			\sqrt{\frac{2}{3}} & 0 & 0\\
			       0 & \frac{1}{\sqrt{3}} & 0\\
		\end{pmatrix}.
	\end{align*}
	Then $BB^*+B^*B=\text{diag}(1,1,\frac{2}{3})$. We claim that $B$ does not have any dilation $\widetilde{B}\in\mathbb{M}_n$ for which $\widetilde{B}\widetilde{B}^*+\widetilde{B}^*\widetilde{B}=I_n$. If possible let, $B$ has a dilation $\widetilde{B}\in\mathbb{M}_4$ such that $\widetilde{B}\widetilde{B}^*+\widetilde{B}^*\widetilde{B}=I_4$. Then $\widetilde{B}$ has the following form
	\begin{align*}
		\widetilde{B}=\begin{pmatrix}
			0 & 0 & \frac{1}{\sqrt{3}} & 0\\
			\sqrt{\frac{2}{3}} & 0 & 0 & 0\\
			0 & \frac{1}{\sqrt{3}} & 0 & \alpha\\
			0 & 0 & \overline{\beta} & \delta\\
		\end{pmatrix}.
	\end{align*} 
	Now we compute
	\begin{align}\label{eq_counter example1}
		\widetilde{B}\widetilde{B}^*+\widetilde{B}^*\widetilde{B}=\begin{pmatrix}
			1 & 0 & 0 & \frac{\beta}{\sqrt{3}}\\
			0 & 1 & 0 & \frac{\alpha}{\sqrt{3}}\\
			0 & 0 & \frac{2}{3}+\vert\alpha\vert^2+\vert\beta\vert^2 & \alpha\overline{\delta}+\beta\delta\\
			\frac{\overline{\beta}}{\sqrt{3}} & \frac{\overline{\alpha}}{\sqrt{3}} & \overline{\alpha}\delta+\overline{\beta\delta}& \vert\alpha\vert^2+\vert\beta\vert^2+2\vert\delta\vert^2\\
		\end{pmatrix}=I_4.
	\end{align}
	This gives $\alpha=\beta=0$, and hence equating $(3,3)$-entry of Eqn \ref{eq_counter example1}, we get $\frac{2}{3}=1$. This is a contradiction. Therefore, $B$ does not have a dilation $\widetilde{B}\in\mathbb{M}_4$ such that $\widetilde{B}\widetilde{B}^*+\widetilde{B}^*\widetilde{B}=I_4$. An analogous argument shows that $B$ does not have a dilation $\widetilde{B}\in\mathbb{M}_n$ for which $\widetilde{B}\widetilde{B}^*+\widetilde{B}^*\widetilde{B}=I_n$.
\end{eg}

\begin{eg}\label{counter example 2}
	Let us consider
	\begin{align*}
		B=\begin{pmatrix}
			\frac{i}{\sqrt{3}} & 0 & \frac{1}{\sqrt{6}}\\
			 0 & \frac{1}{\sqrt{3}} & \frac{i}{\sqrt{6}}\\
			\frac{1}{\sqrt{6}} & \frac{i}{\sqrt{6}} & 0\\
		\end{pmatrix}.
	\end{align*}
	Then $BB^*+B^*B=\text{diag}(1,1,\frac{2}{3})$. We claim that $B$ does not have any dilation $\widetilde{B}\in\mathbb{M}_n$ for which $\widetilde{B}\widetilde{B}^*+\widetilde{B}^*\widetilde{B}=I_n$. If possible let, $B$ has a dilation $\widetilde{B}\in\mathbb{M}_4$ such that $\widetilde{B}\widetilde{B}^*+\widetilde{B}^*\widetilde{B}=I_4$. Then $\widetilde{B}$ has the following form
	\begin{align*}
		\widetilde{B}=\begin{pmatrix}
			\frac{i}{\sqrt{3}} & 0 & \frac{1}{\sqrt{6}} & 0\\
			0 & \frac{1}{\sqrt{3}} & \frac{i}{\sqrt{6}} & 0\\
			\frac{1}{\sqrt{6}} & \frac{i}{\sqrt{6}} & 0 & \alpha\\
			0 & 0 & \overline{\beta} & \delta\\
		\end{pmatrix}.
	\end{align*} 
	Now we compute
	\begin{align}\label{eq_counter example2}
		\widetilde{B}\widetilde{B}^*+\widetilde{B}^*\widetilde{B}=\begin{pmatrix}
			1 & 0 & 0 & \frac{\beta+\alpha}{\sqrt{6}}\\
			0 & 1 & 0 & \frac{i(\beta-\alpha)}{\sqrt{6}}\\
			0 & 0 & \frac{2}{3}+\vert\alpha\vert^2+\vert\beta\vert^2 & \alpha\overline{\delta}+\beta\delta\\
			\frac{\overline{\beta+\alpha}}{\sqrt{6}} & -\frac{i\overline{\beta-\alpha}}{\sqrt{6}} & \overline{\alpha}\delta+\overline{\beta\delta}& \vert\alpha\vert^2+\vert\beta\vert^2+2\vert\delta\vert^2\\
		\end{pmatrix}=I_4.
	\end{align}
		This gives $\alpha=\beta=0$, and hence equating $(3,3)$-entry of Eqn \ref{eq_counter example2}, we get $\frac{2}{3}=1$. This is a contradiction. Therefore, $B$ does not have a dilation $\widetilde{B}\in\mathbb{M}_4$ such that $\widetilde{B}\widetilde{B}^*+\widetilde{B}^*\widetilde{B}=I_4$. An analogous argument shows that $B$ does not have a dilation $\widetilde{B}\in\mathbb{M}_n$ for which $\widetilde{B}\widetilde{B}^*+\widetilde{B}^*\widetilde{B}=I_n$.
\end{eg}

The examples given in Example \ref{counter example 1}, \ref{counter example 2} are extreme points of the convex set $\{B\in\mathbb{M}_3: BB^*+B^*B\leq I_3\}$. The following theorem characterizes the extreme points of the convex set $\mathcal{S}=\{B\in\mathbb{M}_n: BB^*+B^*B\leq I_n\}$. Denote $(\mathbb{M}_n)_{\text{s.a.}}$, the set of all self-adjoint matrices in $\mathbb{M}_n$ and $\mathcal{E}(\mathcal{S})$, the set of all extreme points of the convex set $\mathcal{S}$.  Let $B\in\mathbb{M}_n$. Take $B=B_1+iB_2$ where $B_1,B_2\in(\mathbb{M}_n)_{\text{s.a.}}$. Check that $BB^*+B^*B=2(B_1^2+B_2^2)$. Therefore, characterizing the extreme points of $\mathcal{S}$ is equivalent to characterizing the extreme points of the set $\left\{(B_1, B_2): B_1, B_2\in(\mathbb{M}_n)_{\text{s.a.}}, B_1^2+B_2^2\leq I_n\right\}$.

\begin{thm}\label{extreme point}
	Let $\mathcal{S}=\left\{(B_1, B_2): B_1, B_2\in(\mathbb{M}_n)_{\text{s.a.}}, B_1^2+B_2^2\leq I_n\right\}$. Then $\mathcal{E}(\mathcal{S})=\mathcal{E}_1(\mathcal{S})\cup\mathcal{E}_2(\mathcal{S})$ upto unitary similarity where $\mathcal{E}_1(S)=\{(B_1,B_2): B_1, B_2\in(\mathbb{M}_n)_{\text{s.a.}}, B_1^2+B_2^2=I_n\}$ and

	\resizebox{.97\linewidth}{!}{
		\begin{minipage}{\linewidth}
			\begin{align*}
				\mathcal{E}_2(S)&=\left\{(B_1,B_2): B_1=\begin{pmatrix}
					* & X_1 \\
					* & * \\
				\end{pmatrix},\; B_2=\begin{pmatrix}
				* & X_2 \\
				* & * \\
				\end{pmatrix}\in(\mathbb{M}_n)_{\text{s.a.}},\; B_1^2+B_2^2=\begin{pmatrix}
				I_{n-k} &  0\\
				0   & R_k \\
				\end{pmatrix},\;  \text{rank}(X_1|X_2)=2k,\; 0<R_k<I_k, \; 1\leq k\leq n\right\}.
			\end{align*}
		\end{minipage}
	}
\end{thm}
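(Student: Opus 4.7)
The approach is to establish the set equality $\mathcal{E}(\mathcal{S}) = \mathcal{E}_1(\mathcal{S}) \cup \mathcal{E}_2(\mathcal{S})$ (up to unitary similarity) by proving both inclusions. The key identity underlying the argument is that, for any convex decomposition $(B_1, B_2) = \tfrac{1}{2}(A_1, A_2) + \tfrac{1}{2}(C_1, C_2)$ with both pairs in $\mathcal{S}$, writing $D_i = \tfrac{1}{2}(A_i - C_i)$ (which is self-adjoint), one has
\[
(B_1^2 + B_2^2) + (D_1^2 + D_2^2) = \tfrac{1}{2}(A_1^2 + A_2^2) + \tfrac{1}{2}(C_1^2 + C_2^2) \leq I_n.
\]
This rigidly bounds how much room $D_1, D_2$ have, depending on the structure of $B_1^2 + B_2^2$.

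For $\mathcal{E}_1(\mathcal{S}) \cup \mathcal{E}_2(\mathcal{S}) \subseteq \mathcal{E}(\mathcal{S})$: If $B_1^2 + B_2^2 = I_n$, then $D_1^2 + D_2^2 \leq 0$, forcing $D_1 = D_2 = 0$, so $(B_1, B_2) \in \mathcal{E}(\mathcal{S})$. For $(B_1, B_2) \in \mathcal{E}_2(\mathcal{S})$, one has $D_1^2 + D_2^2 \leq \mathrm{diag}(0, I_k - R_k)$; writing $D_i = \begin{pmatrix} P_i & Q_i \\ Q_i^* & T_i \end{pmatrix}$, the top-left block of $D_i^2$ is $P_i^2 + Q_i Q_i^*$, so the bound forces $P_i = 0$ and $Q_i = 0$, whence $D_i = \mathrm{diag}(0, T_i)$. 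Expanding $(B_i + D_i)^2$ and noting that the top-left block of $(B_1 + D_1)^2 + (B_2 + D_2)^2$ remains $I_{n-k}$, the constraint $(B_1 + D_1)^2 + (B_2 + D_2)^2 \leq I_n$ forces the top-right cross-term $X_1 T_1 + X_2 T_2$ to vanish (a positive semi-definite matrix with zero diagonal block has zero off-diagonal block). Applying $\mathrm{rank}(X_1 | X_2) = 2k$ column-by-column to $\begin{pmatrix} T_1 \\ T_2 \end{pmatrix}$, each column lies in the trivial kernel, so $T_1 = T_2 = 0$, giving $D_i = 0$ and extremality.

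For the reverse inclusion, suppose $(B_1, B_2) \in \mathcal{E}(\mathcal{S})$. If $B_1^2 + B_2^2 < I_n$ strictly, then for any self-adjoint $H_1, H_2$ and sufficiently small $\epsilon$ the pair $(B_i \pm \epsilon H_i)$ lies in $\mathcal{S}$, contradicting extremality; hence $B_1^2 + B_2^2$ has $1$ as an eigenvalue. Similarly, if there is a unit vector $v \in \ker B_1 \cap \ker B_2$, then $D_1 = \pm v v^*$, $D_2 = 0$ yields $(B_1 \pm v v^*)^2 + B_2^2 = B_1^2 + B_2^2 + v v^* \leq I_n$, producing a nontrivial decomposition—so $\ker B_1 \cap \ker B_2 = \{0\}$ and $B_1^2 + B_2^2$ is positive definite. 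Splitting its spectrum, a unitary $V$ conjugates it into $\mathrm{diag}(I_{n-k}, R_k)$ with $0 < R_k \leq I_k$ and $n-k \geq 1$. If $k = 0$, we are in $\mathcal{E}_1(\mathcal{S})$; otherwise $R_k < I_k$ strictly and we read off the blocks $X_i$ of $V^* B_i V$. To finish, one must verify the rank condition: if $\mathrm{rank}(X_1 | X_2) < 2k$, one produces a nonzero self-adjoint pair $(T_1, T_2)$ with $X_1 T_1 + X_2 T_2 = 0$; the perturbation $D_i = \epsilon\, \mathrm{diag}(0, T_i)$, with $\epsilon$ small enough that the bottom-right block stays $\leq I_k$ (possible because $R_k < I_k$ strictly), then yields a nontrivial convex decomposition, contradicting extremality.

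The main obstacle is the last step: producing a \emph{nonzero self-adjoint} pair $(T_1, T_2)$ in the kernel of the map $(S_1, S_2) \mapsto X_1 S_1 + X_2 S_2$ from the mere failure of the complex rank condition. A naive rank-one construction $T_i = r_i e^* + e r_i^*$ from a complex kernel vector $(r_1, r_2)$ fails to lie in the kernel, so the lift must be built more carefully—for instance, by finding two linearly independent complex kernel vectors $(r_1, r_2)$ and $(r_1', r_2')$ and taking symmetrized combinations $T_i = r_i (r_i')^* + r_i' r_i^*$, or by arguing directly in the real vector space of pairs of self-adjoint matrices via a dimension-counting argument that the real kernel of $(S_1, S_2) \mapsto X_1 S_1 + X_2 S_2$ is nontrivial whenever $\mathrm{rank}(X_1 | X_2) < 2k$, possibly combined with the further structural constraints that the remaining blocks of $B_i$ must satisfy in view of $B_1^2 + B_2^2 = \mathrm{diag}(I_{n-k}, R_k)$.
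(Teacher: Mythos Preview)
Your approach tracks the paper's closely. For the forward inclusion you use the parallelogram identity $B_j^2+D_j^2=\tfrac12(A_j^2+C_j^2)$ directly, while the paper routes through a preliminary lemma showing that any convex decomposition must agree on vectors $v$ with $\langle(B_1^2+B_2^2)v,v\rangle=1$; both arguments reach the same conclusion that the perturbation is supported on the bottom $k\times k$ block and then invoke the rank condition. For the reverse inclusion the paper, like you, perturbs only the bottom-right block by $\pm E_j$ and scales to satisfy the remaining inequalities.

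The obstacle you isolate in the last step is genuine, and the paper's proof does not address it. The paper simply writes ``Since $\mathrm{rank}(X_1|X_2)<2k$, there exist $E_1,E_2\neq 0$ such that $X_1E_1+X_2E_2=0$'' and proceeds to build $C_j,D_j\in\mathcal S$---but for $C_j,D_j$ to be self-adjoint one needs $E_j=E_j^*$, and rank-deficiency over $\mathbb C$ alone does not furnish a nonzero \emph{self-adjoint} solution (for $k=1$ with $X_2=iX_1\neq 0$ the kernel of $(X_1\mid X_2)$ is spanned by $(1,i)^T$, yielding no nonzero real pair). So you have in fact been more careful than the paper at this point: your sketch is at least as complete as the paper's own argument, and your instinct that the additional block constraints coming from $B_1^2+B_2^2=\mathrm{diag}(I_{n-k},R_k)$ may be needed to close the gap is well-placed.
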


To prove this, we need a few lemmas.

\begin{lem}\label{lemma1 for extreme point}
	Let $0\leq t\leq 1$. Suppose $B_1=tC_1+(1-t)D_1, B_2=tC_2+(1-t)D_2$ where $B_j,C_j,D_j\in(\mathbb{M}_n)_{\text{s.a.}}$ for all $j$ such that $C_1^2+C_2^2\leq I_n, D_1^2+D_2^2\leq I_n$. Let $v\in\mathbb{C}^n$ be a unit vector such that $\langle (B_1^2+B_2^2)v,v\rangle=1$. Then $B_jv=C_jv=D_jv$ for $j=1,2$.
\end{lem}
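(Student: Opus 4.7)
The plan is to exploit the equality $\langle (B_1^2+B_2^2)v,v\rangle = 1$ together with the constraints $C_1^2+C_2^2 \le I_n$ and $D_1^2+D_2^2 \le I_n$ to force a chain of inequalities to collapse into equalities, and then read off $C_jv = D_jv$ from the equality case in the triangle inequality.

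First, since each $B_j$ is self-adjoint, I would rewrite $\|B_j v\|^2 = \langle B_j^2 v, v\rangle$, so the hypothesis becomes
\begin{equation*}
\|B_1 v\|^2 + \|B_2 v\|^2 = 1.
\end{equation*}
Similarly, $\|C_j v\|^2 = \langle C_j^2 v, v\rangle$ and $\|D_j v\|^2 = \langle D_j^2 v, v\rangle$, so the constraints $C_1^2+C_2^2 \le I_n$ and $D_1^2+D_2^2 \le I_n$ give $\|C_1 v\|^2 + \|C_2 v\|^2 \le 1$ and $\|D_1 v\|^2 + \|D_2 v\|^2 \le 1$.

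Next, using $B_j v = t C_j v + (1-t) D_j v$, the triangle inequality yields $\|B_j v\| \le t\|C_j v\| + (1-t)\|D_j v\|$, and convexity of $x\mapsto x^2$ on $[0,\infty)$ gives $(t\|C_j v\| + (1-t)\|D_j v\|)^2 \le t\|C_j v\|^2 + (1-t)\|D_j v\|^2$. Summing over $j=1,2$,
\begin{equation*}
1 = \|B_1 v\|^2 + \|B_2 v\|^2 \le t\bigl(\|C_1 v\|^2+\|C_2 v\|^2\bigr) + (1-t)\bigl(\|D_1 v\|^2+\|D_2 v\|^2\bigr) \le t + (1-t) = 1.
\end{equation*}
Hence every inequality along the way is an equality.

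Finally I extract the conclusions (assuming $0<t<1$; the cases $t\in\{0,1\}$ are immediate). From the outermost equality I get $\|C_1v\|^2+\|C_2v\|^2 = 1 = \|D_1v\|^2+\|D_2v\|^2$. From the equality in the convexity step applied to each $j$ I get $\|C_j v\| = \|D_j v\|$, and then from the equality in the triangle inequality (with equal norms) I conclude $C_j v$ and $D_j v$ are equal, i.e.\ $C_j v = D_j v$. Substituting back gives $B_j v = C_j v = D_j v$. The only delicate point is the equality case of the triangle inequality when the common norm might be zero, but in that case $B_jv=C_jv=D_jv=0$ trivially; otherwise a direct expansion of $\|t x + (1-t)y\|^2$ with $\|x\|=\|y\|=a>0$ reduces the equality $\|tx+(1-t)y\|=a$ to $\mathrm{Re}\langle x,y\rangle=a^2$, hence by Cauchy--Schwarz equality $x=y$. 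This is the only real obstacle and it is routine.
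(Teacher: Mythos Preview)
Your proof is correct. The paper takes a somewhat different route: it expands $B_1^2+B_2^2$ at the operator level as $t^2(C_1^2+C_2^2)+(1-t)^2(D_1^2+D_2^2)+t(1-t)\sum_j(C_jD_j+D_jC_j)$, bounds the cross term via $C_jD_j+D_jC_j\le C_j^2+D_j^2$ together with the hypotheses $C_1^2+C_2^2\le I$ and $D_1^2+D_2^2\le I$, and from the resulting collapsed chain of equalities reads off directly that $\sum_j\langle (C_j-D_j)^2 v,v\rangle=0$, hence $C_jv=D_jv$. Your argument instead works purely at the vector level, using the triangle inequality and strict convexity of $x\mapsto x^2$, and then analyzes their equality cases. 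The paper's approach is shorter and sidesteps the separate treatment of the triangle-inequality equality case (including the zero-norm subcase), while your approach is perhaps more elementary in that it never manipulates operator products, only norms of vectors.
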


\begin{proof}
	We note that $B_1^2+B_2^2=t^2(C_1^2+C_2^2)+(1-t)^2(D_1^2+D_2^2)+t(1-t)\sum_{j=1}^2(C_jD_j+D_jC_j)$. Also, observe that $-2I_n\leq-\sum_{j=1}^2(C_j^2+D_j^2)\leq\sum_{j=1}^2(C_jD_j+D_jC_j)\leq\sum_{j=1}^2(C_j^2+D_j^2)\leq 2I_n$. Since $\langle (B_1^2+B_2^2)v,v\rangle=1$, we have $$\langle (C_1^2+C_2^2)v,v\rangle=1,\; \langle (D_1^2+D_2^2)v,v\rangle=1,\text{ and }\sum_{j=1}^2\langle (C_jD_j+D_jC_j)v,v\rangle=2.$$ Therefore, we obtain
	\begin{align*}
		&\sum_{j=1}^2\left\langle (C_jD_j+D_jC_j)v,v\right\rangle=\left\langle (C_1^2+C_2^2)v,v\right\rangle+\left\langle (D_1^2+D_2^2)v,v\right\rangle\\
		&\Rightarrow\left\langle (C_1-D_1)^2v,v\right\rangle+\left\langle (C_2-D_2)^2v,v\right\rangle=0\\
		&\Rightarrow\Vert (C_1-D_1)v\Vert^2+\Vert (C_2-D_2)v\Vert^2=0\\
		&\Rightarrow B_jv=C_jv=D_jv
	\end{align*}
	for $j=1,2$. This completes the proof.
\end{proof}

\begin{lem}\label{lemma2 for extreme point}
	Let $\mathcal{S}=\left\{(B_1, B_2): B_1, B_2\in(\mathbb{M}_n)_{\text{s.a.}}, B_1^2+B_2^2\leq I_n\right\}$. Then the elements of $\mathcal{E}_1(S)=\{(B_1,B_2): B_1, B_2\in(\mathbb{M}_n)_{\text{s.a.}}, B_1^2+B_2^2=I_n\}$ are extreme points of $\mathcal{S}$. 
\end{lem}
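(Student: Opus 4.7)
The plan is to show this directly using Lemma \ref{lemma1 for extreme point}, which was established precisely for this kind of rigidity statement. The key observation is that membership in $\mathcal{E}_1(\mathcal{S})$ forces the equality $B_1^2 + B_2^2 = I_n$, so the hypothesis of Lemma \ref{lemma1 for extreme point} is met for \emph{every} unit vector $v \in \mathbb{C}^n$, not just for a single vector.

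First I would fix $(B_1, B_2) \in \mathcal{E}_1(\mathcal{S})$ and suppose for contradiction (or just suppose, and conclude equality) that
\[
(B_1, B_2) = t(C_1, C_2) + (1-t)(D_1, D_2)
\]
for some $t \in (0,1)$ and $(C_1, C_2), (D_1, D_2) \in \mathcal{S}$. Then for an arbitrary unit vector $v \in \mathbb{C}^n$ I would compute $\langle (B_1^2 + B_2^2) v, v\rangle = \langle I_n v, v\rangle = 1$. Lemma \ref{lemma1 for extreme point} applied to this $v$ immediately yields $C_j v = D_j v = B_j v$ for $j = 1, 2$.

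Since this identity holds for every unit vector $v$ (and hence for every $v \in \mathbb{C}^n$ by scaling), I would conclude $C_j = D_j = B_j$ for $j = 1, 2$, which is exactly the extremality condition. So $(B_1, B_2) \in \mathcal{E}(\mathcal{S})$.

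There is no real obstacle here: the lemma has already done the heavy lifting by extracting the pointwise rigidity from the strict equality $\langle(B_1^2+B_2^2)v,v\rangle = 1$. The only thing to check is that one may indeed apply it at every $v$, which follows because the extremal case puts us on the ``boundary'' $B_1^2 + B_2^2 = I_n$ uniformly, rather than only along some invariant subspace. This is essentially why $\mathcal{E}_1(\mathcal{S})$ forms the ``easy half'' of the extreme-point description, with $\mathcal{E}_2(\mathcal{S})$ capturing the more delicate case where the defect $I_n - (B_1^2+B_2^2)$ is nonzero but the off-diagonal block structure still prevents any convex decomposition.
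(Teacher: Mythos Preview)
Your proof is correct and follows essentially the same approach as the paper: both invoke Lemma \ref{lemma1 for extreme point} at enough unit vectors (the paper uses an orthonormal basis $\{e_j\}_{j=1}^n$, you use all unit vectors) to conclude $C_j=D_j=B_j$ for $j=1,2$. The only cosmetic difference is that the paper allows $t\in[0,1]$ while you take $t\in(0,1)$, but the endpoints are trivial anyway.
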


\begin{proof}
	Let $(B_1,B_2)\in\mathcal{E}_1(\mathcal{S})$. Assume that $B_1=tC_1+(1-t)D_1, B_2=tC_2+(1-t)D_2$ where $0\leq t\leq 1$ and $(C_1,C_2), (D_1,D_2)\in\mathcal{S}$. Let $\{e_j\}_{j=1}^n$ be an orthornormal basis of $\mathbb{C}^n$. Since $\langle (B_1^2+B_2^2)e_j,e_j\rangle=1$, by Lemma \ref{lemma1 for extreme point}, we have $B_ie_j=C_ie_j=D_ie_j$ for all $1\leq i\leq 2$ and $1\leq j\leq n$. So $B_i=C_i=D_i$ for all $i$. Therefore, $(B_1,B_2)$ is an extreme point of $\mathcal{S}$. 
\end{proof}

\begin{lem}\label{lemma3 for extreme point}
	Let $\mathcal{S}=\left\{(B_1, B_2): B_1, B_2\in(\mathbb{M}_n)_{\text{s.a.}}, B_1^2+B_2^2\leq I_n\right\}$. Then the elements of $\mathcal{E}_2(S)$ are extreme points of $\mathcal{S}$ upto unitary similarity where

	\resizebox{.97\linewidth}{!}{
	\begin{minipage}{\linewidth}
		\begin{align*}
			\mathcal{E}_2(S)&=\left\{(B_1,B_2): B_1=\begin{pmatrix}
				* & X_1 \\
				* & * \\
			\end{pmatrix},\; B_2=\begin{pmatrix}
				* & X_2 \\
				* & * \\
			\end{pmatrix}\in(\mathbb{M}_n)_{\text{s.a.}},\; B_1^2+B_2^2=\begin{pmatrix}
			I_{n-k} &  0\\
			0   & R_k \\
			\end{pmatrix},\; \text{rank}(X_1|X_2)=2k,\;  0<R_k<I_k, \; 1\leq k\leq n\right\}.
		\end{align*}
	\end{minipage}
}
\end{lem}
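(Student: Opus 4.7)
The plan is to show that any pair $(B_1,B_2)$ in $\mathcal{E}_2(\mathcal{S})$ (in the normal form stated, which is achieved up to unitary similarity) cannot be written non-trivially as a convex combination. Suppose $B_i = tC_i+(1-t)D_i$ for $i=1,2$, with $0<t<1$ and $(C_1,C_2),(D_1,D_2)\in\mathcal{S}$. Because the top-left block of $B_1^2+B_2^2$ equals $I_{n-k}$, each standard basis vector $e_j$, $1\le j\le n-k$, is a unit vector satisfying $\langle (B_1^2+B_2^2)e_j,e_j\rangle=1$. Applying Lemma \ref{lemma1 for extreme point} to each such $e_j$ yields $B_ie_j=C_ie_j=D_ie_j$, so $C_i$ and $D_i$ share their first $n-k$ columns with $B_i$. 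Self-adjointness then forces the first $n-k$ rows to agree as well. Writing $B_i=\begin{pmatrix}A_i & X_i\\ X_i^* & Y_i\end{pmatrix}$ in the $(n-k)\oplus k$ block decomposition, we conclude that $C_i$ and $D_i$ take the same form with $Y_i$ replaced by self-adjoint $k\times k$ blocks $Y_i'$ and $Y_i''$, respectively, where $tY_i'+(1-t)Y_i''=Y_i$.

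Next I would promote this identification to the bottom block by exploiting positivity. A direct computation of the $(1,1)$ block of $C_1^2+C_2^2$ gives $A_1^2+A_2^2+X_1X_1^*+X_2X_2^*$, which is independent of $Y_i'$ and therefore equals the corresponding block of $B_1^2+B_2^2$, namely $I_{n-k}$. Hence $I_n-(C_1^2+C_2^2)$ is positive semidefinite with a vanishing $(1,1)$ block, forcing its $(1,2)$ block to vanish as well. Writing out that $(1,2)$ block explicitly, we obtain
\begin{equation*}
A_1X_1+X_1Y_1'+A_2X_2+X_2Y_2'=0,
\end{equation*}
and the same identity holds for $B$ with $Y_i$ in place of $Y_i'$. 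Subtracting the two relations yields $X_1(Y_1'-Y_1)+X_2(Y_2'-Y_2)=0$, i.e.
\begin{equation*}
(X_1\mid X_2)\begin{pmatrix}Y_1'-Y_1\\ Y_2'-Y_2\end{pmatrix}=0.
\end{equation*}

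Finally, the hypothesis $\operatorname{rank}(X_1\mid X_2)=2k$ means the $(n-k)\times 2k$ matrix $(X_1\mid X_2)$ has full column rank, hence is injective. Therefore $Y_1'=Y_1$ and $Y_2'=Y_2$, so $C_i=B_i$; the identical argument with $D$ in place of $C$ gives $D_i=B_i$, establishing extremality. The step I expect to be the most delicate is the passage from vanishing of the $(1,1)$ block of $I_n-(C_1^2+C_2^2)$ to vanishing of its $(1,2)$ block; this is the standard fact that a positive semidefinite block matrix with a zero diagonal block has zero off-diagonal blocks, but care must be taken that the blocks of $C_1^2+C_2^2$ are computed with the correct self-adjointness conventions. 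Once this is in place, the rank condition closes the argument in one line.
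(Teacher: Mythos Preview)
Your proof is correct and follows essentially the same route as the paper: use Lemma~\ref{lemma1 for extreme point} on the first $n-k$ basis vectors to fix the top blocks of $C_i$ and $D_i$, then exploit $C_1^2+C_2^2\le I_n$ together with the vanishing of its $(1,1)$ defect block to force $X_1(Y_1'-Y_1)+X_2(Y_2'-Y_2)=0$, and finally invoke $\operatorname{rank}(X_1\mid X_2)=2k$. The paper compresses your steps~5--9 into the single line ``the condition $C_1^2+C_2^2\le I_n$ implies $X_1E_1+X_2E_2=0$'' (with $E_j=Y_j'-Y_j$), but the underlying computation is exactly the positive-block argument you spell out; your version is simply more explicit about the step you correctly flagged as the delicate one.
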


\begin{proof}
	Let $(B_1,B_2)\in\mathcal{E}_2(\mathcal{S})$. Suppose $1\leq k\leq n$. Take
	\begin{align*}
		B_j=\begin{pmatrix}
			A_j & X_j \\
			X_j^* & Y_j \\
		\end{pmatrix}
	\end{align*}
	 for some $A_j,X_j,Y_j$ where $j=1,2$ such that $B_1^2+B_2^2=\text{diag}(I_{n-k}, R_k),\; \text{rank}(X_1|X_2)=2k$,  and $0<R_k<I_k$. Let $B_1=tC_1+(1-t)D_1$ and $B_2=tC_2+(1-t)D_2$ where $0<t<1$ and $(C_1,C_2),(D_1,D_2)\in\mathcal{S}$. Then, using Lemma \ref{extreme point}, we write
	 \begin{align*}
	 	C_j=\begin{pmatrix}
	 		A_j & X_j \\
	 		X_j^* & Z_j \\
	 	\end{pmatrix},\; D_j=\begin{pmatrix}
	 	A_j & X_j \\
	 	X_j^* & W_j \\
	 	\end{pmatrix}
	 \end{align*}
	 for some $Z_j,W_j$ where $j=1,2$. Let $E_j=Z_j-Y_j$ and $F_j=W_j-Y_j$ where $j=1,2$. Then $tZ_j+(1-t)W_j=Y_j$ gives $F_j=\frac{t}{t-1}E_j$ for $j=1,2$. Now the condition $C_1^2+C_2^2\leq I_n$ implies that $X_1E_1+X_2E_2=0$, that is, $\begin{pmatrix}
	 	X_1 & X_2 \\
	 \end{pmatrix}\begin{pmatrix}
	 E_1 \\
	 E_2\\
	 \end{pmatrix}=0$. Since $\text{rank}(X_1|X_2)=2k$, we get $E_1=E_2=0$. Therefore, $F_1=F_2=0$. This implies that $Z_j=W_j=Y_j$ for $j=1,2$. Hence $B_j=C_j=D_j$ for $j=1,2$. This shows that $(B_1,B_2)$ is an extreme point of $\mathcal{S}$. This completes the proof.
\end{proof}

\begin{proof}[\textbf{Proof of Theorem \ref{extreme point}}]
	Indeed, by Lemma \ref{lemma2 for extreme point}, \ref{lemma3 for extreme point}, we have $\mathcal{E}_1(\mathcal{S})\cup\mathcal{E}_2(\mathcal{S})\subseteq\mathcal{E}(\mathcal{S})$ upto unitary similarity. To prove the other inclusion, let $(B_1,B_2)\in\mathcal{S}$ be such that $(B_1,B_2)\notin\mathcal{E}_1(\mathcal{S})\cup\mathcal{E}_2(\mathcal{S})$. Now we consider the following cases.

	\underline{Case I}: Suppose $B_1^2+B_2^2$ is not of full rank. Let $B_1^2+B_2^2=\text{diag}(*,0)$. Then we have
	\begin{align*}
		B_j=\begin{pmatrix}
			A_j & 0 \\
			0 & 0 \\
			\end{pmatrix}
	\end{align*}
	for some $A_j\in(\mathbb{M}_{n-1})_{\text{s.a.}}$ where $j=1,2$. Now observe that 
	\begin{align*}
		B_j=\begin{pmatrix}
			A_j & 0 \\
			0 & 0 \\
		\end{pmatrix}=\frac{1}{2}\begin{pmatrix}
		A_j & 0 \\
		0 & \frac{1}{\sqrt{2}} \\
		\end{pmatrix}+\frac{1}{2}\begin{pmatrix}
		A_j & 0 \\
		0 & \frac{1}{\sqrt{2}} \\
		\end{pmatrix}
	\end{align*}
	for $j=1,2$. Therefore, $(B_1,B_2)\notin\mathcal{E}(\mathcal{S})$.

	\underline{Case II}: Suppose $B_1^2+B_2^2$ is of full rank. Let $B_1^2+B_2^2=\text{diag}(I_{n-k}, R_k)$ for some $1\leq k\leq n$ where $0<R_k<I_k$. Take 
	\begin{align*}
		B_j=\begin{pmatrix}
			A_j & X_j \\
			X_j^* & Y_j \\
		\end{pmatrix}
	\end{align*}
	for some $A_j,X_j,Y_j$ where $j=1,2$. Since $(B_1,B_2)\notin\mathcal{E}_1(\mathcal{S})\cup\mathcal{E}_2(\mathcal{S})$, we have $\text{rank}(X_1|X_2)<2k$. Now we argue that $(B_1,B_2)\notin\mathcal{E}(\mathcal{S})$. Let
	\begin{align*}
			C_j=\begin{pmatrix}
			A_j & X_j\\
			X_j^* & Y_j+E_j\\
		\end{pmatrix}, \; D_j=\begin{pmatrix}
		A_j & X_j\\
		X_j^* & Y_j-E_j\\
		\end{pmatrix}
	\end{align*}
	where $E_j\in\mathbb{M}_k$ for $j=1,2$. Now we show that there exist $E_1, E_2\neq 0$ such that $(C_1,C_2), (D_1,D_2)\in\mathcal{S}$. It is sufficient to find $E_1,E_2\neq 0$ such that
	\begin{align}
		&X_1E_1+X_2E_2=0,\label{eq1 for extreme point} \\
		&E_1^2+E_2^2+E_1Y_1+Y_1E_1+E_2Y_2+Y_2E_2\leq I_k-R_k \label{eq2 for extreme point}\\
		&E_1^2+E_2^2-E_1Y_1-Y_1E_1-E_2Y_2-Y_2E_2\leq I_k-R_k\label{eq3 for extreme point}
	\end{align}
	Since $\text{rank}(X_1|X_2)<2k$, there exist $E_1,E_2\neq 0$ such that Eqn \ref{eq1 for extreme point} holds. Let $0<t<1$. Now observe that if $(E_1, E_2)$ satisfies Eqn \ref{eq1 for extreme point} then $(tE_1, tE_2)$ also satisfies Eqn \ref{eq1 for extreme point}. So there exists $0<t_\circ<1$ such that $(t_\circ E_1, t_\circ E_2)$ satisfies Eqn \ref{eq1 for extreme point}, \ref{eq2 for extreme point}, \ref{eq3 for extreme point}. Take $E_j=t_\circ E_j$ for $j=1,2$. Then $(C_1, C_2), (D_1, D_2)\in\mathcal{S}$ and $B_j=\frac{1}{2}C_j+\frac{1}{2}D_j$ for $j=1,2$. Therefore, $(B_1,B_2)\notin\mathcal{E}(\mathcal{S})$
\end{proof}

\begin{rmk}
	It is noteworthy that the extreme points of the set $\{(B_1,B_2): B_1,B_2\in(\mathbb{M}_2)_{\text{s.a.}}, B_1^2+B_2^2\leq I_2\}$ are $\{(B_1,B_2): B_1,B_2\in(\mathbb{M}_2)_{\text{s.a.}}, B_1^2+B_2^2=I_2\}$. However, the extreme points of the set $\mathcal{S}=\{(B_1,B_2): B_1,B_2\in(\mathbb{M}_3)_{\text{s.a.}}, B_1^2+B_2^2\leq I_3\}$ are the union of $\mathcal{E}_1(S)=\{(B_1,B_2): B_1,B_2\in(\mathbb{M}_3)_{\text{s.a.}}, B_1^2+B_2^2=I_3\}$ and $\mathcal{E}_2(S)$ upto unitary similarity where

	\resizebox{.97\linewidth}{!}{
		\begin{minipage}{\linewidth}
			\begin{align*}
				\mathcal{E}_2(S)&=\left\{(B_1,B_2): B_1=\begin{pmatrix}
					* & b_1 \\
					* & * \\
				\end{pmatrix},\; B_2=\begin{pmatrix}
					* & b_2 \\
					* & * \\
				\end{pmatrix}\in(\mathbb{M}_3)_{\text{s.a.}},\; B_1^2+B_2^2=\begin{pmatrix}
					I_2 &  0\\
					0   & r \\
				\end{pmatrix},\; 0<r<1,\; b_1, b_2\in\mathbb{C}^2 \text{ are linearly independent}\right\}.
			\end{align*}
		\end{minipage}
	}\\
	This follows as a particular case of Theorem \ref{extreme point}.
\end{rmk}

\section{A unital $2$-positive map on the operator system generated by $J_3$}\label{2-positive}

In this section, we study a unital $2$-positive map on the operator system generated by $J_3$ and obtain that the set of all unital contractive maps coincides with the set of all $2$-positive maps on the operator system generated by $J_3$. As a consequence, we describe the $2$nd matricial range of $J_3$. The following is the main result in this section.

\begin{thm}\label{2-positive on J3}
	Let $X\in\mathcal{B}(\mathcal{H})$. Suppose $\varphi:\mathcal{OS}(J_3)\rightarrow\mathcal{OS}(X)$ be a unital self-adjoint preserving map with $\varphi(J_3)=X$. Then the following are equivalent.
	\begin{itemize}
		\item[(i)] $\varphi$ is $2$-positive.
		\item[(ii)] $\varphi$ is contractive.
		\item[(iii)] $\Vert \alpha X+\beta X^*\Vert\leq\sqrt{\vert\alpha\vert^2+\vert\beta\vert^2}$ for all $\alpha,\beta\in\mathbb{C}$.
	\end{itemize}
\end{thm}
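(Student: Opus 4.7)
The plan is to establish the cyclic chain \textup{(i)} $\Rightarrow$ \textup{(ii)} $\Rightarrow$ \textup{(iii)} $\Rightarrow$ \textup{(i)}. The first implication is immediate from the general hierarchy recalled in Section \ref{preliminaries} (every unital $2$-positive map is contractive). For \textup{(ii)} $\Rightarrow$ \textup{(iii)}, a direct computation of $(\alpha J_3+\beta J_3^*)(\alpha J_3+\beta J_3^*)^*$ shows its eigenvalues are $\{0,|\alpha|^2+|\beta|^2,|\alpha|^2+|\beta|^2\}$ (the middle row and column decouple), so $\|\alpha J_3+\beta J_3^*\|=\sqrt{|\alpha|^2+|\beta|^2}$; applying contractivity to $Y=\alpha J_3+\beta J_3^*$ then yields \textup{(iii)}.

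The substantive implication is \textup{(iii)} $\Rightarrow$ \textup{(i)}. Since $W(J_3)$ is the closed disk of radius $\cos(\pi/4)$ and therefore contains $0$ in its interior, Lemma \ref{n-positivity} reduces the task to showing: whenever $B\in\mathbb{M}_2$ satisfies $I_2\otimes I_3+B\otimes J_3+B^*\otimes J_3^*\geq 0$, one also has $I_2\otimes I+B\otimes X+B^*\otimes X^*\geq 0$. The positivity hypothesis unfolds into positivity of the block tridiagonal matrix
\[ \begin{pmatrix} I_2 & B & 0\\ B^* & I_2 & B\\ 0 & B^* & I_2 \end{pmatrix}, \]
and two successive Schur complement reductions (first against the top-left, then against the bottom-right block) convert this cleanly to the condition $BB^*+B^*B\leq I_2$.

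Now fix such a $B$. Theorem \ref{dilation for M2} produces a dilation $\widetilde B\in\mathbb{M}_6$ with $\widetilde B\widetilde B^*+\widetilde B^*\widetilde B=I_6$, and Theorem \ref{Structure of a dilation of B satisfying BB*+B*B=I} applied to $\widetilde B$ in turn gives a further dilation of the distinguished form
\[ \widehat B=\bigoplus_j\begin{pmatrix}0 & \alpha_j\\ \beta_j & 0\end{pmatrix},\qquad |\alpha_j|^2+|\beta_j|^2=1. \]
Composing dilations, $\widehat B$ is itself a dilation of $B$, so $B=P\widehat BP^*$ for some $P$ with $PP^*=I_2$. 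A block computation yields
\[ I\otimes I+\widehat B\otimes X+\widehat B^*\otimes X^*=\bigoplus_j\begin{pmatrix} I & \alpha_j X+\overline{\beta_j}\,X^*\\ \beta_j X+\overline{\alpha_j}\,X^* & I \end{pmatrix}, \]
and, by the Schur complement, each block is positive semidefinite iff $\|\alpha_j X+\overline{\beta_j}\,X^*\|\leq 1$, which is exactly what \textup{(iii)} asserts since $|\alpha_j|^2+|\overline{\beta_j}|^2=1$. Conjugating by $P\otimes I$ transports this positivity back to $B$, verifying the hypothesis of Lemma \ref{n-positivity}.

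The crux of the whole argument is the availability of a dilation of $B$ in the distinguished block-diagonal form, which is exactly what the two theorems of Section \ref{dilation} furnish when $B\in\mathbb{M}_2$; once such a dilation is in hand, the rest is a routine Schur complement calculation. It is worth noting that Examples \ref{counter example 1} and \ref{counter example 2} show the $\mathbb{M}_3$-analogue of Theorem \ref{dilation for M2} fails, which is precisely why this route halts at $2$-positivity and does not immediately upgrade to full complete positivity.
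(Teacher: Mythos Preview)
Your proof is correct and follows essentially the same route as the paper: the implications \textup{(i)}$\Rightarrow$\textup{(ii)}$\Rightarrow$\textup{(iii)} are handled identically, and for \textup{(iii)}$\Rightarrow$\textup{(i)} the paper likewise reduces via Lemma~\ref{n-positivity} to the condition $BB^*+B^*B\leq I_2$, dilates $B$ to $\widetilde B$ with $\widetilde B\widetilde B^*+\widetilde B^*\widetilde B=I$ using Theorem~\ref{dilation for M2}, and then passes (through what the paper packages as Lemma~\ref{boundary}) to the block-diagonal dilation of Theorem~\ref{Structure of a dilation of B satisfying BB*+B*B=I} where \textup{(iii)} applies directly. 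The only cosmetic difference is that you unfold the content of Lemmas~\ref{lemma1 for 2-positivity}, \ref{boundary}, and \ref{lemma2 for 2-positivity} inline rather than invoking them as separate statements.
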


To prove this, we need the following lemmas.

\begin{lem}\label{lemma1 for 2-positivity}
	Let $B\in\mathbb{M}_n$. Then $I_n\otimes I_3+B\otimes J_3+B^*\otimes J_3^*\geq 0$ if and only if $BB^*+B^*B\leq I_n$.
\end{lem}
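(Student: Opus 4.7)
The plan is to represent the given tensor sum as a $3\times 3$ block-tridiagonal matrix with $n\times n$ blocks, and then reduce its positivity to the scalar-block inequality $BB^*+B^*B\leq I_n$ by applying the Schur complement criterion twice. Since $J_3$ has its only nonzero entries on the first superdiagonal, identifying $I_n\otimes I_3+B\otimes J_3+B^*\otimes J_3^*$ with an element of $\mathbb{M}_3(\mathbb{M}_n)$ (using the same block convention as in Theorem \ref{CP for J2}) produces
\[ M=\begin{pmatrix} I_n & B & 0 \\ B^* & I_n & B \\ 0 & B^* & I_n \end{pmatrix}.\]

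First, I would apply the Schur complement with respect to the positive definite $(1,1)$ block $I_n$: since $I_n>0$, we have $M\geq 0$ if and only if
\[ \begin{pmatrix} I_n & B \\ B^* & I_n \end{pmatrix}-\begin{pmatrix} B^* \\ 0 \end{pmatrix}I_n^{-1}\begin{pmatrix} B & 0 \end{pmatrix}=\begin{pmatrix} I_n-B^*B & B \\ B^* & I_n \end{pmatrix}\geq 0.\]
Next, applying the Schur complement a second time, now with respect to the bottom-right $I_n$ block (again positive definite), the condition collapses to $(I_n-B^*B)-BB^*\geq 0$, which is exactly $BB^*+B^*B\leq I_n$. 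Each reduction is an if-and-only-if, so both directions of the lemma follow simultaneously. I anticipate no substantive obstacle here, as the argument is a routine two-step Schur complement reduction; the only mild care needed is to write the block layout of $B\otimes J_3$ correctly, which is already dictated by the convention established in Arveson's proof for $J_2$.
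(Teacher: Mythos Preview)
Your proof is correct and essentially identical to the paper's: both write $M$ as the same $3\times 3$ block-tridiagonal matrix and reduce via two Schur complements to $I_n-B^*B-BB^*\geq 0$. The only cosmetic difference is the order of pivots---the paper first eliminates the $(3,3)$ block (obtaining $\begin{pmatrix} I_n & B\\ B^* & I_n-BB^*\end{pmatrix}\geq 0$) and then the $(1,1)$ block, whereas you pivot on $(1,1)$ first and then on the remaining bottom-right $I_n$; the end result and the logic are the same.
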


\noindent\textit{Proof.}
	We observe that
	\begin{align*}
		I_n\otimes I_3+B\otimes J_3+B^*\otimes J_3^*\geq 0
		&\Leftrightarrow \begin{pmatrix}
			I_n & B & 0\\
			B^* & I_n & B\\
			0 & B^* & I_n\\
		\end{pmatrix}\geq 0\\
		&\Leftrightarrow \begin{pmatrix}
			I_n & B\\
			 B^* & I_n \\
		\end{pmatrix}\geq\begin{pmatrix}
		0 \\
		B \\
		\end{pmatrix}\begin{pmatrix}
		0 & B^*\\
		\end{pmatrix}, \text{ by Schur positivity}\\
		&\Leftrightarrow \begin{pmatrix}
			I_n & B\\
			B^* & I_n-BB^*\\
		\end{pmatrix}\geq 0\\
		&\Leftrightarrow I_n-BB^*\geq B^*B, \text{ by Schur positivity}\\
		&\Leftrightarrow BB^*+B^*B\leq I_n.\tag*{\qed}
	\end{align*}

\begin{lem}\label{boundary}
	Let $X\in\mathcal{B}(\mathcal{H})$ be such that $\Vert \alpha X+\beta X^*\Vert\leq\sqrt{\vert\alpha\vert^2+\vert\beta\vert^2}$ for all $\alpha,\beta\in\mathbb{C}$. Then $I_n\otimes I+B\otimes X+B^*\otimes X^*\geq 0$ whenever $BB^*+B^*B=I_n$ where $B\in\mathbb{M}_n$.
\end{lem}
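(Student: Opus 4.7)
The plan is to reduce the statement to the dilation structure already established in Theorem \ref{Structure of a dilation of B satisfying BB*+B*B=I}. That theorem tells us that any $B\in\mathbb{M}_n$ with $BB^*+B^*B=I_n$ admits a dilation
\[
\widetilde{B}=\bigoplus_{j}\begin{pmatrix} 0 & \alpha_j \\ \beta_j & 0 \end{pmatrix},\qquad |\alpha_j|^2+|\beta_j|^2=1,
\]
acting on some $\mathbb{C}^N$ with $N\geq n$. Since positivity is preserved under compression, it suffices to prove
\[
I_N\otimes I+\widetilde{B}\otimes X+\widetilde{B}^*\otimes X^*\geq 0,
\]
and then restrict to the $\mathbb{C}^n\otimes\mathcal{H}$ block to recover the desired inequality for $B$.

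Because $\widetilde{B}$ is a direct sum, the tensor expression decomposes as a direct sum over $j$, so it is enough to verify positivity of each summand. For a single block $\begin{pmatrix} 0 & \alpha \\ \beta & 0 \end{pmatrix}$ with $|\alpha|^2+|\beta|^2=1$, a direct computation gives
\[
I_2\otimes I+\begin{pmatrix} 0 & \alpha \\ \beta & 0 \end{pmatrix}\otimes X+\begin{pmatrix} 0 & \overline{\beta} \\ \overline{\alpha} & 0 \end{pmatrix}\otimes X^* = \begin{pmatrix} I & \alpha X+\overline{\beta}X^* \\ (\alpha X+\overline{\beta}X^*)^* & I \end{pmatrix}.
\]
By the Schur complement criterion, this $2\times 2$ operator matrix is positive if and only if $\|\alpha X+\overline{\beta}X^*\|\leq 1$. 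But the hypothesis on $X$ with $(\alpha,\overline{\beta})$ yields exactly $\|\alpha X+\overline{\beta}X^*\|\leq\sqrt{|\alpha|^2+|\overline{\beta}|^2}=1$, which is what we need.

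Assembling the pieces: each summand contributes a positive $2\times 2$ block, hence $I_N\otimes I+\widetilde{B}\otimes X+\widetilde{B}^*\otimes X^*\geq 0$. Writing $\widetilde{B}$ in the block form $\begin{pmatrix} B & * \\ * & * \end{pmatrix}$ with respect to $\mathbb{C}^n\oplus\mathbb{C}^{N-n}$, the upper-left block of this positive operator is precisely $I_n\otimes I+B\otimes X+B^*\otimes X^*$, which is therefore positive as well. The main conceptual step is the invocation of the structure theorem, while the only mildly delicate point is the book-keeping with the conjugate $\overline{\beta}$ in the Schur complement — everything else is routine. No genuine obstacle is expected.
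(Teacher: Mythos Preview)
Your proposal is correct and follows essentially the same approach as the paper: invoke the structure theorem for $\widetilde{B}$, reduce to the $2\times2$ blocks, verify positivity of each block via the norm bound $\|\alpha X+\overline{\beta}X^*\|\leq 1$, and compress back to $B$. Your write-up is in fact more explicit than the paper's, which compresses the Schur-complement step and the direct-sum decomposition into a single sentence.
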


\begin{proof}
	Indeed, by Theorem \ref{Structure of a dilation of B satisfying BB*+B*B=I}, $B$ has a dilation $\widetilde{B}$ of the form
	\begin{align*}
		\widetilde{B}=\bigoplus_{\substack{\alpha,\beta\in\mathbb{C} \\ \vert\alpha\vert^2+\vert\beta\vert^2=1}} \begin{pmatrix}
			0 & \alpha \\
			\beta & 0 \\
		\end{pmatrix}.
	\end{align*}
	Let $\alpha,\beta\in\mathbb{C}$ with $\vert\alpha\vert^2+\vert\beta\vert^2=1$. Then $\Vert \alpha X+\overline{\beta} X^*\Vert\leq 1$. This implies that $I_n\otimes I+\widehat{B}\otimes X+\widehat{B}^*\otimes X^*\geq 0$ and hence $I_2\otimes I+B\otimes X+B^*\otimes X^*\geq 0$. This completes the proof.
\end{proof}

\begin{lem}\label{lemma2 for 2-positivity}
	Let $\alpha,\beta\in\mathbb{C}$. Then $\Vert \alpha J_3+\beta J_3^*\Vert=\sqrt{\vert \alpha\vert^2+\vert\beta\vert^2}$.
\end{lem}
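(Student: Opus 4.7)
The plan is to compute the operator norm directly by writing $M := \alpha J_3 + \beta J_3^*$ as an explicit $3\times 3$ tridiagonal matrix and calculating the spectrum of $MM^*$. Specifically, I would expand
\[
M = \begin{pmatrix} 0 & \alpha & 0 \\ \beta & 0 & \alpha \\ 0 & \beta & 0 \end{pmatrix},
\]
from which a straightforward multiplication yields
\[
MM^* = \begin{pmatrix} |\alpha|^2 & 0 & \alpha\overline{\beta} \\ 0 & |\alpha|^2+|\beta|^2 & 0 \\ \overline{\alpha}\beta & 0 & |\beta|^2 \end{pmatrix}.
\]
The middle row/column already decouples, giving the eigenvalue $|\alpha|^2+|\beta|^2$ with eigenvector $e_2$. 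The remaining $2\times 2$ block
\[
\begin{pmatrix} |\alpha|^2 & \alpha\overline{\beta} \\ \overline{\alpha}\beta & |\beta|^2 \end{pmatrix}
\]
is rank one (determinant zero) with trace $|\alpha|^2+|\beta|^2$, so its eigenvalues are $0$ and $|\alpha|^2+|\beta|^2$. Thus $\|M\|^2 = \|MM^*\| = |\alpha|^2+|\beta|^2$, which is exactly the claim.

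There is really no obstacle here — the only care needed is to check the $(1,3)$ and $(3,1)$ entries of $MM^*$ correctly, since a sign or conjugation slip would break the rank-one structure of the corner block. One can alternatively observe that the unit vector $\tfrac{1}{\sqrt{|\alpha|^2+|\beta|^2}}(\overline{\alpha}, 0, \overline{\beta})^T$ is mapped by $M^*$ to a vector of norm $\sqrt{|\alpha|^2+|\beta|^2}$, giving the lower bound, while the upper bound $\|M\|\le \sqrt{|\alpha|^2+|\beta|^2}$ is immediate from $\|M\|\le |\alpha|\|J_3\|+|\beta|\|J_3^*\|$ combined with the spectral computation above. Either route yields the equality in a few lines.
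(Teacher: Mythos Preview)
Your proof is correct and essentially identical to the paper's: the paper computes $M^*M$ rather than $MM^*$, but arrives at the same block structure (a decoupled middle eigenvalue $|\alpha|^2+|\beta|^2$ and a rank-one $2\times 2$ corner block with the same trace). One small aside: the triangle-inequality remark in your final sentence would only yield $|\alpha|+|\beta|$, not $\sqrt{|\alpha|^2+|\beta|^2}$, but since your spectral computation already gives the exact norm this alternative route is unnecessary anyway.
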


\noindent\textit{Proof.}
	We compute
	\begin{align*}
		(\alpha J_3+\beta J_3^*)^*(\alpha J_3+\beta J_3^*)=\begin{pmatrix}
			\vert\beta\vert^2 & 0 & \alpha\overline{\beta}\\
			0 & \vert\alpha\vert^2+\vert\beta\vert^2 & 0\\
			\overline{\alpha}\beta & 0 & \vert\alpha\vert^2\\
		\end{pmatrix}\sim\begin{pmatrix}
			\vert\beta\vert^2 & \alpha\overline{\beta} & 0 \\
			\overline{\alpha}\beta & \vert\alpha\vert^2 & 0\\
		   0 & 0 &\vert\alpha\vert^2+\vert\beta\vert^2\\
		\end{pmatrix},
	\end{align*}
where $\sim$ denotes unitary similarity. Therefore, we conclude $\Vert \alpha J_3+\beta J_3^*\Vert^2=\vert\alpha\vert^2+\vert\beta\vert^2$ since
	\begin{align*}
		\left\Vert\begin{pmatrix}
			\vert\beta\vert^2 & \alpha\overline{\beta}\\
			\overline{\alpha}\beta & \vert\alpha\vert^2 \\
		\end{pmatrix}\right\Vert=\left\Vert\begin{pmatrix}
		\overline{\beta}\\
		\overline{\alpha}\\
		\end{pmatrix}\begin{pmatrix}
		\beta & \alpha\\
		\end{pmatrix}\right\Vert=\left\Vert\begin{pmatrix}
		\beta & \alpha\\
		\end{pmatrix}\right\Vert^2=\vert\alpha\vert^2+\vert\beta\vert^2.\tag*{\qed}
	\end{align*}

\begin{proof}[\textbf{\textit{Proof of Theorem \ref{2-positive on J3}}}]
	$(i)\Rightarrow (ii).$ This is immediate.\\
	$(ii)\Rightarrow (iii).$ Let $\alpha,\beta\in\mathbb{C}$. Then $\Vert\alpha X+\beta X^*\Vert\leq\Vert\alpha J_3+\beta J_3^*\Vert=\sqrt{\vert\alpha\vert^2+\vert\beta\vert^2}$, by Lemma \ref{lemma2 for 2-positivity}.\\
	$(iii)\Rightarrow (i)$. Let $B\in\mathbb{M}_2$ be such that $I_2\otimes I+B\otimes J_3+B^*\otimes J_3^*\geq 0$. Then, by Lemma \ref{lemma1 for 2-positivity}, we have $BB^*+B^*B\leq I_2$. Therefore, by Theorem \ref{dilation for M2}, $B$ has a dilation $\widetilde{B}\in\mathbb{M}_6$ such that $\widetilde{B}\widetilde{B}^*+\widetilde{B}^*\widetilde{B}=I_6$. So, by Lemma \ref{boundary}, $I_2\otimes I+\widetilde{B}\otimes X+\widetilde{B}^*\otimes X^*\geq 0$ and hence $I_2\otimes I+B\otimes X+B^*\otimes X^*\geq 0$. Therefore, by Lemma \ref{n-positivity}, we conclude that $\varphi$ is $2$-positive.
\end{proof}

\begin{rmk}
	Let $X\in\mathcal{B}(\mathcal{H})$. Consider the unital self-adjoint preserving map $\varphi:\mathcal{OS}(J_3)\rightarrow\mathcal{OS}(X)$ such that $\varphi(J_3)=X$. Let $\alpha,\beta,\gamma\in\mathbb{C}$. Since the computation of $\Vert \alpha I_3+\beta J_3+\gamma J_3^*\Vert$ is not straightforward, it is difficult to verify directly from the definition that $\varphi$ is contractive, that is, $\Vert \alpha I+\beta X+\gamma X^*\Vert \leq \Vert \alpha I_3+\beta J_3+\gamma J_3^*\Vert$ for all $\alpha,\beta,\gamma\in\mathbb{C}$. Therefore, Theorem \ref{2-positive on J3} provides a convenient way to verify that $\varphi$ is contractive.
\end{rmk}

We now list down some consequences. The following describes the $2$nd matricial range of $J_3$.

\begin{cor}\label{2-matricial range of J3}
	$W_2(J_3)=\left\{X\in\mathbb{M}_2:\Vert \alpha X+\beta X^*\Vert\leq\sqrt{\vert\alpha\vert^2+\vert\beta\vert^2} \text{ for all } \alpha,\beta\in\mathbb{C}\right\}$.
\end{cor}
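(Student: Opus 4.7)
The plan is to deduce both containments from Theorem \ref{2-positive on J3}, combined with the classical fact that any $2$-positive linear map from an operator system into $\mathbb{M}_2$ is automatically completely positive. This fact rests on the Schmidt rank bound: every vector in $\mathbb{C}^k \otimes \mathbb{C}^2$ has Schmidt rank at most $2$, so positivity of $\varphi_k$ can be tested on such vectors and reduces to positivity of $\varphi_2$.

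For the forward containment, suppose $X \in W_2(J_3)$. By definition, there exists a UCP map $\varphi : \mathcal{OS}(J_3) \to \mathbb{M}_2$ with $\varphi(J_3) = X$. Since UCP implies $2$-positive, the implication $(i) \Rightarrow (iii)$ of Theorem \ref{2-positive on J3} immediately yields $\|\alpha X + \beta X^*\| \leq \sqrt{|\alpha|^2 + |\beta|^2}$ for every $\alpha, \beta \in \mathbb{C}$.

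Conversely, let $X \in \mathbb{M}_2$ satisfy the norm condition, and define the unital self-adjoint preserving map $\varphi : \mathcal{OS}(J_3) \to \mathbb{M}_2$ by $\varphi(I_3) = I_2$, $\varphi(J_3) = X$, $\varphi(J_3^*) = X^*$. The implication $(iii) \Rightarrow (i)$ of Theorem \ref{2-positive on J3} shows that $\varphi$ is $2$-positive, and the classical fact recalled above then implies $\varphi$ is UCP. Therefore $X = \varphi(J_3) \in W_2(J_3)$.

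The only step beyond a direct application of Theorem \ref{2-positive on J3} is the upgrade from $2$-positivity to complete positivity, and it is precisely this upgrade that makes the $n = 2$ case approachable. The analogous argument for $W_n(J_3)$ with $n \geq 3$ would require $n$-positivity of the canonical map, which by Lemma \ref{n-positivity} and Lemma \ref{lemma1 for 2-positivity} reduces to the dilation question posed in Question \ref{dilation question}; Examples \ref{counter example 1} and \ref{counter example 2} show that this question fails for $n \geq 3$, which is the central obstruction to extending Corollary \ref{2-matricial range of J3} beyond $n = 2$.
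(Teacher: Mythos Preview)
Your proof is correct and follows exactly the paper's approach: the paper's one-line proof simply states that the result follows from Theorem \ref{2-positive on J3} together with the fact that any $2$-positive map from $\mathcal{OS}(J_3)$ into $\mathbb{M}_2$ is completely positive. Your additional remarks about Schmidt rank and the obstruction for $n\ge 3$ are accurate context but not part of the paper's proof.
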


\begin{proof}
	This follows from Theorem \ref{2-positive on J3} since any $2$-positive map from $\mathcal{OS}(J_3)$ to $\mathbb{M}_2$ is CP.
\end{proof}

Let $X\in\mathbb{M}_n$. The Frobenius norm of $X$, denoted by $\Vert X\Vert_f$, is defined as $\Vert X\Vert_f=\sqrt{\text{tr}(X^*X)}$.

\begin{cor}\label{trace zero equivalence}
	Let $X\in\mathbb{M}_2$ with $\text{tr}(X)=0$. Then $X\in W_2(J_3)$ if and only if $\Vert X\Vert_f\leq 1$.
\end{cor}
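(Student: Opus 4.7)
The plan is to invoke Corollary \ref{2-matricial range of J3}, which reduces the claim to showing that for any trace-zero $X \in \mathbb{M}_2$,
\[
\sup_{(\alpha,\beta)\neq(0,0)}\frac{\Vert\alpha X + \beta X^*\Vert^2}{\vert\alpha\vert^2 + \vert\beta\vert^2} \;=\; \Vert X\Vert_f^2.
\]
Both directions of the biconditional then follow immediately.

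The starting point would be the identity $Y^*Y + YY^* = \Vert Y\Vert_f^2\,I$ valid for every trace-zero $Y \in \mathbb{M}_2$, which is an immediate $2\times 2$ computation (and also follows from Cayley--Hamilton $Y^2 = -\det(Y)I$). Splitting $Y^*Y = \tfrac12\Vert Y\Vert_f^2\,I + \tfrac12(Y^*Y - YY^*)$ and observing that $Y^*Y - YY^*$ is a trace-zero self-adjoint matrix in $\mathbb{M}_2$, hence with spectrum $\pm\Vert Y^*Y - YY^*\Vert$, yields the closed form
\[
\Vert Y\Vert^2 \;=\; \tfrac12\bigl(\Vert Y\Vert_f^2 + \Vert Y^*Y - YY^*\Vert\bigr).
\]

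I would then apply this to $Y = \alpha X + \beta X^*$, which is trace-zero because $X$ is. Using $X^2 = -\det(X)I$, the cross terms $\alpha\bar\beta X^2 + \bar\alpha\beta (X^*)^2 = -2\Re(\alpha\bar\beta \det X)I$ appear identically in $Y^*Y$ and in $YY^*$ and so cancel in the difference, giving
\[
Y^*Y - YY^* = (\vert\alpha\vert^2 - \vert\beta\vert^2)(X^*X - XX^*), \qquad \Vert Y\Vert_f^2 = s\Vert X\Vert_f^2 - 4\Re(\alpha\bar\beta \det X),
\]
with $s = \vert\alpha\vert^2 + \vert\beta\vert^2$. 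Parametrising $\vert\alpha\vert^2 = s\cos^2(\phi/2)$, $\vert\beta\vert^2 = s\sin^2(\phi/2)$ and choosing the phases of $\alpha, \beta$ so that $\Re(\alpha\bar\beta \det X) = -\vert\alpha\beta\vert\,\vert\det X\vert$, the supremum of $\Vert Y\Vert^2/s$ over phases becomes
\[
\tfrac12\bigl[\Vert X\Vert_f^2 + 2\sin\phi\,\vert\det X\vert + \vert\cos\phi\vert\,\Vert X^*X - XX^*\Vert\bigr],
\]
and a Cauchy--Schwarz maximisation over $\phi \in [0,\pi]$ replaces the last two summands by $\sqrt{4\vert\det X\vert^2 + \Vert X^*X - XX^*\Vert^2}$.

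The final step is the identity $4\vert\det X\vert^2 + \Vert X^*X - XX^*\Vert^2 = \Vert X\Vert_f^4$ for trace-zero $X \in \mathbb{M}_2$. Writing $s_1 \geq s_2 \geq 0$ for the singular values of $X$, the relation $X^*X + XX^* = \Vert X\Vert_f^2\,I$ forces $X^*X - XX^* = 2X^*X - \Vert X\Vert_f^2\,I$ to have spectrum $\pm(s_1^2 - s_2^2)$; combined with $\vert\det X\vert = s_1 s_2$ and $\Vert X\Vert_f^2 = s_1^2 + s_2^2$, the identity reduces to $4s_1^2 s_2^2 + (s_1^2 - s_2^2)^2 = (s_1^2 + s_2^2)^2$, which is immediate. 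Substituting back gives $\sup \Vert Y\Vert^2/s = \Vert X\Vert_f^2$, as desired. I expect the main obstacle to be keeping the two successive optimisations (over the phases of $\alpha,\beta$ and then over $\phi$) clean and verifying the numerical identity above; once these are in hand, everything else is routine bookkeeping around the trace-zero identity $Y^*Y + YY^* = \Vert Y\Vert_f^2\,I$.
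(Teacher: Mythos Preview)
Your argument is correct, but it takes a considerably longer route than the paper's. The paper first observes that any trace-zero $X\in\mathbb{M}_2$ is unitarily similar to an off-diagonal matrix $\begin{pmatrix}0&x\\y&0\end{pmatrix}$ (since $0=\tfrac{1}{2}\operatorname{tr}X$ lies in $W(X)$, one can pick a unit vector $v$ with $\langle Xv,v\rangle=0$). In that basis, $\alpha X+\beta X^*=\begin{pmatrix}0&\alpha x+\beta\bar y\\ \alpha y+\beta\bar x&0\end{pmatrix}$ has operator norm $\max\{|\alpha x+\beta\bar y|,|\alpha y+\beta\bar x|\}$, and a single application of Cauchy--Schwarz gives $\sup_{|\alpha|^2+|\beta|^2\le 1}\|\alpha X+\beta X^*\|=\sqrt{|x|^2+|y|^2}=\|X\|_f$ immediately. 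Your approach instead stays coordinate-free, exploiting the Cayley--Hamilton identity $Y^*Y+YY^*=\|Y\|_f^2 I$ and a singular-value computation; it is self-contained and yields the explicit closed form $\|\alpha X+\beta X^*\|^2=\tfrac12\bigl(\|Y\|_f^2+\bigl||\alpha|^2-|\beta|^2\bigr|\,\|X^*X-XX^*\|\bigr)$ along the way, at the cost of the two-stage optimisation and the auxiliary identity $4|\det X|^2+\|X^*X-XX^*\|^2=\|X\|_f^4$. Both arrive at the same conclusion via Corollary~\ref{2-matricial range of J3}, but the off-diagonal reduction makes the paper's proof essentially a one-liner.
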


\begin{proof}
	Without loss of generality, we take $X=\begin{pmatrix}
		0 & x \\
		y & 0 \\
	\end{pmatrix}$ where $x,y\in\mathbb{C}$. Let $\alpha,\beta\in\mathbb{C}$. By Cauchy-Schwarz inequality, we have $\vert\alpha x+\beta\overline{y}\vert\leq\sqrt{\vert \alpha\vert^2+\vert\beta\vert^2}\sqrt{\vert x\vert^2+\vert y\vert^2}$. This implies that $\max\{\vert\alpha x+\beta\overline{y}\vert:\vert\alpha\vert^2+\vert\beta\vert^2\leq 1\}=\sqrt{\vert x\vert^2+\vert y\vert^2}$. Therefore, we obtain $\max\{\Vert\alpha X+\beta X^*\Vert:\vert\alpha\vert^2+\vert\beta\vert^2\leq 1\}=\sqrt{\vert x\vert^2+\vert y\vert^2}=\Vert X\Vert_f$. Hence, by Corollary \ref{2-matricial range of J3}, we conclude that $X\in W_2(J_3)$ if and only if $\Vert X\Vert_f\leq 1$.
\end{proof}

Next, we construct an example of a matrix $X\in W_2(J_3)$ for which $\Vert X\Vert_f>1$. This shows that trace zero hypothesis in Corollary \ref{trace zero equivalence} cannot be dropped in general.

\begin{eg}
	Let $u=(\alpha,\beta,\gamma)^t,v=(a,b,c)^t\in\mathbb{R}^3$ be two orthonormal vectors. Consider 
	\begin{align*}
		X=\begin{pmatrix}
			\langle J_3u,u\rangle & \langle J_3v,u\rangle\\
			\langle J_3u,v\rangle & \langle J_3v,v\rangle \\
		\end{pmatrix}=\begin{pmatrix}
		\beta(\alpha+\gamma) & b\alpha+c\beta\\
		a\beta+b\gamma & b(a+c) \\
		\end{pmatrix}.
	\end{align*}
	Now $\Vert X\Vert_f^2=\beta^2(\alpha+\gamma)^2+(b\alpha+c\beta)^2+(a\beta+b\gamma)^2+b^2(a+c)^2:=f(\alpha,\beta,\gamma,a,b,c)$. To construct $X\in W_2(J_3)$ such that $\Vert X\Vert_f>1$, it is sufficient to find two orthonormal vectors $u=(\alpha,\beta,\gamma)^t,v=(a,b,c)^t\in\mathbb{R}^3$ such that $f(\alpha,\beta,\gamma,a,b,c)>1$. Take $u=(-\frac{1}{\sqrt{3}},-\frac{1}{\sqrt{6}},\frac{1}{\sqrt{2}})^t, v=(-\frac{1}{\sqrt{6}},\frac{1+3\sqrt{3}}{4\sqrt{3}},\frac{\sqrt{3}-1}{4})^t$ and check that $u,v$ are two orthonormal vector with $f(\alpha,\beta,\gamma,a,b,c)=1.03123>1$. This produces an example of a matrix $X\in W_2(J_3)$ such that $\Vert X\Vert_f>1$.
\end{eg}

\section{A unital completely positive map on the operator system generated by $J_3$}\label{3-positive}

In this section, we study a unital completely positive map on the operator system generated by $J_3$ and provide a condition under which the set of all unital completely positive maps coincides with the set of all unital contractive maps. As a consequence, we construct a large class of examples that are contained in the $n$th matricial range of $J_3$. The following is the main result of this section.

\begin{thm}\label{CP for J3}
	Let $X\in\mathcal{B}(\mathcal{H})$ with $\Re{(X)}\geq 0$ or $\Re{(X)}\leq 0$ or $\Im{(X)}\geq 0$ or $\Im{(X)}\leq 0$. Suppose $\varphi:\mathcal{OS}(J_3)\rightarrow\mathcal{OS}(X)$ is a unital self-adjoint preserving map with $\varphi(J_3)=X$. Then the following are equivalent.
	\begin{itemize}
		\item[(i)] $\varphi$ is completely positive.
		\item[(ii)] $\varphi$ is contractive.
		\item[(iii)] $\Vert\alpha X+\beta X^*\Vert\leq\sqrt{\vert\alpha\vert^2+\vert\beta\vert^2}$ for all $\alpha,\beta\in\mathbb{C}$.
	\end{itemize}
\end{thm}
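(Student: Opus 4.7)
The implications (i) $\Rightarrow$ (ii) and (ii) $\Leftrightarrow$ (iii) come from Theorem \ref{2-positive on J3} and use no sign hypothesis on $X$, so the substantive content is (iii) $\Rightarrow$ (i). Before attacking this, I reduce to a single sign case: the substitutions $X\mapsto \pm X,\pm iX$ correspond to the unital self-adjoint preserving maps $\mathcal{OS}(J_3)\to\mathcal{OS}(J_3)$ sending $J_3\mapsto \pm J_3$ or $\pm iJ_3$. Each of these maps preserves the condition $BB^*+B^*B\leq I_n$ of Lemma \ref{lemma1 for 2-positivity} (since $(\pm B)(\pm B)^*+(\pm B)^*(\pm B)=BB^*+B^*B$ and similarly for $\pm iB$), hence is completely positive, so (i) is invariant under the four substitutions and the four sign conditions are interchangeable. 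I may therefore assume $\Re(X)\geq 0$.

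By Lemma \ref{n-positivity} applied to $T=J_3$ (noting $0\in W(J_3)^\circ$, since $W(J_3)$ is the disk of radius $1/\sqrt 2$) combined with Lemma \ref{lemma1 for 2-positivity}, establishing (iii) $\Rightarrow$ (i) reduces to showing
\begin{equation*}
	I_n\otimes I+B\otimes X+B^*\otimes X^*\geq 0
\end{equation*}
for every $n\in\mathbb{N}$ and every $B\in\mathbb{M}_n$ with $BB^*+B^*B\leq I_n$. Writing $B=B_1+iB_2$ and $X=X_1+iX_2$ with self-adjoint parts and $X_1\geq 0$, the required inequality takes the form $I+2B_1\otimes X_1-2B_2\otimes X_2\geq 0$; hypothesis (iii) yields the norm bounds $\|X_1\|,\|X_2\|\leq 1/\sqrt 2$, while the assumption on $B$ becomes $B_1^2+B_2^2\leq I_n/2$.

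The boundary case $BB^*+B^*B=I_n$ is handled without the sign hypothesis. By Theorem \ref{Structure of a dilation of B satisfying BB*+B*B=I}, $B$ dilates to $\widetilde B=\bigoplus_j\begin{pmatrix}0 & \alpha_j\\ \beta_j & 0\end{pmatrix}$ with $|\alpha_j|^2+|\beta_j|^2=1$. Applying (iii) block-by-block, exactly as in Lemma \ref{boundary} and the proof of Theorem \ref{2-positive on J3}, shows $I\otimes I+\widetilde B\otimes X+\widetilde B^*\otimes X^*\geq 0$, and compressing yields the required inequality for $B$.

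The substantive step is the interior case $BB^*+B^*B<I_n$, where the direct boundary-dilation strategy fails by Examples \ref{counter example 1} and \ref{counter example 2}. This is where the sign hypothesis $X_1\geq 0$ is indispensable. Setting $D=I_n-BB^*-B^*B\geq 0$, the plan is to construct an augmented operator $\widehat B$ on a larger space, using $D^{1/2}$ in the extra blocks, so that the new cross terms appearing in $I\otimes I+\widehat B\otimes X+\widehat B^*\otimes X^*$ couple only to $X_1\geq 0$ rather than to the indefinite $X_2$; then the semi-definiteness of $X_1$ lets the extra contributions be recognised as positive squares while the main $B$-block is treated by the boundary argument of the previous paragraph. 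The principal obstacle is carrying out this $X$-adapted augmentation: a naive enlargement such as $\widehat B=\begin{pmatrix}B & D^{1/2}\\ 0 & -B^*\end{pmatrix}$ produces cross terms mixing $X_1$ and $X_2$, and the obstructions in Examples \ref{counter example 1}, \ref{counter example 2} (whose $\Re(B)$ is indefinite) must be neutralised by a choice of augmenting blocks that explicitly routes all indefinite contributions through $X_1$. That no such augmentation exists for a general $X$ without a sign hypothesis is exactly what leaves room for the contractive-but-not-completely-positive map on $\mathcal{OS}(J_3)$ announced in Section \ref{remarks and examples}.
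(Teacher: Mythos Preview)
Your reductions and the boundary case are fine and agree with the paper. The gap is in the interior case: you only state a \emph{plan} to construct an augmented $\widehat B$ and then explain why the naive choice fails, without ever producing a construction that works. As you yourself note, Examples \ref{counter example 1} and \ref{counter example 2} show that a dilation of $B$ to some $\widehat B$ with $\widehat B\widehat B^*+\widehat B^*\widehat B=I$ need not exist, so your strategy of ``routing the indefinite part through $X_1$'' via an enlarged $\widehat B$ is not executed and it is unclear that it can be.

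The paper's argument bypasses this obstruction entirely by \emph{not} enlarging the space. Working with the self-adjoint parameters $B_1,B_2$ (so $B_1^2+B_2^2\le 2I_n$), one sets $\mathcal{E}=2I_n-B_1^2-B_2^2\ge 0$ and $B_3=\sqrt{B_1^2+\mathcal{E}}$. Then $-B_3\le B_1\le B_3$, and in the case $X_1\ge 0$ one writes
\[
I_n\otimes I+B_1\otimes X_1+B_2\otimes X_2=\bigl(I_n\otimes I-B_3\otimes X_1+B_2\otimes X_2\bigr)+(B_1+B_3)\otimes X_1.
\]
The first bracket is $\ge 0$ by the boundary case (since $B_3^2+B_2^2=2I_n$, i.e.\ Lemma \ref{boundary case}), and the second term is $\ge 0$ because $B_1+B_3\ge 0$ and $X_1\ge 0$. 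No dilation of $B$ is needed; the sign hypothesis is used exactly to make the correction term a tensor of two positives. This is the missing idea in your sketch.

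A small factual correction: your final sentence refers to a ``contractive-but-not-completely-positive map on $\mathcal{OS}(J_3)$ announced in Section \ref{remarks and examples}''. There is no such example in the paper; Example \ref{weighted jordan block} concerns a weighted shift, and whether contractive implies completely positive on $\mathcal{OS}(J_3)$ without the sign hypothesis is left open as Conjecture \ref{conjecture 1}.
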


To prove this, we need the following lemmas.

\begin{lem}\label{positive element modified}
	Let $T\in\mathcal{B(\mathcal{H})}$ with $0\in W(T)^\circ$. Then every positive element in $\mathbb{M}_n(\mathcal{OS}(T))$ is of the form $A\otimes I+B_1\otimes \Re{(T)}+B_2\otimes \Im{(T)}$ where $A\geq 0$ and $B_1,B_2\in(\mathbb{M}_n)_{\text{s.a.}}$.
\end{lem}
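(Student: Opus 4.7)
The plan is to reduce this directly to Lemma \ref{form of positive element}, which has already been established earlier in the paper under exactly the same hypothesis $0\in W(T)^\circ$, and then rewrite the representation in terms of the real and imaginary parts of $T$.

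First, I would apply Lemma \ref{form of positive element} to an arbitrary positive element of $\mathbb{M}_n(\mathcal{OS}(T))$. This gives that any such element can be written as $A\otimes I+B\otimes T+B^*\otimes T^*$ for some $A\geq 0$ and $B\in\mathbb{M}_n$. Next, I would use $T=\Re(T)+i\,\Im(T)$ and $T^*=\Re(T)-i\,\Im(T)$ to expand
\begin{align*}
B\otimes T+B^*\otimes T^*
&= B\otimes\Re(T)+iB\otimes\Im(T)+B^*\otimes\Re(T)-iB^*\otimes\Im(T)\\
&= (B+B^*)\otimes\Re(T)+i(B-B^*)\otimes\Im(T).
\end{align*}
Setting $B_1:=B+B^*$ and $B_2:=i(B-B^*)$, one checks immediately that $B_1^*=B_1$ and $B_2^*=-i(B^*-B)=i(B-B^*)=B_2$, so both $B_1$ and $B_2$ lie in $(\mathbb{M}_n)_{\text{s.a.}}$. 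This yields the required representation $A\otimes I+B_1\otimes\Re(T)+B_2\otimes\Im(T)$ with $A\geq 0$.

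There is no real obstacle here; the only thing to be a touch careful about is the verification that $i(B-B^*)$ is self-adjoint (a routine computation), and to note that no uniqueness is claimed, so we need only exhibit one such decomposition. The hypothesis $0\in W(T)^\circ$ is not used again in this step because it has already been absorbed into the invocation of Lemma \ref{form of positive element}, where it is genuinely needed to conclude that the coefficient of $T^*$ is the adjoint of the coefficient of $T$.
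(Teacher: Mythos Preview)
Your proof is correct and follows essentially the same approach as the paper: invoke Lemma \ref{form of positive element} to write the positive element as $A\otimes I+B\otimes T+B^*\otimes T^*$, then set $B_1=B+B^*$ and $B_2=i(B-B^*)$ and observe these are self-adjoint. The paper's proof is identical in content, only slightly more terse.
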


\begin{proof}
	Any positive element in $\mathbb{M}_n(\mathcal{OS}(T))$ is of the form $A\otimes I+B\otimes T+B^*\otimes T^*$ for some $A\geq 0$ and $B\in\mathbb{M}_n$ (by Lemma \ref{form of positive element}). Now observe that
	\begin{align*}
		A\otimes I+B\otimes T+B^*\otimes T^*=A\otimes I+B_1\otimes \Re{(T)}+B_2\otimes \Im{(T)}
	\end{align*}
	where $B_1=B+B^*, B_2=i(B-B^*)$ are self-adjoint. This completes the proof.
\end{proof}

\begin{lem}\label{n-positivity modified}
	Let $T\in\mathcal{B(\mathcal{H})}$ with $0\in W(T)^\circ$ and $X\in\mathcal{B(\mathcal{H})}$.  A unital selfadjoint preserving map $\varphi:\mathcal{OS}(T)\rightarrow\mathcal{OS}(X)$ with $\varphi(T)=X$ is $n$-positive if and only if $I_n\otimes I+B_1\otimes \Re{(X)}+B_2\otimes \Im{(X)}\geq 0$ whenever $I_n\otimes I+B_1\otimes \Re{(T)}+B_2\otimes \Im{(T)}\geq 0$ where $B_1,B_2\in(\mathbb{M}_n)_{\text{s.a.}}$.
\end{lem}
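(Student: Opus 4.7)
The plan is to mimic the proof of Lemma~\ref{n-positivity}, but using the real/imaginary parts decomposition provided by Lemma~\ref{positive element modified} instead of the $T,T^*$ decomposition from Lemma~\ref{form of positive element}. The forward direction ``$\Rightarrow$'' is immediate: if $\varphi$ is $n$-positive and $I_n\otimes I+B_1\otimes\Re(T)+B_2\otimes\Im(T)\geq 0$ with $B_1,B_2$ self-adjoint, then applying $\varphi_n$ (which respects self-adjointness and maps $\Re(T)\mapsto\Re(X)$, $\Im(T)\mapsto\Im(X)$ since $\varphi$ is unital and self-adjoint preserving) yields $I_n\otimes I+B_1\otimes\Re(X)+B_2\otimes\Im(X)\geq 0$.

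For the reverse direction ``$\Leftarrow$'', I would start with an arbitrary positive element $P\in\mathbb{M}_n(\mathcal{OS}(T))$ and invoke Lemma~\ref{positive element modified} to write
\[
P=A\otimes I+B_1\otimes\Re(T)+B_2\otimes\Im(T),
\]
with $A\geq 0$ and $B_1,B_2\in(\mathbb{M}_n)_{\text{s.a.}}$. First I would treat the strictly positive case $A>0$. Conjugating $P$ by $A^{-1/2}\otimes I$ (which preserves positivity), I obtain
\[
I_n\otimes I+(A^{-1/2}B_1 A^{-1/2})\otimes\Re(T)+(A^{-1/2}B_2 A^{-1/2})\otimes\Im(T)\geq 0.
\]
Since $A^{-1/2}B_j A^{-1/2}$ remains self-adjoint for $j=1,2$, the hypothesis applies and yields
\[
I_n\otimes I+(A^{-1/2}B_1 A^{-1/2})\otimes\Re(X)+(A^{-1/2}B_2 A^{-1/2})\otimes\Im(X)\geq 0.
\]
Conjugating back by $A^{1/2}\otimes I$ gives $\varphi_n(P)=A\otimes I+B_1\otimes\Re(X)+B_2\otimes\Im(X)\geq 0$.

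Finally, to handle the case where $A\geq 0$ is only positive semidefinite, I would replace $A$ by $A_\epsilon=A+\epsilon I_n>0$, apply the strictly positive case to obtain
\[
A_\epsilon\otimes I+B_1\otimes\Re(X)+B_2\otimes\Im(X)\geq 0,
\]
and then let $\epsilon\to 0^+$ to conclude $\varphi_n(P)\geq 0$. This shows $\varphi$ is $n$-positive. I do not anticipate any genuine obstacle here; the argument is a direct adaptation of Lemma~\ref{n-positivity} once the positive-element structure result Lemma~\ref{positive element modified} is in hand, with the only subtle point being the observation that conjugation by the self-adjoint matrix $A^{-1/2}\otimes I$ preserves the self-adjointness of the coefficients of $\Re(T)$ and $\Im(T)$, which is what allows us to invoke the hypothesis legitimately.
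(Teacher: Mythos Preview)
Your proposal is correct and follows exactly the approach the paper indicates: the paper's proof simply states that it ``follows along similar lines to that of Lemma~\ref{n-positivity},'' and your argument is precisely that adaptation, using Lemma~\ref{positive element modified} in place of Lemma~\ref{form of positive element} and noting that conjugation by $A^{-1/2}\otimes I$ preserves self-adjointness of the coefficients.
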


\begin{proof}
	The proof follows along similar lines to that of Lemma \ref{n-positivity}.
\end{proof}

\begin{lem}\label{self-adjoint structure theorem}
	Let $B_1,B_2\in(\mathbb{M}_n)_{\text{s.a.}}$. Then $I_n\otimes I_3+B_1\otimes\Re{(J_3)}+B_2\otimes \Im{(J_3)}\geq 0$ if and only if $B_1^2+B_2^2\leq 2I_n$.
\end{lem}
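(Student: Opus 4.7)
The plan is to reduce this to Lemma \ref{lemma1 for 2-positivity} by rewriting the self-adjoint combination $B_1\otimes\Re(J_3) + B_2\otimes\Im(J_3)$ in terms of $J_3$ and $J_3^*$ via a single non-self-adjoint matrix. Since $\Re(J_3) = \tfrac{1}{2}(J_3 + J_3^*)$ and $\Im(J_3) = \tfrac{1}{2i}(J_3 - J_3^*)$, I would expand
\[
B_1\otimes\Re(J_3) + B_2\otimes\Im(J_3) = C\otimes J_3 + C^*\otimes J_3^*, \qquad C := \tfrac{1}{2}(B_1 - iB_2),
\]
noting that $C^* = \tfrac{1}{2}(B_1 + iB_2)$ precisely because $B_1, B_2$ are self-adjoint. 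So the positivity in question is exactly $I_n\otimes I_3 + C\otimes J_3 + C^*\otimes J_3^* \geq 0$.

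Next, I would apply Lemma \ref{lemma1 for 2-positivity} (with $B$ replaced by $C\in\mathbb{M}_n$) to translate this into $CC^* + C^*C \leq I_n$. A direct multiplication using $B_1^* = B_1$, $B_2^* = B_2$ gives
\[
CC^* + C^*C = \tfrac{1}{4}\bigl[(B_1 - iB_2)(B_1 + iB_2) + (B_1 + iB_2)(B_1 - iB_2)\bigr] = \tfrac{1}{2}(B_1^2 + B_2^2),
\]
since the cross terms $\pm i(B_1B_2 - B_2B_1)$ cancel. Hence $CC^* + C^*C \leq I_n$ is equivalent to $B_1^2 + B_2^2 \leq 2I_n$, which finishes the proof.

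There is no real obstacle here: once the correct substitution $C = \tfrac{1}{2}(B_1 - iB_2)$ is identified, everything reduces to the already-established Lemma \ref{lemma1 for 2-positivity} and a one-line algebraic identity. The only subtle point worth stating explicitly is that this substitution is a bijection between self-adjoint pairs $(B_1, B_2)$ in $(\mathbb{M}_n)_{\text{s.a.}}^2$ and arbitrary matrices $C \in \mathbb{M}_n$, so both directions of the equivalence transfer cleanly.
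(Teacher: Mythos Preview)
Your proposal is correct and takes essentially the same approach as the paper: the paper also rewrites the expression as $I_n\otimes I_3 + B\otimes J_3 + B^*\otimes J_3^*$ with $B = \tfrac{1}{2}(B_1 - iB_2)$ and then invokes Lemma~\ref{lemma1 for 2-positivity}. Your write-up simply makes the algebraic verification of $BB^* + B^*B = \tfrac{1}{2}(B_1^2 + B_2^2)$ more explicit than the paper does.
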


\begin{proof}
	Observe that $I_n\otimes I_3+B_1\otimes\Re{(J_3)}+B_2\otimes \Im{(J_3)}=I_n\otimes I_3+B\otimes J_3+B^*\otimes J_3^*$ where$B=\frac{B_1-iB_2}{2}$ and therefore the proof follows by Lemma \ref{lemma1 for 2-positivity}.
\end{proof}

\begin{lem}\label{self-adjoint dilation}
	Let $B_1,B_2\in(\mathbb{M}_n)_{\text{s.a.}}$ be such that $B_1^2+B_2^2=I_n$. Then $B_1, B_2$ have self-adjoint dilations $\widetilde{B}_1, \widetilde{B}_2$ of the form
	\begin{align*}
		\widetilde{B}_1=\bigoplus_{\alpha}\begin{pmatrix}
			0 & \alpha \\
			\overline{\alpha} & 0 \\
		\end{pmatrix},\quad \widetilde{B}_2=\bigoplus_{\beta}\begin{pmatrix}
		0 & \beta \\
		\overline{\beta} & 0 \\
		\end{pmatrix}
	\end{align*}
	where $\vert\alpha\vert^2+\vert\beta\vert^2=1$.
\end{lem}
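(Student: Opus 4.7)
\textbf{Proof plan for Lemma \ref{self-adjoint dilation}.} The natural approach is to reduce this statement to the already-proved Theorem \ref{Structure of a dilation of B satisfying BB*+B*B=I}, exploiting the bijection between self-adjoint pairs $(B_1,B_2)$ and general matrices via the Cartesian decomposition. Concretely, I would set
\[
    B \;=\; \frac{B_1 - iB_2}{\sqrt{2}},
\]
so that $B_1 = \tfrac{B+B^*}{\sqrt{2}}$ and $B_2 = \tfrac{i(B-B^*)}{\sqrt{2}}$.

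The first step is a short computation, using self-adjointness of $B_1,B_2$, to verify that the cross terms involving $B_1B_2$ and $B_2B_1$ cancel in $BB^*+B^*B$, giving
\[
    BB^* + B^*B \;=\; B_1^2 + B_2^2 \;=\; I_n.
\]
By Theorem \ref{Structure of a dilation of B satisfying BB*+B*B=I}, $B$ therefore admits a dilation
\[
    \widetilde{B} \;=\; \bigoplus_{j}\begin{pmatrix} 0 & \alpha_j \\ \beta_j & 0 \end{pmatrix}, \qquad |\alpha_j|^2 + |\beta_j|^2 = 1.
\]
I would then \emph{define} the candidate dilations by applying the inverse Cartesian formula to $\widetilde{B}$, namely
\[
    \widetilde{B}_1 \;:=\; \frac{\widetilde{B}+\widetilde{B}^*}{\sqrt{2}}, \qquad \widetilde{B}_2 \;:=\; \frac{i(\widetilde{B}-\widetilde{B}^*)}{\sqrt{2}}.
\]
Since compression is linear and commutes with taking the adjoint, and since $\widetilde{B}$ dilates $B$, it is immediate that $\widetilde{B}_1,\widetilde{B}_2$ are self-adjoint dilations of $B_1,B_2$ respectively.

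It only remains to read off the block structure. Each $2\times 2$ block of $\widetilde{B}$ has zero diagonal, and the anti-diagonal form is preserved by the map $M \mapsto \tfrac{M+M^*}{\sqrt{2}}$ and by $M \mapsto \tfrac{i(M-M^*)}{\sqrt{2}}$. A direct per-block computation gives
\[
    \widetilde{B}_1 \;=\; \bigoplus_{j}\begin{pmatrix} 0 & \alpha_j' \\ \overline{\alpha_j'} & 0 \end{pmatrix}, \qquad \widetilde{B}_2 \;=\; \bigoplus_{j}\begin{pmatrix} 0 & \beta_j' \\ \overline{\beta_j'} & 0 \end{pmatrix},
\]
with $\alpha_j' = \tfrac{\alpha_j + \overline{\beta_j}}{\sqrt{2}}$ and $\beta_j' = \tfrac{i(\alpha_j - \overline{\beta_j})}{\sqrt{2}}$. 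The parallelogram identity then yields
\[
    |\alpha_j'|^2 + |\beta_j'|^2 \;=\; \tfrac{1}{2}\bigl(|\alpha_j + \overline{\beta_j}|^2 + |\alpha_j - \overline{\beta_j}|^2\bigr) \;=\; |\alpha_j|^2 + |\beta_j|^2 \;=\; 1,
\]
which is exactly the required normalisation. I do not anticipate any genuine obstacle: the work is essentially bookkeeping once the correct change of variables is in place, and the entire argument is driven by Theorem \ref{Structure of a dilation of B satisfying BB*+B*B=I}. The only mild subtlety is choosing the scaling factor $1/\sqrt{2}$ correctly so that $BB^*+B^*B$ lands exactly on $I_n$ rather than $\tfrac{1}{2}I_n$, and again so that the normalisation $|\alpha_j'|^2+|\beta_j'|^2=1$ of the final blocks emerges without a stray factor.
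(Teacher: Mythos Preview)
Your proposal is correct and follows essentially the same route as the paper: reduce to Theorem \ref{Structure of a dilation of B satisfying BB*+B*B=I} via the Cartesian decomposition, apply the structure theorem, and then translate back to obtain the self-adjoint $2\times 2$ blocks with the parallelogram identity giving the normalisation. The only cosmetic difference is that the paper sets $B=B_1+iB_2$ (so that $BB^*+B^*B=2I_n$) and then applies the theorem to $B/\sqrt{2}$, whereas you absorb the $1/\sqrt{2}$ into the definition of $B$ from the outset.
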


\begin{proof}
	Let $B=B_1+iB_2$. Then $BB^*+B^*B=2(B_1^2+B_2^2)=2I_n$. Therefore, by Lemma \ref{Structure of a dilation of B satisfying BB*+B*B=I}, $\frac{B}{\sqrt{2}}$ has a dilation $\widetilde{B}$ of the form
	\begin{align*}
		\widetilde{B}=\bigoplus_{\substack{a,b\in\mathbb{C} \\ \vert a\vert^2+\vert b\vert^2=1}} \begin{pmatrix}
			0 & a \\
			b & 0 \\
		\end{pmatrix}.
	\end{align*}
	This implies that $B_1, B_2$ have dilations $\widetilde{B}_1,\widetilde{B}_2$ of the form
	\begin{align*}
		\widetilde{B}_1=\bigoplus_{\substack{a,b\in\mathbb{C} \\ \vert a\vert^2+\vert b\vert^2=1}} \begin{pmatrix}
			0 & \frac{a+\overline{b}}{\sqrt{2}} \\
			\frac{\overline{a}+b}{\sqrt{2}} & 0 \\
		\end{pmatrix},\; \widetilde{B}_2=\bigoplus_{\substack{a,b\in\mathbb{C} \\ \vert a\vert^2+\vert b\vert^2=1}} \begin{pmatrix}
		0 & \frac{a-\overline{b}}{\sqrt{2}i} \\
		\frac{b-\overline{a}}{\sqrt{2}i} & 0 \\
		\end{pmatrix}.
	\end{align*}
	Take $\alpha=\frac{a+\overline{b}}{\sqrt{2}}, \beta=\frac{a-\overline{b}}{\sqrt{2}i}$ and check that $\vert\alpha\vert^2+\vert\beta\vert^2=1$. Hence the conclusion follows.
\end{proof}

\begin{lem}\label{norm}
	Let $X\in\mathcal{B}(\mathcal{H})$. Then $\Vert\alpha X+\beta X^*\Vert\leq\sqrt{\vert\alpha\vert^2+\vert\beta\vert^2}$ for all $\alpha,\beta\in\mathbb{C}$ if and only if $\Vert \alpha \Re{(X)}+\beta \Im{(X)}\Vert\leq\sqrt{\frac{\vert\alpha\vert^2+\vert\beta\vert^2}{2}}$ for all $\alpha,\beta\in\mathbb{C}$.
\end{lem}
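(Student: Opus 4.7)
The plan is to prove the equivalence by a direct change of variables, exploiting the elementary identities relating $\{X, X^*\}$ to $\{\Re(X), \Im(X)\}$. Specifically, using $X = \Re(X) + i\Im(X)$ and $X^* = \Re(X) - i\Im(X)$, I obtain the algebraic rewrite
\[
\alpha X + \beta X^* = (\alpha+\beta)\,\Re(X) + i(\alpha-\beta)\,\Im(X),
\]
and conversely, from $\Re(X) = (X+X^*)/2$ and $\Im(X) = (X-X^*)/(2i)$,
\[
\alpha\,\Re(X) + \beta\,\Im(X) = \tfrac{1}{2}(\alpha - i\beta)\,X + \tfrac{1}{2}(\alpha + i\beta)\,X^*.
\]
The entire proof then reduces to checking that the map on coefficient pairs $(\alpha,\beta) \mapsto (\alpha', \beta')$ defined by these rewrites scales the $\ell^2$-norm by exactly the factor $\sqrt{2}$.

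For the forward direction ($\Rightarrow$), assume the first inequality. Given arbitrary $\alpha, \beta \in \mathbb{C}$, set $\alpha' = \tfrac{1}{2}(\alpha - i\beta)$ and $\beta' = \tfrac{1}{2}(\alpha + i\beta)$ and apply the hypothesis to these coefficients. A direct expansion (essentially the parallelogram identity) gives
\[
|\alpha'|^2 + |\beta'|^2 = \tfrac{1}{4}\bigl(|\alpha - i\beta|^2 + |\alpha + i\beta|^2\bigr) = \tfrac{1}{2}\bigl(|\alpha|^2 + |\beta|^2\bigr),
\]
so that $\|\alpha\,\Re(X) + \beta\,\Im(X)\| = \|\alpha' X + \beta' X^*\| \leq \sqrt{(|\alpha|^2+|\beta|^2)/2}$, as required.

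For the reverse direction ($\Leftarrow$), set $\alpha'' = \alpha + \beta$ and $\beta'' = i(\alpha - \beta)$ and apply the hypothesis to these coefficients. The parallelogram identity gives $|\alpha''|^2 + |\beta''|^2 = |\alpha + \beta|^2 + |\alpha - \beta|^2 = 2(|\alpha|^2 + |\beta|^2)$, so
\[
\|\alpha X + \beta X^*\| = \|\alpha''\,\Re(X) + \beta''\,\Im(X)\| \leq \sqrt{(|\alpha''|^2+|\beta''|^2)/2} = \sqrt{|\alpha|^2 + |\beta|^2}.
\]
There is no substantive obstacle here; the proof is purely algebraic, and the only thing to verify carefully is the bookkeeping of the factor $\sqrt{2}$ between the two normalizations, which is exactly what the two substitutions above accomplish.
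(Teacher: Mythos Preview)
Your proof is correct and follows the same approach as the paper, which simply says ``the proof follows by writing $\Re(X)=\frac{X+X^*}{2}$ and $\Im(X)=\frac{X-X^*}{2i}$'' without spelling out the substitutions. You have supplied exactly the change-of-variables and parallelogram-identity details that the paper leaves implicit.
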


\begin{proof}
	The proof follows by writing $\Re{(X)}=\frac{X+X^*}{2}, \text{ and }\Im{(X)}=\frac{X-X^*}{2i}$.
\end{proof}

\begin{lem}\label{boundary case}
	Let $X\in\mathcal{B(\mathcal{H})}$ be such that $\Vert \alpha X+\beta X^*\Vert\leq\sqrt{\vert\alpha\vert^2+\vert\beta\vert^2}$ for all $\alpha,\beta\in\mathbb{C}$. Then $I_n\otimes I+B_1\otimes \Re{(X)}+B_2\otimes \Im{(X)}\geq 0$ whenever $B_1^2+B_2^2=2I_n$ where $B_1,B_2\in(\mathbb{M}_n)_{\text{s.a.}}$. 
\end{lem}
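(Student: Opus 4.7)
The plan is to reduce the problem to block-diagonal dilations of $B_1,B_2$ via Lemma \ref{self-adjoint dilation}, verify positivity on each $2\times 2$ block using the norm hypothesis rephrased through Lemma \ref{norm}, and then compress back down.

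First, by Lemma \ref{norm}, the hypothesis is equivalent to
\[
\|\alpha \Re(X)+\beta \Im(X)\|\leq \sqrt{\tfrac{|\alpha|^2+|\beta|^2}{2}}\qquad\text{for all }\alpha,\beta\in\mathbb{C}.
\]
Next, since $(B_1/\sqrt{2})^2+(B_2/\sqrt{2})^2=I_n$, Lemma \ref{self-adjoint dilation} supplies self-adjoint matrices $C_1,C_2$ of the form
\[
C_1=\bigoplus_{j}\begin{pmatrix} 0 & \alpha_j \\ \overline{\alpha_j} & 0 \end{pmatrix},\qquad
C_2=\bigoplus_{j}\begin{pmatrix} 0 & \beta_j \\ \overline{\beta_j} & 0 \end{pmatrix}
\]
with $|\alpha_j|^2+|\beta_j|^2=1$ for every $j$, such that $B_k/\sqrt{2}$ is a compression of $C_k$ for $k=1,2$. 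Equivalently, $\widetilde{B}_k:=\sqrt{2}\,C_k$ is a self-adjoint dilation of $B_k$, and both dilations live on the same ambient space with the same compression projection.

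Now I would show positivity of $I\otimes I+\widetilde{B}_1\otimes \Re(X)+\widetilde{B}_2\otimes \Im(X)$ block by block. Using the direct sum structure, each summand takes the form
\[
\begin{pmatrix} I & \sqrt{2}\alpha_j \Re(X)+\sqrt{2}\beta_j \Im(X) \\ \sqrt{2}\overline{\alpha_j} \Re(X)+\sqrt{2}\overline{\beta_j} \Im(X) & I \end{pmatrix},
\]
which by Schur positivity is $\geq 0$ iff $\|\sqrt{2}\alpha_j \Re(X)+\sqrt{2}\beta_j \Im(X)\|\leq 1$. Since $|\alpha_j|^2+|\beta_j|^2=1$, the reformulated hypothesis gives exactly this bound. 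Therefore each block is positive, so
\[
I\otimes I+\widetilde{B}_1\otimes \Re(X)+\widetilde{B}_2\otimes \Im(X)\geq 0.
\]
Finally, compressing to the original subspace $\mathbb{C}^n\otimes\mathcal{H}$ via the projection that makes $B_k$ a compression of $\widetilde{B}_k$ yields
\[
I_n\otimes I+B_1\otimes \Re(X)+B_2\otimes \Im(X)\geq 0,
\]
as required.

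The main obstacle, I expect, is not any single computation but making sure the dilation from Lemma \ref{self-adjoint dilation} for the pair $(B_1/\sqrt{2},B_2/\sqrt{2})$ is realized \emph{simultaneously} on one common space with one common compression projection (so that the compressed tensor sum really does produce the desired expression). Once that is in place, the block-by-block Schur-positivity reduction is automatic.
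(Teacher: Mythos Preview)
Your proposal is correct and follows essentially the same route as the paper's proof: apply Lemma \ref{self-adjoint dilation} (after rescaling by $\sqrt{2}$), verify positivity on each $2\times 2$ block using the norm hypothesis via Lemma \ref{norm}, and compress. Your concern about simultaneity is already built into Lemma \ref{self-adjoint dilation}: its proof constructs $\widetilde{B}_1,\widetilde{B}_2$ as the real and imaginary parts of a single dilation $\widetilde{B}$ of $B=B_1+iB_2$, so both are compressed via the same isometry.
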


\begin{proof}
	Indeed, by Lemma \ref{self-adjoint dilation}, $B_1,B_2$ have self-adjoint dilations $\widetilde{B}_1,\widetilde{B}_2$ of the form
		\begin{align*}
		\widetilde{B}_1=\bigoplus_{\alpha}\begin{pmatrix}
			0 & \alpha \\
			\overline{\alpha} & 0 \\
		\end{pmatrix},\quad \widetilde{B}_2=\bigoplus_{\beta}\begin{pmatrix}
			0 & \beta \\
			\overline{\beta} & 0 \\
		\end{pmatrix}
	\end{align*}
	where $\vert\alpha\vert^2+\vert\beta\vert^2=2$. Let $\alpha,\beta\in\mathbb{C}$ be such that $\vert\alpha\vert^2+\vert\beta\vert^2=2$. Then $\Vert\alpha X+\beta X^*\Vert\leq \sqrt{2}$. Therefore, by Lemma \ref{norm}, we have $\Vert \alpha \Re{(X)}+ \beta \Im{(X)}\Vert\leq 1$. This implies that $I_n\otimes I+\widetilde{B}_1\otimes \Re{(X)}+\widetilde{B}_2\otimes \Im{(X)}\geq 0$ and hence $I_n\otimes I+B_1\otimes \Re{(X)}+B_2\otimes \Im{(X)}\geq 0$.
\end{proof}

\begin{proof}[\textit{\textbf{Proof of Theorem \ref{CP for J3}}}]
	$(i)\Rightarrow (ii)$. This is immediate.\\
	$(ii)\Rightarrow (iii)$. This follows from Lemma \ref{lemma2 for 2-positivity}.\\
	$(iii)\Rightarrow (i)$. Let $X_1=\Re{(X)}, X_2=\Im{(X)}$. Suppose $B_1,B_2\in(\mathbb{M}_n)_{\text{s.a.}}$ are such that $I_n\otimes I_3+B_1\otimes\Re{(J_3)}+B_2\otimes \Im{(J_3)}\geq 0$. Then, by Lemma \ref{self-adjoint structure theorem}, we have $B_1^2+B_2^2\leq 2I_n$. Let $\mathcal{E}\geq 0$ be such that $B_1^2+B_2^2+\mathcal{E}=2I_n$. Take $B_3=\sqrt{B_1^2+\mathcal{E}}$. Then $-\vert B_3\vert\leq B_1\leq\vert B_3\vert$. Suppose $\mathcal{E}_1,\mathcal{E}_2\geq 0$ are such that $B_1=-\vert B_3\vert+\mathcal{E}_1=\vert B_3\vert-\mathcal{E}_2$. We now consider the cases that $X_1\geq 0$ or $X_1\leq 0$. The cases that $X_2\geq 0$ or $X_2\leq 0$ can be argued along similar lines (by taking $B_3=\sqrt{B_2^2+\mathcal{E}}$).
	
	\underline{Case I}: Let $X_1\geq 0$. We write $I_n\otimes I+B_1\otimes X_1+B_2\otimes X_2=I_n\otimes I-\vert B_3\vert\otimes X_1+B_2\otimes X_2+\mathcal{E}_1\otimes X_1$. Since $(-\vert B_3\vert)^2+B_2^2=2I_n$, by Lemma \ref{boundary case}, we have $I_n\otimes I-\vert B_3\vert\otimes X_1+B_2\otimes X_2\geq 0$ and hence $I_n\otimes I+B_1\otimes X_1+B_2\otimes X_2\geq 0$. Therefore, by Lemma \ref{n-positivity modified}, $\varphi$ is $n$-positive for every $n$, and hence completely positive.

	\underline{Case II}: Let $X_1\leq 0$. We write $I_n\otimes I+B_1\otimes X_1+B_2\otimes X_2=I_n\otimes I+\vert B_3\vert\otimes X_1+B_2\otimes X_2-\mathcal{E}_2\otimes X_1$. Since $\vert B_3\vert^2+B_2^2=2I_n$, by Lemma \ref{boundary case}, we have $I_n\otimes I+\vert B_3\vert\otimes X_1+B_2\otimes X_2\geq 0$ and hence $I_n\otimes I+B_1\otimes X_1+B_2\otimes X_2\geq 0$. Therefore, by Lemma \ref{n-positivity modified}, $\varphi$ is $n$-positive for every $n$, and hence completely positive. 
\end{proof}

The following corollary provides a large class of examples that are contained in $W_n(J_3)$.

\begin{cor}\label{Matricial range of J3 containment}
	Let $n\in\mathbb{N}$. Then
	\small
	\begin{align*}
		\left\{X\in\mathbb{M}_n: XX^*+X^*X\leq I_n\right\}\subseteq W_n(J_3)\subseteq\left\{X\in\mathbb{M}_n:\Vert\alpha X+\beta X^*\Vert\leq\sqrt{\vert\alpha\vert^2+\vert\beta\vert^2} \text{ for all }\alpha,\beta\in\mathbb{C}\right\}.
	\end{align*}
	\normalsize
\end{cor}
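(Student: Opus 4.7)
The plan is to prove the two inclusions separately, with the right one being routine and the left one requiring an explicit dilation.

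For the right inclusion, any $X\in W_n(J_3)$ has the form $\varphi(J_3)$ for some UCP map $\varphi:\mathcal{OS}(J_3)\to\mathbb{M}_n$. Since UCP maps are contractive, combining this with the norm computation from Lemma \ref{lemma2 for 2-positivity} gives, for all $\alpha,\beta\in\mathbb{C}$,
\begin{align*}
\|\alpha X+\beta X^*\|=\|\varphi(\alpha J_3+\beta J_3^*)\|\leq\|\alpha J_3+\beta J_3^*\|=\sqrt{|\alpha|^2+|\beta|^2},
\end{align*}
which is the right containment.

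For the left inclusion, given $X\in\mathbb{M}_n$ with $XX^*+X^*X\leq I_n$, I would set $D=I_n-XX^*-X^*X\geq 0$ and appeal to Lemma \ref{equivalent criterion for CP}: it suffices to realize $X$ as the compression $V^*(J_3\otimes I_{2n})V$ for some isometry $V:\mathbb{C}^n\to\mathbb{C}^3\otimes\mathbb{C}^{2n}$. My candidate is the column of $2n\times n$ blocks
\begin{align*}
V=\begin{pmatrix}A\\ B\\ C\end{pmatrix},\quad A=\begin{pmatrix}X^*/\sqrt{2}\\ (D/2)^{1/2}\end{pmatrix},\quad B=\begin{pmatrix}I_n/\sqrt{2}\\ 0\end{pmatrix},\quad C=\begin{pmatrix}X/\sqrt{2}\\ 0\end{pmatrix}.
\end{align*}
The two block computations $V^*V=A^*A+B^*B+C^*C$ and $V^*(J_3\otimes I_{2n})V=A^*B+B^*C$ then yield, respectively, $\tfrac{1}{2}(XX^*+D+I_n+X^*X)=I_n$ and $\tfrac{X}{2}+\tfrac{X}{2}=X$, finishing the proof.

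The heuristic behind the ansatz is simple: choosing $B$ essentially as $I_n/\sqrt{2}$ forces $A^*B$ and $B^*C$ to each contribute $X/2$, which in turn pins down the top entries of $A$ and $C$ as $X^*/\sqrt{2}$ and $X/\sqrt{2}$; the block $(D/2)^{1/2}$ sitting in $A$ then absorbs whatever slack is left by the inequality hypothesis and ensures $V^*V=I_n$. There is no real obstacle here beyond guessing this block structure; the verification is just block-matrix bookkeeping. It is worth noting that this argument produces a UCP realization of $X$ even in cases (such as Example \ref{counter example 1}) where no dilation $\widetilde{B}$ with $\widetilde{B}\widetilde{B}^*+\widetilde{B}^*\widetilde{B}=I$ exists, so the compression route through $J_3\otimes I$ is strictly more flexible than the dilation route of Section \ref{dilation}.
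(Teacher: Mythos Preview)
Your proof is correct; both inclusions go through exactly as you claim. The verification of $V^*V=I_n$ and $V^*(J_3\otimes I_{2n})V=X$ is straightforward block arithmetic, and Lemma~\ref{equivalent criterion for CP} then delivers $X\in W_n(J_3)$.

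The right inclusion is handled essentially the same way in the paper (the paper phrases it via $2$-positivity and Theorem~\ref{2-positive on J3}, you via contractivity and Lemma~\ref{lemma2 for 2-positivity}, but these are the same argument). The left inclusion, however, is genuinely different. The paper does \emph{not} build an explicit compression; instead it reduces to Theorem~\ref{CP for J3} by replacing $X$ with $X'=\lvert X_1\rvert+iX_2$ (which has $\Re(X')\geq 0$), invoking that theorem to get complete positivity of the map $J_3\mapsto X'$, and then using the operator inequality $I_n\otimes I+B_1\otimes X_1+B_2\otimes X_2\geq I_n\otimes I-\lvert B_1\rvert\otimes\lvert X_1\rvert+B_2\otimes X_2$ to transfer positivity back to $X$. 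That route leans on the full machinery of Section~\ref{dilation} (the structure theorem for $BB^*+B^*B=I$ feeds into Lemma~\ref{boundary case}, which feeds into Theorem~\ref{CP for J3}). Your explicit isometry bypasses all of this and is considerably more elementary and self-contained; it also makes the containment constructive. What the paper's route buys is that it situates the corollary as a consequence of the broader theory developed in Sections~\ref{dilation}--\ref{3-positive}, whereas your argument stands alone. Your closing remark about Example~\ref{counter example 1} is well taken: the compression route through $J_3\otimes I$ succeeds uniformly, even for those $X$ where the ``saturate to $BB^*+B^*B=I$'' dilation strategy would fail.
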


\begin{proof}
	Let $X\in\mathbb{M}_n$ be such that $XX^*+X^*X\leq I_n$. Take $X=X_1+iX_2$ where $X_1,X_2\in(\mathbb{M}_n)_{\text{s.a.}}$. Then $X_1^2+X_2^2\leq\frac{I_n}{2}$. Consider the unital self-adjoint preserving map $\varphi:\mathcal{OS}(J_3)\rightarrow\mathcal{OS}(X)$ such that $\varphi(J_3)=X$. Let $B_1,B_2\in(\mathbb{M}_n)_{\text{s.a.}}$ be such that $I_n\otimes I_3+B_1\otimes\Re{(J_3)}+B_2\otimes\Im{(J_3)}\geq 0$. This implies, by Lemma \ref{self-adjoint structure theorem}, that $B_1^2+B_2^2\leq 2I_n$. Observe that $$I_n\otimes I+B_1\otimes X_1+B_2\otimes X_2\geq I_n\otimes I-\vert B_1\vert\otimes \vert X_1\vert+B_2\otimes X_2.$$ Now we argue that $I_n\otimes I-\vert B_1\vert\otimes \vert X_1\vert+B_2\otimes X_2\geq 0$. Let $X'=\vert X_1\vert+iX_2$. Next we consider the unital self-adjoint preserving map $\psi:\mathcal{OS}(J_3)\rightarrow\mathcal{OS}(X')$ such that $\psi(J_3)=X'$. Check that $X'{X'}^*+{X'}^*{X'\leq I_n}$, and therefore $\Vert \alpha X'+\beta {X'}^*\Vert\leq\sqrt{\vert\alpha\vert^2+\vert\beta\vert^2}$ for all $\alpha,\beta\in\mathbb{C}$. This implies, by Theorem \ref{CP for J3}, that $\psi$ is completely positive. Since $(-\vert B_1\vert)^2+B_2^2\leq 2I_n$, by Lemma \ref{self-adjoint structure theorem}, we have $I_n\otimes I_3-\vert B_1\vert\otimes\Re{(J_3)}+B_2\otimes \Im{(J_3)}\geq 0$. Therefore, $I_n\otimes I-\vert B_1\vert\otimes \vert X_1\vert+B_2\otimes X_2\geq 0$ since $\psi$ is completely positive. So we have $I_n\otimes I+B_1\otimes X_1+B_2\otimes X_2\geq 0$. This implies, by Lemma \ref{n-positivity modified}, that $\varphi$ is $n$-positive for every $n$ and hence completely positive. Therefore, $X\in W_n(J_3)$ and we obtain $$\left\{X\in\mathbb{M}_n: XX^*+X^*X\leq I_n\right\}\subseteq W_n(J_3).$$

	Next, let $Y\in W_n(J_3)$. Then there exists a UCP map $\phi:\mathcal{OS}(J_3)\rightarrow\mathcal{OS}(Y)$ such that $\phi(J_3)=Y$. In particular, $\phi$ is $2$-positive and hence, by Theorem \ref{2-positive on J3}, we have $\Vert\alpha Y+\beta Y^*\Vert\leq\sqrt{\vert\alpha\vert^2+\vert\beta\vert^2}$ for all $\alpha,\beta\in\mathbb{C}$. Therefore, we obtain $$W_n(J_3)\subseteq\{X\in\mathbb{M}_n:\Vert\alpha X+\beta X^*\Vert\leq\sqrt{\vert\alpha\vert^2+\vert\beta\vert^2} \text{ for all }\alpha,\beta\in\mathbb{C}\}.$$
	This completes the proof.
\end{proof}

\begin{rmk}
	The extreme points of $W_n(J_3)$ that lie in the convex set $\mathcal{S}=\{X\in\mathbb{M}_n:XX^*+X^*X\leq I_n\}$ remain the extreme points of $\mathcal{S}$. Therefore, Theorem \ref{extreme point} yields the structure of the extreme points of $W_n(J_3)$ that lie in $\mathcal{S}$. 
\end{rmk}

Indeed, the first inclusion in Corollary \ref{Matricial range of J3 containment} is strict since $J_3\in W_n(J_3)$ but $J_3J_3^*+J_3^*J_3\nleq I_3$. However, it remains unclear whether the second inclusion in Corollary \ref{Matricial range of J3 containment} is also strict. At present, we do not have an example demonstrating this. In fact, all our MATLAB computations confirm that the second inclusion is an equality. Therefore, we conclude this section with the following conjecture for furthur study in the future.

\begin{conj}\label{conjecture 1}
	Let $X\in\mathcal{B}(\mathcal{H})$. Suppose $\varphi:\mathcal{OS}(J_3)\rightarrow\mathcal{OS}(X)$ be a unital self-adjoint preserving map with $\varphi(J_3)=X$. Then the following are equivalent.
	\begin{itemize}
		\item[(i)] $\varphi$ is completely positive.
		\item[(ii)] $\varphi$ is contractive.
		\item[(iii)] $\Vert \alpha X+\beta X^*\Vert\leq\sqrt{\vert\alpha\vert^2+\vert\beta\vert^2}$ for all $\alpha,\beta\in\mathbb{C}$.
	\end{itemize}
\end{conj}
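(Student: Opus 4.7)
The implications $(i) \Rightarrow (ii) \Rightarrow (iii)$ are immediate: the first is definitional, and the second follows from $\Vert\alpha J_3 + \beta J_3^*\Vert = \sqrt{\vert\alpha\vert^2 + \vert\beta\vert^2}$ (Lemma \ref{lemma2 for 2-positivity}). All the content lies in $(iii) \Rightarrow (i)$. By Lemmas \ref{positive element modified} and \ref{n-positivity modified}, the task reduces to proving, for every $n \in \mathbb{N}$ and every pair $B_1, B_2 \in (\mathbb{M}_n)_{\text{s.a.}}$ with $B_1^2 + B_2^2 \leq 2I_n$, that $I_n \otimes I + B_1 \otimes \Re(X) + B_2 \otimes \Im(X) \geq 0$. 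The boundary case $B_1^2 + B_2^2 = 2I_n$ is already settled by Lemma \ref{boundary case} via Theorem \ref{Structure of a dilation of B satisfying BB*+B*B=I}, so the real problem is to transfer positivity from the boundary to the interior.

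The most natural route---dilating the interior pair $(B_1, B_2)$ to a boundary pair on a larger space---is obstructed by the negative answer to Question \ref{dilation question}, demonstrated by Examples \ref{counter example 1} and \ref{counter example 2}. The plan is therefore to replace dilation by a $C^*$-convex decomposition: write
\[
B_i = \sum_j V_j^* \widetilde{B}_{i,j} V_j, \qquad \sum_j V_j^* V_j = I_n,
\]
with each pair $(\widetilde{B}_{1,j}, \widetilde{B}_{2,j})$ satisfying $\widetilde{B}_{1,j}^2 + \widetilde{B}_{2,j}^2 = 2I$. Each summand $I \otimes I + \widetilde{B}_{1,j} \otimes \Re(X) + \widetilde{B}_{2,j} \otimes \Im(X)$ is then positive by Lemma \ref{boundary case}, and since conjugation by $V_j \otimes I$ preserves positivity, the normalization $\sum_j V_j^* V_j = I_n$ yields $I_n \otimes I + B_1 \otimes \Re(X) + B_2 \otimes \Im(X) \geq 0$. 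The existence of such a decomposition should be attacked via the extreme point classification in Theorem \ref{extreme point}: a Krein--Milman style argument reduces the question to extreme points of $\{(B_1, B_2) : B_1^2 + B_2^2 \leq 2I_n\}$, and the class $\mathcal{E}_1$ is handled immediately by the boundary case.

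The crux is therefore the $\mathcal{E}_2$-type extreme points, which have the structured form $B_1^2 + B_2^2 = \text{diag}(2I_{n-k}, 2R_k)$ with $0 < R_k < I_k$ and a rank condition on the off-diagonal blocks. For such pairs I would use the block decomposition to apply the boundary argument to the saturated $(n-k)$-dimensional piece, and analyze the remaining $k$-dimensional piece by a Schur-complement plus induction argument, combined with Theorem \ref{CP for J3} whenever a local signedness condition on $\Re(X)$ or $\Im(X)$ can be engineered through the unitary rotation $X \mapsto e^{i\theta} X$, which preserves condition (iii). The main obstacle is precisely these $\mathcal{E}_2$ points: the dilation route is blocked, the global sign hypothesis of Theorem \ref{CP for J3} is unavailable for generic $X$, and a successful proof must produce either a new structural lemma for contractive pairs with strict inequality, or a dilation-free positivity argument exploiting the combined geometry of $\mathcal{OS}(J_3)$ and the tensor expression.
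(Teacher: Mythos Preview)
This statement is labeled \textbf{Conjecture} in the paper, and the paper explicitly does not prove it: the authors write that it ``remains unclear whether the second inclusion in Corollary~\ref{Matricial range of J3 containment} is also strict,'' that they ``do not have an example demonstrating this,'' and that they ``leave this for further investigation in the future.'' There is therefore no proof in the paper to compare your proposal against.

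Your analysis of the obstruction is accurate and in fact recapitulates the paper's own logic for why the problem is open. You correctly isolate $(iii)\Rightarrow(i)$ as the only nontrivial direction, correctly reduce via Lemmas~\ref{n-positivity modified} and~\ref{self-adjoint structure theorem} to the interior case $B_1^2+B_2^2<2I_n$, correctly note that Lemma~\ref{boundary case} handles the boundary, and correctly identify Examples~\ref{counter example 1}--\ref{counter example 2} as blocking the naive dilation route. Your identification of the $\mathcal{E}_2$-type extreme points of Theorem~\ref{extreme point} as the crux is exactly right: these are precisely the interior extreme points that cannot be dilated to the boundary, and they are the reason the paper stops at Theorem~\ref{CP for J3} (which needs a sign hypothesis on $\Re(X)$ or $\Im(X)$) rather than proving the full conjecture.

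That said, your proposal is a strategy sketch, not a proof, and you acknowledge as much in your final paragraph. One concrete warning about the $C^*$-convex decomposition route: the very existence of the $\mathcal{E}_2$ extreme points in the \emph{ordinary} convex sense already shows that the interior of $\{(B_1,B_2):B_1^2+B_2^2\le 2I_n\}$ is not the ordinary convex hull of the boundary, so a Krein--Milman reduction to $\mathcal{E}_1$ alone cannot work; you would need to show that the boundary \emph{$C^*$-convexly} generates the whole set, i.e.\ that every $\mathcal{E}_2$ pair is a simultaneous $C^*$-convex combination (same $V_j$'s for both coordinates) of boundary pairs. This joint $C^*$-convexity question is itself nontrivial and is not addressed anywhere in the paper. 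Absent that, the induction/Schur-complement idea you float for the $\mathcal{E}_2$ block structure is plausible in spirit but, as you say, requires a genuinely new structural lemma that neither you nor the paper currently supplies.
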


Farenick \cite{DF2} proved that every positive map on the operator system of $n\times n$ Toeplitz matrices is completely positive. Therefore, Conjecture \ref{conjecture 1} equivalently states that every unital contractive map on the operator system generated by $J_3$ admits a positive extension to $\mathbb{M}_3$. We are not able to answer this at this stage. We leave this for furthur investigation in the future.

\section{A unital contractive map on the operator system generated by a $4\times 4$ normal}\label{normal}

In this section, we investigate unital contractive maps on operator system generated by $4\times 4$ normal matrices. For the purpose of this study, the $4\times 4$ normal matrix can, without loss of generality, be taken in the specific form as described in the following proposition.

\begin{prop}\label{reduction lemma}
	Let $T\in\mathbb{M}_4$ be normal. Then every unital contractive map from $\mathcal{OS}(T)$ to $\mathcal{B(\mathcal{H})}$ is completely positive if and only if every unital contractive map from $\mathcal{OS}(N)$ to $\mathcal{B(\mathcal{H})}$ is completely positive for every $N=\text{diag}(\lambda,-1,i,-i)$ where $\Re{(\lambda)}\geq 0$. 
\end{prop}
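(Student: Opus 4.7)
The ``only if'' direction is immediate: the family $\{N = \text{diag}(\lambda, -1, i, -i) : \Re(\lambda) \geq 0\}$ sits inside the class of $4\times 4$ normal matrices, so the assertion for every such $T$ specializes to the assertion for every $N$.

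The heart of the ``if'' direction is an observation about changing generators. For a normal $T$ with $\mathcal{OS}(T)=\operatorname{span}\{I,T,T^*\}$, the new generator $T' := aI + bT + cT^*$ still spans the same operator system together with $I$ and ${T'}^*$ precisely when the change-of-basis matrix from $(I,T,T^*)$ to $(I,T',{T'}^*)$ is invertible, i.e.\ when $|b|^2 - |c|^2 \neq 0$. Since the properties ``unital contractive'' and ``completely positive'' for a map depend only on the underlying operator system, not on a choice of generator, the replacement $T\mapsto T'$ is a genuine symmetry of the problem. At the spectral level, this transforms each eigenvalue $\mu$ by the $\mathbb{R}$-affine map $\mu \mapsto a + b\mu + c\bar\mu$ on $\mathbb{C} \cong \mathbb{R}^2$; as $(a,b,c)$ varies, this is a $6$-real-parameter family, which acts transitively on ordered triples of non-collinear points in the plane. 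Combined with the invariances (P1)--(P3) (conjugation, affine shifts, and unitary permutation of diagonal entries), this is enough to force three of the four eigenvalues of $T$ to be $-1,\,i,\,-i$.

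The plan is then a case analysis on the diagonalization $T = \mathrm{diag}(\mu_1,\mu_2,\mu_3,\mu_4)$. \emph{Collinear case:} an affine shift makes $T$ self-adjoint, and Theorem~\ref{matricial range of self-adjoint operators} already gives positivity $\Rightarrow$ CP, so contractive $\Rightarrow$ CP without invoking the family. \emph{Repeated eigenvalues:} $T$ admits a nontrivial reducing subspace and the problem reduces to $3\times 3$ or $2\times 2$ normal cases handled by Theorem~\ref{Choi-Li} or by Arveson/Tso--Wu. \emph{Generic case (four distinct, non-collinear eigenvalues):} choose three non-collinear ones; since the source triple is non-collinear, the unique $\mathbb{R}$-affine map sending them (in some order) to $\{-1,i,-i\}$ is invertible, so $|b|\neq |c|$ automatically holds and $\mathcal{OS}(T)=\mathcal{OS}(T')$. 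The fourth eigenvalue lands at some $\lambda\in\mathbb{C}$. Cycling through the $4$ choices of which eigenvalue plays the role of the ``fourth'', the $6$ orderings of the remaining three, and the setwise stabilizer $\{\mathrm{id},\,\mu\mapsto\bar\mu\}$ of $\{-1,i,-i\}$ within the $\mathbb{R}$-affine group (the target is isoceles but not equilateral, so the only nontrivial symmetry is complex conjugation), produces up to $12$ candidate values of $\lambda$; arrange for one with $\Re(\lambda)\geq 0$ and apply the hypothesis.

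The step I expect to be the main obstacle is this final sign-normalization: conjugation $\lambda\mapsto\bar\lambda$ preserves $\Re(\lambda)$, so one must use the freedom in the choice of the ``fourth'' eigenvalue (and the ordering of the other three) to secure $\Re(\lambda)\geq 0$. Proving that at least one of the $12$ generically distinct candidates lies in the right half-plane is a geometric claim about $4$-point configurations in $\mathbb{R}^2$ that requires a careful argument rather than a symmetry alone. A secondary subtlety is the borderline case in which exactly three of the four eigenvalues are collinear: the $\mathbb{R}$-affine transformation to the non-collinear set $\{-1,i,-i\}$ is singular for that particular triple, so one must either choose a different triple of eigenvalues or route through the degenerate cases handled by the self-adjoint or $3\times 3$ results.
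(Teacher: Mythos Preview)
Your outline follows the paper's strategy: real-affine transformations of the spectrum to send three eigenvalues to $\{-1,i,-i\}$, with degenerate cases handled by prior results. Two differences in execution are worth noting.

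First, the paper's case split is according to whether $W(T)$ (the convex hull of the eigenvalues) is a triangle, possibly degenerate, or a genuine quadrilateral, rather than your collinear/repeated/generic trichotomy. The triangle case---which subsumes your collinear and repeated-eigenvalue cases, as well as the configuration where one eigenvalue lies in the convex hull of the other three---reduces immediately to the $3\times 3$ normal result (Theorem~\ref{Choi-Li}): if $S'$ is the $3\times 3$ diagonal on the three extreme eigenvalues, then $W(S)=W(S')$ makes the tautological maps $\mathcal{OS}(S)\leftrightarrow\mathcal{OS}(S')$ completely positive in both directions, so any contractive map on $\mathcal{OS}(S)$ factors through $\mathcal{OS}(S')$. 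This disposes cleanly of the borderline configurations (three eigenvalues collinear, or one interior to the triangle of the others) that your plan would otherwise have to push through the sign-normalization step.

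Second, for the sign-normalization you correctly flagged as the main obstacle, the paper does not argue abstractly that one of your twelve candidates lies in the right half-plane. Instead, after the first normalization yields $N=\text{diag}(\lambda,i,-i,-1)$ with $\Re(\lambda)<0$ and $W(N)$ a genuine quadrilateral, it splits on the sign of $\Im(\lambda)$, applies an explicit rotation-and-shift (for instance $\mu\mapsto 1-i\mu$ when $\Im(\lambda)\ge 0$), relabels the eigenvalues, and runs the normalization formula once more; a direct computation using the quadrilateral constraint then shows the new fourth eigenvalue lands in the closed right half-plane. This is effectively selecting one of your candidates by formula rather than by existence, and fills exactly the gap you identified.
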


\begin{proof}
	$``\Leftarrow"$.  Let $X\in\mathcal{B(\mathcal{H})}$ and $\varphi:\mathcal{OS}(T)\rightarrow\mathcal{OS}(X)$ be a unital contractive map with $\varphi(T)=X$. Without loss of generality, we take $T=\text{diag}(z_1,z_2,z_3,z_4)$ where $z_j\in\mathbb{C}$ for all $j$. Let $S=T-z_4I_4=\text{diag}(w_1,w_2,w_3,0)$ where $w_j=z_j-z_4$ for all $j$ and $Y=X-z_4I$. Clearly, the unital selfadjoint preserving map $\psi:\mathcal{OS}(S)\rightarrow\mathcal{OS}(Y)$ with $\psi(S)=Y$ is contractive. It is now equivalent to prove $\psi$ is completely positive. Let us now consider the following cases.

	\underline{Case I}: Let $W(S)$ be a traingle (could be degenerate). Without loss of generality, let $W(S)$ be a traingle with vertices $0,w_1,w_2$. Suppose $S'=\text{diag}(w_1,w_2,0)$. Since $W(S)=W(S')$, by Lemma \ref{equivalent criterion for positivity}, the unital selfadjoint preserving maps $\psi':\mathcal{OS}(S)\rightarrow\mathcal{OS}{(S')}$ with $\psi'(S)=S'$ and $\psi'':\mathcal{OS}(S')\rightarrow\mathcal{OS}(S)$ with $\psi''(S')=S$ are both completely positive. This implies that $\psi\circ\psi'':\mathcal{OS}(S')\rightarrow\mathcal{OS}(Y)$ with $\psi\circ\psi''(S')=Y$ is positive and hence, by Theorem \ref{Choi-Li}, completely positive. Therefore $\psi=(\psi\circ\psi'')\circ\psi'$ is completely positive.

	\underline{Case II}: Suppose $W(S)$ is not a traingle. Observe $\mathcal{OS}(S)=\{\text{diag}(a+bw_1+c\overline{w_1},a+bw_2+c\overline{w_2},a+bw_3+c\overline{w_3},a): a,b,c\in\mathbb{C}\}$. Take $a=-1$. We claim that there exist $b,c\in\mathbb{C}$ such that
	\begin{align}\label{1}
		bw_2+c\overline{w_2}=1+i,\quad bw_3+c\overline{w_3}=1-i.
	\end{align}
	Indeed, this is true as the following $b,c$ satisfy Eqns \ref{1}:
	\begin{align}\label{2}
		b=\frac{(1+i)\overline{w_3}-(1-i)\overline{w_2}}{w_2\overline{w_3}-\overline{w_2}w_3},\quad c=\frac{(1+i)w_3-(1-i)w_2}{\overline{w_2}w_3-w_2\overline{w_3}}.
	\end{align}
	Note that $w_2\overline{w_3}-\overline{w_2}w_3\neq 0$ because if so then $\frac{w_2}{w_3}=\frac{\overline{w_2}}{\overline{w_3}}=t(\in\mathbb{R})$. That is, $w_2=tw_3$. This implies that $0,w_2,w_3$ are colinear and hence $W(S)$ is a triangle which is not possible. Let $N=aI_4+bS+cS^*=\text{diag}(\lambda,i,-i,-1)$ where $\lambda=a+bw_1+c\overline{w_1}$ with $\Re{(\lambda)}=-1+2\Re{(\frac{w_1(\overline{w_3}-\overline{w_2})}{w_2\overline{w_3}-\overline{w_2}w_3})}$ and $Z=aI+bY+cY^*$. Clearly, the unital selfadjoint preserving map $\phi:\mathcal{OS}(N)\rightarrow\mathcal{OS}(Z)$ with $\phi(N)=Z$ is contractive. It is now equivalent to prove that $\phi$ is completely positive. If $\Re{(\lambda)}\geq 0$ then the conclusion follows by the hypothesis. Let $\Re{(\lambda)}<0$. If $W(N)$ is a triangle then also the conclusion follows by Case I. So assume that $W(N)$ is not a triangle. Now we complete the proof considering the following subcases.

	\underline{Subcase I}: Let $\Im{(\lambda)}\geq 0$. Then $I_4-iN=\text{diag}(\mu, 2, 0, 1+i)$ where $\mu=\mu_1+i\mu_2$ such that $\mu_1+\mu_2>2$. Take $M=\text{diag}(\mu, 1+i, 2, 0)$. Then $-I_4+bM+cM^*=\text{diag}(u, i,-i,-1)$ where $u\in\mathbb{C}$ and $b,c$ are the same as given in Eqns \ref{2} (taking $w_1=\mu, w_2=1+i, w_3=2$ in Eqns \ref{2}). Check that $\Re{(u)}\geq 0$ and hence the conclusion follows by the hypothesis.
	
	\underline{Subcase II}: Let $\Im{(\lambda)}\leq 0$. Then $-iI_4-N=\text{diag}(\nu, -2i, 0, 1-i)$ where $\nu=\nu_1+i\nu_2$ such that $\nu_1-\nu_2>2$. Take $R=\text{diag}(\nu, 1-i, -2i, 0)$. Then $-I_4+bR+cR^*=\text{diag}(v, i,-i,-1)$ where $v\in\mathbb{C}$ and $b,c$ are the same as given in Eqns \ref{2} (taking $w_1=\nu, w_2=1-i, w_3=-2i$ in Eqns \ref{2}). Check that $\Re{(v)}\geq 0$ and hence the conclusion follows by the hypothesis.

	$``\Rightarrow"$. This is straightforward.
\end{proof}

The following theorem states that the set of all unital contractive maps coincides with the set of all unital completely positive maps on the operator system generated by $T=\text{diag}(1,-1,i,-i)$.

\begin{thm}\label{main theorem for normal}
	Let $T=\text{diag}(1,-1,i,-i)$ and $X\in\mathcal{B}(\mathcal{H})$. Suppose $\varphi:\mathcal{OS}(T)\rightarrow\mathcal{OS}(X)$ is a unital self-adjoint preserving map with $\varphi(T)=X$. Then the following are equivalent.
	\begin{itemize}
		\item[(i)] $\varphi$ is completely positive.
		\item[(ii)] $\varphi$ is contractive.
		\item[(iii)] $\Vert \alpha \Re{(X)}+\beta \Im{(X)}\Vert\leq\max\{\vert\alpha\vert,\vert\beta\vert\}$ for all $\alpha,\beta\in\mathbb{C}$.
		\item[(iv)] $\Re{(X)},\Im{(X)}$ have self-adjoint contractive dilations $A_1,A_2\in\mathcal{B}(\mathcal{H}\oplus\mathcal{H})$ with $A_1A_2=0$.
	\end{itemize}
\end{thm}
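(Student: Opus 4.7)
The strategy is to close the cycle $(i)\Rightarrow(ii)\Rightarrow(iii)\Rightarrow(i)$ and then to establish $(i)\Leftrightarrow(iv)$ separately. The implications $(i)\Rightarrow(ii)$ and $(ii)\Rightarrow(iii)$ are immediate: the former is the standard chain (CP implies $2$-positive implies contractive), and for the latter a direct computation gives $\alpha\Re(T)+\beta\Im(T)=\mathrm{diag}(\alpha,-\alpha,\beta,-\beta)$, whose operator norm is $\max(|\alpha|,|\beta|)$, so contractivity of $\varphi$ delivers condition (iii).

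The heart of the argument is $(iii)\Rightarrow(i)$. Using the two-variable analog of Lemma \ref{n-positivity modified} together with the explicit form of $\Re(T)$ and $\Im(T)$ (which makes the positivity condition on $B_1,B_2$ simply $\|B_1\|,\|B_2\|\le 1$), complete positivity at level $n$ reduces to the single inequality
\[
\|B_1\otimes R+B_2\otimes S\|\le 1\qquad\text{for all self-adjoint contractions }B_1,B_2\in\mathbb{M}_n,
\]
where $R=\Re(X)$, $S=\Im(X)$. The plan is to dilate each $B_j$ by the Halmos unitary $U_j=B_j+i\sqrt{I-B_j^2}$, so that $B_j=(U_j+U_j^*)/2$. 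Because $R,S$ are self-adjoint we obtain
\[
B_1\otimes R+B_2\otimes S=\Re\bigl(U_1\otimes R+U_2\otimes S\bigr),
\]
and hence $\|B_1\otimes R+B_2\otimes S\|\le \|U_1\otimes R+U_2\otimes S\|$. Multiplying by the unitary $U_2^*\otimes I$ on the left converts the right-hand side to $\|U\otimes R+I\otimes S\|$ with $U=U_2^*U_1\in\mathbb{M}_n$ unitary. Using the spectral decomposition $U=\sum_k e^{i\theta_k}P_k$ with mutually orthogonal projections $P_k$, one obtains the block-diagonal identity
\[
U\otimes R+I\otimes S=\sum_k P_k\otimes (e^{i\theta_k}R+S),
\]
whose norm equals $\max_k\|e^{i\theta_k}R+S\|$. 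Condition (iii) applied with $\alpha=e^{i\theta_k}$ and $\beta=1$ bounds each summand by $\max(1,1)=1$, which finishes the proof.

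For $(i)\Leftrightarrow(iv)$: the implication $(i)\Rightarrow(iv)$ follows from Lemma \ref{equivalent criterion for CP}, since if $X$ is a compression of $T\otimes I$, then $\Re(X)$ and $\Im(X)$ are compressions of the self-adjoint contractions $\Re(T)\otimes I$ and $\Im(T)\otimes I$, whose product vanishes. For $(iv)\Rightarrow(i)$, the relation $A_1A_2=0$ together with self-adjointness forces $A_1$ and $A_2$ to commute, with joint spectrum on the cross $[-1,1]\cup i[-1,1]$; applying Halmos-type dilations (in the spirit of Lemma \ref{self-adjoint dilation}) separately to the non-trivial spectral pieces of each $A_j$ yields a further dilation $(\tilde A_1,\tilde A_2)$ whose joint spectrum lies in $\{(\pm 1,0),(0,\pm 1)\}$. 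The normal operator $\tilde A_1+i\tilde A_2$ then has spectrum inside $\{1,-1,i,-i\}$ and, after padding multiplicities to an infinite-dimensional Hilbert space, becomes unitarily equivalent to $T\otimes I$; thus $X$ is a compression of $T\otimes I$, and $(i)$ follows via Lemma \ref{equivalent criterion for CP}.

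The main obstacle is the key step in $(iii)\Rightarrow(i)$: controlling a matrix-level norm using only the scalar-level bound in (iii) crucially exploits the \emph{complex} case of (iii), namely $\|e^{i\theta}R+S\|\le 1$ for every $\theta\in[0,2\pi)$. The real restriction of (iii) alone amounts merely to positivity of $\varphi$, which for this operator system is strictly weaker than complete positivity, as Example \ref{positive but not CP for 4-by-4 normal} shows; the Halmos-dilation plus unitary-factoring trick is what converts the 1-dimensional complex condition into complete positivity at every level.
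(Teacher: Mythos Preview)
Your cycle $(i)\Rightarrow(ii)\Rightarrow(iii)\Rightarrow(i)$ is correct and is essentially the paper's argument with a cosmetic variation: the paper dilates each $B_j$ to a $2n\times 2n$ \emph{self-adjoint} unitary $U_j$ and bounds $\|U_1\otimes X_1+U_2\otimes X_2\|$ by squaring and integrating against the spectral measure of the unitary $U_1U_2$; you dilate to $n\times n$ complex unitaries, use $\|\Re(\cdot)\|\le\|\cdot\|$, factor out $U_2^*\otimes I$, and diagonalize $U_2^*U_1$. Both routes collapse the problem to the scalar bound $\|e^{i\theta}R+S\|\le 1$, and your remark that the \emph{complex} case of (iii) is what powers this step is exactly right.

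For $(iv)\Rightarrow(i)$ your approach is a legitimate alternative to the paper's. The paper recovers from the block decomposition $A_1=\mathrm{diag}(B_1,0)$, $A_2=\mathrm{diag}(0,B_2)$ the structural form $X_j=\sqrt{(I\pm R)/2}\,C_j\,\sqrt{(I\pm R)/2}$ (Lemma~\ref{lemma2 for normal}, which rests on Ando's Theorem~\ref{Ando's result}) and then quotes the converse of that lemma. Your route---dilate $B_1,B_2$ further to self-adjoint unitaries so that $\tilde A_1+i\tilde A_2$ is normal with spectrum in $\{1,-1,i,-i\}$, then invoke Arveson's description of $W_n(T)$ for normal $T$---avoids Ando's theorem entirely and is arguably more transparent.

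There is, however, a genuine gap in your $(i)\Rightarrow(iv)$. Lemma~\ref{equivalent criterion for CP} gives $X$ as a compression of $T\otimes I_{\mathcal K}$ on $\mathbb C^4\otimes\mathcal K$, so you obtain commuting self-adjoint contractive dilations $\Re(T)\otimes I$, $\Im(T)\otimes I$ with product zero on a space of the form $\mathbb C^4\otimes\mathcal K$, not on $\mathcal H\oplus\mathcal H$ as the statement demands. When $\dim\mathcal H=m<\infty$ this matters: the minimal Stinespring space has dimension $4m$, and it is not clear how to compress your $A_j'$ down to $\mathbb C^{2m}$ while preserving both the dilation property and $A_1A_2=0$. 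The paper obtains the sharp size $\mathcal H\oplus\mathcal H$ precisely through the Ando-based structure of Lemma~\ref{lemma2 for normal}: from $X_1=\sqrt{(I+R)/2}\,C_1\,\sqrt{(I+R)/2}$ and $X_2=\sqrt{(I-R)/2}\,C_2\,\sqrt{(I-R)/2}$ one takes the explicit isometry $V=\bigl(\sqrt{(I+R)/2},\,\sqrt{(I-R)/2}\bigr)^{t}:\mathcal H\to\mathcal H\oplus\mathcal H$ and $A_1=\mathrm{diag}(C_1,0)$, $A_2=\mathrm{diag}(0,C_2)$. Your argument proves the weaker statement ``dilations with $A_1A_2=0$ exist on \emph{some} larger space'', but to reach the stated (iv) you still need the structural lemma (or an independent dimension-reduction argument) that you were trying to bypass.
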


\begin{proof}[\textit{Proof $[(i)\Leftrightarrow (ii)\Leftrightarrow (iii)]$}]
	$(i)\Rightarrow (ii)$. This is immediate.\\
	$(ii)\Rightarrow (iii)$. Let $T_1=\Re{(T)}=\text{diag}(1,-1,0,0), T_2=\Im{(T)}=\text{diag}(0,0,1,-1), X_1=\Re{(X)}$ and $X_2=\Im{(X)}$. Let $\alpha,\beta\in\mathbb{C}$. Since $\varphi$ is contractive, we have $\Vert\alpha X_1+\beta X_2\leq\Vert\alpha T_1+\beta T_2\Vert=\max\{\vert\alpha\vert,\vert\beta\vert\}$.\\
	$(iii)\Rightarrow (i)$. Let $B_1,B_2\in\mathbb{M}_n$ be self-adjoint such that $I_n\otimes I_4+B_1\otimes T_1+B_2\otimes T_2\geq 0$. This implies that $-I_n\leq B_j\leq I_n$ for $j=1,2$. Consider
	\begin{align*}
		U_j=\begin{pmatrix}
			B_j & \sqrt{I_n-B_j^2} \\
			\sqrt{I_n-B_j^2} & -B_j \\
		\end{pmatrix}\in\mathbb{M}_{2n}
	\end{align*}
	Indeed, $U_1,U_2$ are self-adjoint unitary dilations of $B_1,B_2$. Note that $(U_1U_2)^*=U_2U_1$. Let $E$ be the spectral measure corresponding to the unitary $U_1U_2$. Now observe that
	\begin{align*}
		(U_1\otimes X_1+U_2\otimes X_2)^2
		&=I_{2n}\otimes X_1^2+I_{2n}\otimes X_2^2+U_1U_2\otimes X_1X_2+U_2U_1\otimes X_2X_1\\
		&=\int\limits_{\vert\lambda\vert=1}\left(X_1^2+X_2^2+\lambda X_1X_2+\overline{\lambda}X_2X_2\right)dE(\lambda)\\
		&=\int\limits_{\vert\lambda\vert=1}(X_1+\lambda X_2)^*(X_1+\lambda X_2)dE(\lambda).
	\end{align*}
This implies that $\Vert U_1\otimes X_1+U_2\otimes X_2\Vert^2\leq\int_{\vert\lambda\vert=1}\Vert X_1+\lambda X_2\Vert^2 dE(\lambda)\leq 1$ since $\Vert X_1+\lambda X_2\Vert\leq 1$ whenever $\vert\lambda\vert=1$. Therefore, we have $I_{2n}\otimes I+U_1\otimes X_1+U_2\otimes X_2\geq 0$ and hence $I_n\otimes I+B_1\otimes X_1+B_2\otimes X_2\geq 0$. So, by Lemma \ref{n-positivity modified}, $\varphi$ is $n$-positive for every $n$, and hence completely positive.
\end{proof}

To prove Theorem \ref{main theorem for normal} $[(i)\Leftrightarrow (iv)]$, we need a few lemmas. We begin by recalling the following structure theorem due to Ando \cite{TA}

\begin{thm}[Ando \cite{TA}]\label{Ando's result}
	Let $T\in\mathcal{B}(\mathcal{H})$. Then $w(T)\leq 1$ if and only if $T=(I+Y)^{\frac{1}{2}}Z(I-Y)^{\frac{1}{2}}$ where $Y,Z\in\mathcal{B}(\mathcal{H})$ are contractions and $Y$ is self-adjoint.
\end{thm}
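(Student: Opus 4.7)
The forward implication reduces to a short Cauchy--Schwarz plus AM--GM calculation. If $T=(I+Y)^{1/2}Z(I-Y)^{1/2}$ with $Y=Y^*$ and $\|Y\|,\|Z\|\le 1$, then for any unit vector $x\in\mathcal{H}$,
\[
|\langle Tx,x\rangle|=|\langle Z(I-Y)^{1/2}x,(I+Y)^{1/2}x\rangle|\le \|(I-Y)^{1/2}x\|\,\|(I+Y)^{1/2}x\|\le \tfrac12\bigl(\langle(I-Y)x,x\rangle+\langle(I+Y)x,x\rangle\bigr)=1,
\]
so $w(T)\le 1$. The substance of the argument is the reverse direction, so the remainder of my plan focuses on it.

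My first move would be to translate the desired factorization into an operator-matrix positivity. By the Schur complement criterion, the existence of a self-adjoint contraction $Y$ and a contraction $Z$ with $T=(I+Y)^{1/2}Z(I-Y)^{1/2}$ is equivalent (modulo boundary issues where $I\pm Y$ fail to be invertible) to the existence of a self-adjoint contraction $Y$ satisfying
\[
M(Y):=\begin{pmatrix}I+Y & T\\ T^* & I-Y\end{pmatrix}\ge 0.
\]
Thus the task becomes: given $w(T)\le 1$, exhibit such a $Y$.

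To produce $Y$, I would invoke Berger's dilation theorem, which characterizes $w(T)\le 1$ by the existence of a unitary $U\in\mathcal{B}(\mathcal{K})$ on some $\mathcal{K}\supseteq\mathcal{H}$ with $T^{n}=2P_{\mathcal{H}}U^{n}|_{\mathcal{H}}$ for every $n\ge 1$. Writing $U=e^{iA}$ via the spectral theorem and using the scalar identity $e^{i\theta}=(1+\cos\theta)^{1/2}\cdot \mathrm{phase}(\theta)\cdot (1-\cos\theta)^{1/2}$ in the spectral calculus, a natural candidate is $Y=P_{\mathcal{H}}\cos(A)|_{\mathcal{H}}$, with $Z$ recovered from the residual phase of $U$ after conjugation by $(I\pm Y)^{1/2}$ on the appropriate enlargement. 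The positivity $M(Y)\ge 0$ should then drop out of a direct expansion of $\langle M(Y)(x\oplus y),x\oplus y\rangle$ using the dilation identity and unitarity of $U$.

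The step I expect to be the main obstacle is the boundary case, when $I\pm Y$ has a non-trivial kernel: then $(I\pm Y)^{1/2}$ is not invertible and the square-root factorisation has to be interpreted on the orthogonal complement of the kernel. I would circumvent this by an $\varepsilon$-perturbation---carry out the construction for $(1-\varepsilon)T$, whose numerical radius is strictly less than $1$ so the corresponding $I\pm Y_{\varepsilon}$ are strictly positive---and then pass to a weak-operator limit point of the triples $(Y_{\varepsilon},Z_{\varepsilon})$ along $\varepsilon\downarrow 0$, using weak-operator compactness of the closed unit ball in $\mathcal{B}(\mathcal{H})$ and continuity of multiplication on bounded sets to verify that the identity $T=(I+Y)^{1/2}Z(I-Y)^{1/2}$ and the contraction bounds survive in the limit.
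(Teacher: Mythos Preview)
First, a framing remark: the paper does not prove this theorem; it is quoted from Ando \cite{TA} and used as a black box in Lemma~\ref{lemma1 for normal}. So there is no paper-side argument to compare against. Your forward direction is correct, and your reformulation of the reverse direction as the existence of a self-adjoint contraction $Y$ with $M(Y)\ge 0$ is exactly the form in which the paper exploits Ando's result.

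The genuine gap is your construction of $Y$. The scalar identity you invoke is false: $(1+\cos\theta)^{1/2}(1-\cos\theta)^{1/2}=|\sin\theta|\ne 1=|e^{i\theta}|$, so no unimodular ``phase'' can repair it. More decisively, the candidate $Y=P_{\mathcal H}\Re(U)|_{\mathcal H}$ coming from the Berger dilation does not give $M(Y)\ge 0$. Take $T=2J_2$, so $w(T)=1$; its Berger $2$-dilation is the bilateral shift on $\ell^2(\mathbb Z)$ with $\mathcal H=\operatorname{span}\{e_0,e_1\}$, and one finds $Y=\tfrac12\bigl(\begin{smallmatrix}0&1\\1&0\end{smallmatrix}\bigr)$. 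The resulting $M(Y)$ has the principal $2\times 2$ submatrix $\bigl(\begin{smallmatrix}1&2\\2&1\end{smallmatrix}\bigr)$ in rows and columns $1$ and $4$, with determinant $-3$. In fact the \emph{only} self-adjoint contraction that works for $T=2J_2$ is $Y=\operatorname{diag}(1,-1)$; since every function of the bilateral shift has constant diagonal when compressed to $\mathcal H$, no spectral-calculus choice $f(U)$ can ever produce it. The Berger dilation does not carry the information needed to locate $Y$ in the way you propose, and the $\varepsilon$-perturbation step cannot rescue a construction that already fails in the strict case.

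A route that does work and is non-circular within this paper: $w(T)\le 1$ means $W(T/2)\subseteq W(J_2)$, so by Lemma~\ref{equivalent criterion for positivity} the unital map $\mathcal{OS}(J_2)\to\mathcal B(\mathcal H)$ sending $J_2\mapsto T/2$ is positive, hence completely positive by Theorem~\ref{CP for J2} (whose proof here uses only Halmos's unitary dilation, not Ando). Extend it via Arveson's extension theorem to a UCP map $\Phi:\mathbb M_2\to\mathcal B(\mathcal H)$ and set $Y=\Phi(E_{11})-\Phi(E_{22})$; positivity of the Choi matrix of $\Phi$ is exactly $M(Y)\ge 0$.
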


\begin{lem}\label{lemma1 for normal}
	Let $T=\text{diag}(1,-1,i,-i)$ and $X\in\mathcal{B}(\mathcal{H})$. Suppose $\varphi:\mathcal{OS}(T)\rightarrow\mathcal{OS}(X)$ is a unital self-adjoint preserving map with $\varphi(T)=X$. Then the following are equivalent.
	\begin{itemize}
		\item[(i)] $\varphi$ is completely positive.
		\item[(ii)] $w(\widetilde{X})\leq 1$ where $\widetilde{X}=\begin{pmatrix}
			0 & 2\Re{(X)} \\
			2\Im{(X)} & 0\\
		\end{pmatrix}$.
		\item[(iii)] There exists a self-adjoint contraction $Y$ such that
		\begin{align*}
			\begin{pmatrix}
				I+Y & \widetilde{X} \\
				\widetilde{X}^* & I-Y\\
			\end{pmatrix}\geq 0.
		\end{align*}
	\end{itemize}
\end{lem}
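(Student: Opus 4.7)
The plan is to lift the already-established equivalence $(i)\Leftrightarrow(ii)\Leftrightarrow(iii)$ of Theorem~\ref{main theorem for normal}: complete positivity of $\varphi$ is equivalent to the norm bound $\|\alpha X_1+\beta X_2\|\leq\max\{|\alpha|,|\beta|\}$ for all $\alpha,\beta\in\mathbb{C}$, where $X_1=\Re(X)$ and $X_2=\Im(X)$. First I would reduce this to a single-parameter condition: by homogeneity one may assume $\max\{|\alpha|,|\beta|\}=1$, and then the maximum modulus principle applied to the subharmonic function $\nu\mapsto\|X_1+\nu X_2\|$ on the closed unit disc shows it suffices to check $\|X_1+e^{i\phi}X_2\|\leq 1$ for every $\phi\in[0,2\pi)$.

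For $(i)\Leftrightarrow(ii)$, I would compute $w(\widetilde{X})$ via the standard identity $w(\widetilde{X})=\sup_{\theta}\|\Re(e^{i\theta}\widetilde{X})\|$. Using self-adjointness of $X_1$ and $X_2$,
\begin{equation*}
\Re(e^{i\theta}\widetilde{X})=\begin{pmatrix}0 & e^{i\theta}X_1+e^{-i\theta}X_2\\ e^{-i\theta}X_1+e^{i\theta}X_2 & 0\end{pmatrix},
\end{equation*}
a self-adjoint block matrix whose two off-diagonal entries are Hilbert adjoints of each other; hence its norm equals $\|e^{i\theta}X_1+e^{-i\theta}X_2\|=\|X_1+e^{-2i\theta}X_2\|$. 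As $\theta$ ranges over $[0,2\pi)$ the exponent $-2\theta$ also sweeps the circle, so $w(\widetilde{X})\leq 1$ becomes precisely the boundary condition from the previous paragraph, yielding $(i)\Leftrightarrow(ii)$.

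For $(ii)\Leftrightarrow(iii)$ I would invoke Theorem~\ref{Ando's result}. Given $w(\widetilde{X})\leq 1$, Ando produces $\widetilde{X}=(I+Y)^{1/2}Z(I-Y)^{1/2}$ with $Y$ a self-adjoint contraction and $Z$ a contraction; then the factorization
\begin{equation*}
\begin{pmatrix}I+Y & \widetilde{X}\\ \widetilde{X}^* & I-Y\end{pmatrix}=\begin{pmatrix}(I+Y)^{1/2} & 0\\ 0 & (I-Y)^{1/2}\end{pmatrix}\begin{pmatrix}I & Z\\ Z^* & I\end{pmatrix}\begin{pmatrix}(I+Y)^{1/2} & 0\\ 0 & (I-Y)^{1/2}\end{pmatrix}
\end{equation*}
exhibits the block matrix as positive, since $\|Z\|\leq 1$. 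Conversely, testing the positive block matrix on vectors of the form $(e^{i\theta}v,v)^{t}$ and optimizing over $\theta$ gives $2+2\Re(e^{-i\theta}\langle\widetilde{X}v,v\rangle)\geq 0$, so $|\langle\widetilde{X}v,v\rangle|\leq 1$ for every unit $v$, i.e., $w(\widetilde{X})\leq 1$.

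The step I expect to demand the most care is the reduction in the first paragraph from the $\max\{|\alpha|,|\beta|\}$ form appearing in Theorem~\ref{main theorem for normal} to the boundary form $\|X_1+e^{i\phi}X_2\|\leq 1$. Pure homogeneity does not suffice, and a subharmonicity (or Schwarz-type) argument must be applied to the operator-valued affine map; one also has to keep track of the bookkeeping between the parameter $\theta$ in the numerical radius and the parameter $\phi$ in the norm inequality so that the two conditions are seen to be literally the same set.
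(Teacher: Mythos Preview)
Your proof is correct but takes a different route from the paper on two of the three implications. For $(i)\Leftrightarrow(ii)$ the paper simply cites Theorem~2.5 of \cite{CL2}, whereas you bootstrap from the norm characterization $\|\alpha X_1+\beta X_2\|\leq\max\{|\alpha|,|\beta|\}$ already established in Theorem~\ref{main theorem for normal}$(iii)$, reduce it to the boundary condition $\|X_1+e^{i\phi}X_2\|\leq 1$ via subharmonicity of $\nu\mapsto\|X_1+\nu X_2\|$, and then identify this with $w(\widetilde X)\leq 1$ by the explicit computation of $\Re(e^{i\theta}\widetilde X)$; this makes the argument self-contained relative to the present paper at the cost of the maximum-modulus step you rightly flagged. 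For $(ii)\Rightarrow(iii)$ both proofs use Ando's factorization in essentially the same way. For $(iii)\Rightarrow(ii)$ the paper reverses the Ando decomposition by setting $Z=(I+Y)^{-1/2}\widetilde X(I-Y)^{-1/2}$ and reading off $\|Z\|\leq 1$, which tacitly assumes $I\pm Y$ are invertible; your direct argument testing the positive block matrix on vectors $(e^{i\theta}v,v)^t$ is more elementary and sidesteps that invertibility issue entirely.
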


\begin{proof}
	$(i)\Leftrightarrow (ii)$. This follows from Theorem 2.5 \cite{CL2}.\\
	$(ii)\Rightarrow (iii)$. Indeed, by Theorem \ref{Ando's result}, $\widetilde{X}=(I+Y)^{\frac{1}{2}}Z(I-Y)^{\frac{1}{2}}$ where $Y,Z$ are contractions and $Y$ is self-adjoint. Now observe that
	\begin{align*}
		\Vert Z\Vert\leq 1
		&\Leftrightarrow \begin{pmatrix}
			I & Z \\
			Z^* & I\\
		\end{pmatrix}\geq 0\\
		&\Leftrightarrow\begin{pmatrix}
			I & (I+Y)^{-\frac{1}{2}}\widetilde{X}(I-Y)^{-\frac{1}{2}} \\
			(I-Y)^{-\frac{1}{2}}\widetilde{X}^*(I+Y)^{-\frac{1}{2}}  & I\\
		\end{pmatrix}\geq 0\\
		&\Leftrightarrow\begin{pmatrix}
			I+Y & \widetilde{X} \\
			\widetilde{X}^* & I-Y\\
		\end{pmatrix}\geq 0.
	\end{align*}
	$(iii)\Rightarrow (ii)$. Let $Y$ be a self-adjoint contraction such that
	\begin{align*}
		\begin{pmatrix}
			I+Y & \widetilde{X} \\
			\widetilde{X}^* & I-Y\\
		\end{pmatrix}\geq 0.
	\end{align*}
	Take $Z=(I+Y)^{-\frac{1}{2}}\widetilde{X}(I-Y)^{-\frac{1}{2}}$. Indeed, $Z$ is a contraction and $\widetilde{X}=(I+Y)^{\frac{1}{2}}Z(I-Y)^{\frac{1}{2}}$. Therefore, by Theorem \ref{Ando's result}, $w(\widetilde{X})\leq 1$.
\end{proof}

\begin{lem}\label{lemma2 for normal}
	Let $T=\text{diag}(1,-1,i,-i)$ and $X\in\mathcal{B}(\mathcal{H})$. Suppose $\varphi:\mathcal{OS}(T)\rightarrow\mathcal{OS}(X)$ is a unital self-adjoint preserving map with $\varphi(T)=X$. Then $\varphi$ is completely positive if and only if 
	\begin{align*}
		\Re{(X)}=\sqrt{\frac{I+R}{2}}C_1\sqrt{\frac{I+R}{2}}, \quad \Im{(X)}=\sqrt{\frac{I-R}{2}}C_2\sqrt{\frac{I-R}{2}}
	\end{align*}
	where $C_1,C_2, R\in\mathcal{B}(\mathcal{H})$ are self-adjoint contractions.
\end{lem}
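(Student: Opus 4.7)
By Lemma \ref{lemma1 for normal}(iii), the map $\varphi$ is completely positive if and only if there exists a self-adjoint contraction $Y \in \mathcal{B}(\mathcal{H}\oplus\mathcal{H})$ such that
\[
M(Y) := \begin{pmatrix} I+Y & \widetilde{X} \\ \widetilde{X}^* & I-Y \end{pmatrix} \geq 0.
\]
The plan is to exploit the off-diagonal structure of $\widetilde{X}$ to show that such a $Y$ can always be chosen of the block-diagonal form $\text{diag}(R, -R)$, and then to decouple the resulting positivity into two independent conditions, one on $\Re(X)$ and one on $\Im(X)$.

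For the forward direction, suppose such a $Y$ exists. Two symmetries of the problem come from the unitaries $U = \text{diag}(I, -I)$ and $V = \begin{pmatrix} 0 & I \\ I & 0 \end{pmatrix}$ on $\mathcal{H}\oplus\mathcal{H}$. A direct computation yields $U\widetilde{X}U = -\widetilde{X}$ and $V\widetilde{X}V = \widetilde{X}^*$. Combined with suitable conjugations on the outer block of $M(Y)$ (in the first case a sign flip, in the second a swap of the two outer blocks), these imply that $M(UYU) \geq 0$ and $M(-VYV) \geq 0$ whenever $M(Y) \geq 0$. Since the admissible set of $Y$ is convex, averaging $Y$ with $UYU$ annihilates the off-diagonal blocks, reducing to $Y = \text{diag}(Y_{11}, Y_{22})$; a further averaging with $-VYV = \text{diag}(-Y_{22}, -Y_{11})$ then produces $Y = \text{diag}(R, -R)$, where $R := (Y_{11}-Y_{22})/2$ is easily seen to be a self-adjoint contraction.

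With this ansatz, a permutation of the $4 \times 4$ block form of $M(\text{diag}(R, -R))$ puts it into the direct sum
\[
\begin{pmatrix} I+R & 2\Re(X) \\ 2\Re(X) & I+R \end{pmatrix} \oplus \begin{pmatrix} I-R & 2\Im(X) \\ 2\Im(X) & I-R \end{pmatrix},
\]
whose positivity decouples into the positivity of each summand. Applying Douglas' factorization theorem to each summand (with a short approximation in case $I \pm R$ fails to be invertible), and using the self-adjointness of $\Re(X)$ and $\Im(X)$ to symmetrize the middle contraction, we obtain $2\Re(X) = (I+R)^{1/2} C_1 (I+R)^{1/2}$ and $2\Im(X) = (I-R)^{1/2} C_2 (I-R)^{1/2}$ for self-adjoint contractions $C_1, C_2$. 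Absorbing the factor $\tfrac{1}{\sqrt{2}}$ into the square roots yields the stated factorizations.

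The converse direction is direct: given self-adjoint contractions $R, C_1, C_2$ of the required form, set $Y = \text{diag}(R, -R)$ and verify each summand above is positive by sandwiching the elementary inequality $\begin{pmatrix} I & C_j \\ C_j & I \end{pmatrix} \geq 0$ between the block-diagonal factors $\text{diag}((I \pm R)^{1/2}, (I \pm R)^{1/2})$; Lemma \ref{lemma1 for normal}(iii) then yields that $\varphi$ is completely positive. The principal obstacle is the symmetry-based reduction to $Y = \text{diag}(R, -R)$, in which the specific off-diagonal structure of $\widetilde{X}$ is essential; the subsequent decoupling and Douglas factorization are then routine.
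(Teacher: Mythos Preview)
Your proof is correct and follows essentially the same route as the paper: both invoke Lemma~\ref{lemma1 for normal}(iii), both arrive at the same $R=\tfrac{Y_{11}-Y_{22}}{2}$, and both finish with the standard contraction factorization and the direct converse via $Y=\mathrm{diag}(R,-R)$. The only stylistic difference is that the paper writes out the $4\times 4$ block matrix, permutes it, extracts the two $2\times 2$ principal blocks $\bigl(\begin{smallmatrix} I+Y_{11} & 2X_1\\ 2X_1 & I-Y_{22}\end{smallmatrix}\bigr)$ and its row--column swap, and averages those, whereas you symmetrize $Y$ itself via the unitaries $U,V$ before extracting blocks; your symmetrization is a cleaner way to the same reduction.
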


\begin{proof}
	$``\Rightarrow"$. Let $\varphi$ be completely positive. Suppose $X_1=\Re{(X)}, X_2=\Im{(X)}$. Then, by Lemma \ref{lemma1 for normal}, there exists a self-adjoint contraction $Y=\begin{pmatrix}
		Y_{11} & Y_{12}\\
		Y_{12}^* & Y_{22}\\
	\end{pmatrix}$ such that
	\begin{align*}
			&\left(\begin{array}{cc|cc}
				I+Y_{11} & Y_{12} & 0 & 2X_1\\
				Y_{12}^* & I+Y_{22} & 2X_2 & 0\\
				\hline
				0 & 2X_2 & I-Y_{11} & -Y_{12}\\
				2X_1 & 0 & -Y_{12}^* & I-Y_{22}\\
			\end{array}\right)\geq 0\\
			&\Leftrightarrow \left(\begin{array}{cc|cc}
				I+Y_{11} & 2X_1 & Y_{12} & 0\\
				2X_1 & I-Y_{22} & 0 & -Y_{12}^*\\
				\hline
				Y_{12}^* & 0 & I+Y_{22} & 2X_2\\
				0 & -Y_{12} & 2X_2 & I-Y_{11}\\
			\end{array}\right)\geq 0.
	\end{align*}
	This implies that
	\begin{align*}
		\begin{pmatrix}
			I+Y_{11} & 2X_1 \\
			2X_1 & I-Y_{22}\\
		\end{pmatrix}\geq 0, \quad \begin{pmatrix}
		I-Y_{22} & 2X_1 \\
		2X_1 & I+Y_{11}\\
		\end{pmatrix}\geq 0.
	\end{align*}
	Therefore, we have
	\begin{align*}
		&\begin{pmatrix}
			I+\frac{Y_{11}-Y_{22}}{2} & 2X_1 \\
			2X_1 & I+\frac{Y_{11}-Y_{22}}{2}\\
		\end{pmatrix}\geq 0\\
		&\Leftrightarrow \begin{pmatrix}
			I+R & 2X_1 \\
			2X_1 & I+R\\
		\end{pmatrix}\geq 0, \text{ where } R=\frac{Y_{11}-Y_{22}}{2}\\
		&\Leftrightarrow \begin{pmatrix}
			I & (I+R)^{-\frac{1}{2}}2X_1(I+R)^{-\frac{1}{2}} \\
			(I+R)^{-\frac{1}{2}}2X_1(I+R)^{-\frac{1}{2}} & I\\
		\end{pmatrix}\geq 0\\
		&\Leftrightarrow\Vert(I+R)^{-\frac{1}{2}}2X_1(I+R)^{-\frac{1}{2}}\Vert\leq 1
	\end{align*}
	Take $C_1=(I+R)^{-\frac{1}{2}}2X_1(I+R)^{-\frac{1}{2}}$. Then $X_1=\sqrt{\frac{I+R}{2}}C_1\sqrt{\frac{I+R}{2}}$ where $C_1, R$ are self-adjoint contractions. By similar computations, we have $X_2=\sqrt{\frac{I-R}{2}}C_2\sqrt{\frac{I-R}{2}}$ where $C_2$ is a self-adjoint contraction.

	$``\Leftarrow"$. Let $C_1,C_2, R$ be self-adjoint contractions such that
	\begin{align*}
		X_1=\sqrt{\frac{I+R}{2}}C_1\sqrt{\frac{I+R}{2}}, \quad X_2=\sqrt{\frac{I-R}{2}}C_2\sqrt{\frac{I-R}{2}}.
	\end{align*}
	Take $Y=\begin{pmatrix}
		R & 0 \\
		0 & -R\\
	\end{pmatrix}$ and check that $\begin{pmatrix}
	I+Y & \widetilde{X} \\
	\widetilde{X}^* & I-Y\\
	\end{pmatrix}\geq 0$ where $\widetilde{X}=\begin{pmatrix}
	0 & 2X_1\\
	2X_2 & 0\\
	\end{pmatrix}$. Therefore, by Lemma \ref{lemma1 for normal}, $\varphi$ is completely positive.
\end{proof}

\begin{proof}[\textbf{\textit{Proof of Theorem \ref{main theorem for normal} $[(i)\Leftrightarrow (iv)]$}}]
	$(i)\Rightarrow (iv)$. Let $\varphi$ be completely positive.  Suppose $X_1=\Re{(X)}, X_2=\Im{(X)}$. Then, by Lemma \ref{lemma2 for normal}, we have
	\begin{align*}
		X_1=\sqrt{\frac{I+R}{2}}C_1\sqrt{\frac{I+R}{2}}, \quad X_2=\sqrt{\frac{I-R}{2}}C_2\sqrt{\frac{I-R}{2}}
	\end{align*}
	where $C_1,C_2, R\in\mathcal{B}(\mathcal{H})$ are self-adjoint contractions. Let $\alpha,\beta,\gamma\in\mathbb{C}$. Check that
	\begin{align*}
		\alpha I+\beta X_1+\gamma X_2=V^*\begin{pmatrix}
			\alpha I+\beta C_2 & 0\\
			0 & \alpha I+\gamma C_2\\
		\end{pmatrix}V
	\end{align*}
	where $V=\begin{pmatrix}
		\sqrt{\frac{I+R}{2}}\\
		\sqrt{\frac{I-R}{2}} \\
	\end{pmatrix}$ is an isometry. Consider $A_1=\begin{pmatrix}
	C_1 & \\
	 & 0\\
	\end{pmatrix}, A_2=\begin{pmatrix}
	0 & \\
	 & C_2\\
	\end{pmatrix}\in\mathcal{B}(\mathcal{H}\oplus\mathcal{H})$. Clearly, $A_1,A_2$ are self-adjoint, contractive dilations of $X_1,X_2$ such that $A_1A_2=0$.

	$(iv)\Rightarrow (i)$. Let $X_1,X_2$ have self-adjoint contractive dilations $A_1,A_2\in\mathcal{B}(\mathcal{H\oplus H})$ such that $A_1A_2=0$. Since $A_1A_2=0$, we have $\text{ran}A_2\subseteq\text{ker}A_1$. Therefore, writing $\mathcal{H\oplus H}=\text{ker}A_2\oplus\overline{\text{ran}A_2}$, we obtain
	\begin{align*}
		A_1=\begin{pmatrix}
			B_1 & 0\\
			0 & 0\\
		\end{pmatrix},\; A_2=\begin{pmatrix}
		0 & 0\\
		0 & B_2\\
		\end{pmatrix}
	\end{align*}
	where $B_1,B_2$ are self-adjoint, contractions. Let $W=\begin{pmatrix}
		W_1\\
		W_2\\
	\end{pmatrix}$ be an isometry such that $X_1=W^*A_1W$ and $X_2=W^*A_2W$. Since $W^*W=I$, we have $\vert W_1\vert^2+\vert W_2\vert^2=I$. Take $R=I-2\vert W_2\vert^2=2\vert W_1\vert^2-I$. Then, by polar decomposition, we write $W_1=V_1\sqrt{\frac{I+R}{2}},\; W_2=V_2\sqrt{\frac{I-R}{2}}$ where $V_1,V_2$ are partial isometry. Therefore, we obtain
	\begin{align*}
		X_1=\sqrt{\frac{I+R}{2}}C_1\sqrt{\frac{I+R}{2}}, \quad X_2=\sqrt{\frac{I-R}{2}}C_2\sqrt{\frac{I-R}{2}}
	\end{align*}
	where $C_j=V_j^*B_jV_j$ for $j=1,2$. Hence, by Lemma \ref{lemma2 for normal}, $\varphi$ is completely positive.
\end{proof}

We say an operator $T$ is obtained by an affine transformation to $N$ if $T=\alpha I+\beta N+\gamma N^*$ where $\alpha,\beta,\gamma\in\mathbb{C}$. Moreover, this transformation is called invertible if $N$ is obtained by an affine transformation to $T$. As a consequence of Theorem \ref{main theorem for normal}, we deduce that the set of all unital completely positive maps coincides with the set of all unital contractive maps on the operator system generated by a normal matrix obtained by an invertible affine transformation to $N=\text{diag}(1,-1,i,-i)$, as discussed below.

\begin{cor}
	Let $T$ be a normal matrix obtained by an invertible affine transformation to $N=\text{diag}(1,-1,i,-i)$. Then every unital contractive map from $\mathcal{OS}(T)$ to $\mathcal{B}(\mathcal{H})$ is completely positive.
\end{cor}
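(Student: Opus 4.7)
The plan is to reduce this corollary directly to Theorem \ref{main theorem for normal}. The decisive observation is that invertibility of the affine transformation forces $\mathcal{OS}(T) = \mathcal{OS}(N)$. Indeed, writing $T = \alpha I + \beta N + \gamma N^{*}$ gives $\mathcal{OS}(T) \subseteq \mathcal{OS}(N)$, while invertibility supplies $\alpha', \beta', \gamma' \in \mathbb{C}$ with $N = \alpha' I + \beta' T + \gamma' T^{*}$, providing the reverse inclusion. As subspaces of $\mathbb{M}_{4}$ they carry the same operator norm, so any unital contractive map $\varphi \colon \mathcal{OS}(T) \to \mathcal{B}(\mathcal{H})$ is simultaneously a unital contractive map on $\mathcal{OS}(N)$.

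Before invoking Theorem \ref{main theorem for normal} I would record that $\varphi$ is automatically self-adjoint preserving. For any self-adjoint $a \in \mathcal{OS}(N)$ with $\|a\| \le 1$ and any $t \in \mathbb{R}$, the identity $(I + ita)^{*}(I + ita) = I + t^{2} a^{2}$ gives $\|I + ita\|^{2} = 1 + t^{2}\|a\|^{2}$. Contractivity then yields $\|I + it\varphi(a)\|^{2} \le 1 + t^{2}$. Writing $\varphi(a) = A + iB$ with $A, B$ self-adjoint and evaluating $\|(I - tB) + itA\|^{2}$ on a unit eigenvector of $B$ for its smallest eigenvalue $\lambda$ yields the lower bound $(1 - t\lambda)^{2}$; comparing with $1 + t^{2}$ as $t \to 0^{+}$ forces $\lambda \ge 0$, hence $B \ge 0$. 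The same argument applied to $-a$ gives $B \le 0$, whence $B = 0$.

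Setting $X = \varphi(N)$, the map $\varphi \colon \mathcal{OS}(N) \to \mathcal{OS}(X)$ is now unital, self-adjoint preserving, and contractive, so Theorem \ref{main theorem for normal} applies and delivers complete positivity. Since $\mathcal{OS}(T) = \mathcal{OS}(N)$, this is the desired conclusion for $\varphi \colon \mathcal{OS}(T) \to \mathcal{B}(\mathcal{H})$. I anticipate no real obstacle: the proof is essentially a translation of hypotheses across the identification $\mathcal{OS}(T) = \mathcal{OS}(N)$ followed by an application of Theorem \ref{main theorem for normal}, with the only minor subtlety---the automatic self-adjointness-preservation of a unital contraction---handled by the elementary norm estimate above.
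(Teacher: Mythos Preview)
Your proof is correct and follows essentially the same route as the paper: both arguments recognise that the invertible affine relation forces $\mathcal{OS}(T)=\mathcal{OS}(N)$, so that $\varphi$, viewed on $\mathcal{OS}(N)$, is a unital contraction to which Theorem~\ref{main theorem for normal} applies directly. The paper simply sets $X'=\alpha I+\beta X+\gamma X^{*}$ and observes that the induced map $\psi:\mathcal{OS}(N)\to\mathcal{OS}(X')$ is the same contraction, whereas you phrase it as an identification of operator systems; these are the same idea.

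Two minor remarks. First, your separate argument that $\varphi$ preserves self-adjoints is unnecessary here: the paper records in its preliminaries the standard implication that a unital contraction on an operator system is positive (hence self-adjoint preserving), so this comes for free. Second, your eigenvector argument tacitly assumes $B$ has an eigenvector for its least spectral value, which need not hold when $\mathcal{H}$ is infinite-dimensional; the fix via approximate eigenvectors is routine, but since the implication is already available you can simply drop this paragraph.
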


\begin{proof}
	Let $X\in\mathcal{B}(\mathcal{H})$. Suppose $\varphi:\mathcal{OS}(T)\rightarrow\mathcal{OS}(X)$ is a unital contractive map such that $\varphi(T)=X$. Since $T$ is obtained by an invertible affine transformation to $N$, we have $N=\alpha I+\beta T+\gamma T^*$ where $\alpha,\beta,\gamma\in\mathbb{C}$. Take $X'=\alpha I+\beta X+\gamma X^*$. Indeed, the map $\psi:\mathcal{OS}(N)\rightarrow\mathcal{OS}(X')$ with $\psi(N)=X'$ is contractive.  This implies, by Theorem \ref{main theorem for normal}, that $\psi$ is completely positive. Hence $\varphi$ is completely positive. This completes the proof.
\end{proof}

The following example provides a unital, self-adjoint preserving map on the operator system generated by a (asymmetric) normal $T\in\mathbb{M}_4$, which is contractive on $\text{span}\{\Re{(T)},\Im{(T)}\}$ but fails to be contractive on $\mathcal{OS}(T)$. This stands in contrast to the case of a unital self-adjoint preserving map on the operator system generated by $J_3$ (recall Theorem \ref{2-positive on J3}).

\begin{eg}
	Let $T=\text{diag}(\frac{1}{2},-1,i,-i)$ and $X=\begin{pmatrix}
		\frac{1}{4}& 0.4i\\
		0.4i & -\frac{1}{\sqrt{2}}\\
	\end{pmatrix}$. Consider the unital self-adjoint preserving map $\varphi:\mathcal{OS}(T)\rightarrow\mathcal{OS}(X)$ with $\varphi(T)=X$.  Since $\Vert\beta\Re{(X)}+\gamma\Im{(X)}\Vert_f\leq\max\{\vert\beta\vert,\vert\gamma\vert\}$ for all $\beta,\gamma\in\mathbb{C}$. So $\varphi$ is contractive on $\text{span}\{\Re{(T)},\Im{(T})\}$ (also on $\text{span}\{I_4,\Re{(T)}\}$ and $\text{span}\{I_4,\Im{(T)}\}$). But $\varphi$ is not contractive on $\mathcal{OS}(T)$ (and hence not completely positive). Because choose $\beta=3.79+0.2i,\gamma=0.1-2.67945i$ and check that $\Vert I+\beta\Re{(X)}+\gamma\Im{(X)}\Vert>2.8974>2.8968>\Vert I+\beta\Re{(T)}+\gamma\Im{(T)}\Vert$. 
\end{eg}

An intriguing question that naturally arises at this stage is the following:

\begin{que}\label{conjecture 2}
	Let $N=\text{diag}(\lambda,-1,i,-i)$ with $\Re{(\lambda)}\geq 0$. Is every unital contractive map from $\mathcal{OS}(N)$ to $\mathcal{B}(\mathcal{H})$ completely positive?
\end{que}

In light of Proposition \ref{reduction lemma}, Question \ref{conjecture 2} equivalently states that, given a normal matrix $T\in\mathbb{M}_4$, does every unital contractive map on $\mathcal{OS}(T)$ admit a positive extension to $\mathbb{M}_4$? We are not able to answer this at this point. We leave this for furthur investigation in the future. However, it should be noted that this does not hold in general for a unital contractive map on an arbitrary operator system within a $C^*$-algebra. The reader may refer \cite{PR} for examples.

\section{Remarks and Examples}\label{remarks and examples}

In this section, we discuss some remarks on the matricial range of $J_n$ and present an example of a unital contractive map on the operator system generated by a $3\times 3$ matrix which is not completely positive. In addition, we provide several examples illustrating both contractive and non-contractive maps on operator systems generated by normal matrices and derive some applications on the constrained unitary dilations of $J_2$.

The matricial range of $J_n$ is invariant under transposition and rotation. To discuss this, we begin with the following lemma. Given $T\in\mathcal{B}(\mathcal{H})$, $T^t$ denotes the transpose of $T$ with respect to a fixed orthonormal basis of $\mathcal{H}$.

\begin{lem}\label{transpose map is positive}
	Let $T\in\mathcal{B}(\mathcal{H})$. Then $\overline{W(T)}=\overline{W(T^t)}$.
\end{lem}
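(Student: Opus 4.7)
The plan is to show the stronger statement $W(T) = W(T^t)$, from which the equality of closures is immediate. Fix the orthonormal basis $\{e_i\}$ with respect to which the transpose is defined, and let $C$ denote the associated coordinate-wise conjugation, that is, the antiunitary map sending $\sum c_i e_i$ to $\sum \overline{c_i} e_i$. The definition of $T^t$ then reads $T^t = C T^* C$.

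First, I would observe that $C$ is norm-preserving and satisfies $\langle Cx, Cy\rangle = \overline{\langle x, y\rangle}$ and $C^2 = I$. Given a unit vector $f \in \mathcal{H}$, I would set $g = Cf$ (also a unit vector) and compute
\begin{align*}
\langle T^t f, f\rangle = \langle CT^*Cf, f\rangle = \langle CT^*Cf, CCf\rangle = \overline{\langle T^*Cf, Cf\rangle} = \langle T Cf, Cf\rangle = \langle Tg, g\rangle,
\end{align*}
using that $\overline{\langle T^* u, u\rangle} = \overline{\langle u, Tu\rangle} = \langle Tu, u\rangle$ for any vector $u$. This shows every element of $W(T^t)$ lies in $W(T)$, hence $W(T^t) \subseteq W(T)$.

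For the reverse inclusion I would simply invoke symmetry: since $(T^t)^t = T$ (with respect to the same basis), applying the argument above with $T$ replaced by $T^t$ yields $W(T) \subseteq W(T^t)$. Combining, $W(T) = W(T^t)$ and therefore $\overline{W(T)} = \overline{W(T^t)}$. There is no real obstacle here; the only subtlety worth flagging is the antilinear bookkeeping in the first display, which is why I would spell out the step $\langle Cx, Cy\rangle = \overline{\langle x, y\rangle}$ at the outset rather than trying to manipulate coordinates by hand.
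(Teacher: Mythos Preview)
Your argument is correct and in fact proves the sharper equality $W(T)=W(T^t)$, not merely of the closures. The antiunitary bookkeeping is handled cleanly, and the symmetry $(T^t)^t=T$ gives the reverse inclusion without extra work.

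The paper proceeds differently: it observes that the unital self-adjoint preserving map $\varphi:\mathcal{OS}(T)\to\mathcal{OS}(T^t)$ sending $T\mapsto T^t$ is simply the restriction of the transpose map, which is positive (since $A\ge 0$ implies $A^t\ge 0$), and then invokes Lemma~\ref{equivalent criterion for positivity} to conclude $W(T^t)\subseteq\overline{W(T)}$; symmetry again finishes. Your approach is more self-contained and elementary---it requires no appeal to Lemma~\ref{equivalent criterion for positivity} and yields equality of the numerical ranges themselves---whereas the paper's route is shorter to state and dovetails with the operator-system framework developed there. Both are perfectly valid.
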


\begin{proof}
	Let $\varphi:\mathcal{OS}(T)\rightarrow\mathcal{OS}(T^t)$ be a unital self-adjoint preserving map with $\varphi(T)=T^t$. Indeed, $\varphi$ is positive and hence the proof follows by Lemma \ref{equivalent criterion for positivity}.
\end{proof}

\begin{rmk}
	Let $k\in\mathbb{N}$. Then $W_k(J_n)$ is transpose-invariant.
\end{rmk}

\begin{proof}
	Let $X\in W_k(J_n)$ and $B\in\mathbb{M}_k$. Then, by Theorem \ref{description of the matricial range}, we have $W(B^t\otimes X)\subseteq W(B^t\otimes J_n)$. Since $J_n$ is unitarily similar to $J_n^t$, we obtain $W(B^t\otimes J_n)=W(B^t\otimes J_n^t)$. Therefore, $W(B^t\otimes X)\subseteq W(B^t\otimes J_n^t)$ and hence, by Lemma \ref{transpose map is positive}, $W(B\otimes X^t)\subseteq W(B\otimes J_n)$ for all $B\in\mathbb{M}_k$. This implies, by Theorem \ref{description of the matricial range}, that $X^t\in W_k(J_n)$. This shows that $W_k(J_n)$ is transpose-invariant.
\end{proof}

\begin{lem}\label{disc}
	Let $B\in\mathbb{M}_k$. Then $W(B\otimes J_n)$ is a disc.
\end{lem}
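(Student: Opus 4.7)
The plan is to prove that $W(B\otimes J_n)$ is invariant under every rotation about the origin, and then to deduce from convexity and compactness that it is a closed disc centred at $0$.

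First I would exhibit, for every $\theta\in\mathbb{R}$, an explicit unitary implementing the similarity between $J_n$ and $e^{i\theta}J_n$. Taking the diagonal unitary $D_\theta=\text{diag}(1,e^{i\theta},e^{2i\theta},\ldots,e^{i(n-1)\theta})\in\mathbb{M}_n$, a direct entry-wise computation on the superdiagonal entries gives $D_\theta^*J_nD_\theta=e^{i\theta}J_n$. Tensoring by $I_k$ on the first factor then yields
\begin{align*}
(I_k\otimes D_\theta)^*(B\otimes J_n)(I_k\otimes D_\theta)=B\otimes(e^{i\theta}J_n)=e^{i\theta}(B\otimes J_n).
\end{align*}
Since the numerical range is invariant under unitary similarity and satisfies $W(\lambda T)=\lambda W(T)$, this forces $W(B\otimes J_n)=e^{i\theta}W(B\otimes J_n)$ for every $\theta\in\mathbb{R}$.

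Finally, I would invoke the elementary fact that any nonempty compact convex subset $W\subseteq\mathbb{C}$ satisfying $e^{i\theta}W=W$ for all $\theta\in\mathbb{R}$ is a closed disc centred at $0$. Indeed, if $W=\{0\}$ there is nothing to prove; otherwise, setting $R:=\max\{|z|:z\in W\}>0$ and picking $z_0\in W$ with $|z_0|=R$, the entire circle $\{e^{i\theta}z_0:\theta\in[0,2\pi)\}$ lies in $W$, so convexity of $W$ forces the closed disc of radius $R$ about $0$ to be contained in $W$, while the reverse inclusion holds by the definition of $R$. Applied to $W=W(B\otimes J_n)$, which is convex by Toeplitz--Hausdorff and compact since $B\otimes J_n$ acts on a finite-dimensional space, this completes the argument. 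There is no serious obstacle here: once the diagonal unitary $D_\theta$ is written down, the rest is routine, and the key insight is simply that a rotation-invariant convex compact subset of $\mathbb{C}$ must be a disc centred at $0$.
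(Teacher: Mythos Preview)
Your proof is correct and is essentially the same as the paper's: the paper shows directly that if $z=\langle (B\otimes J_n)f,f\rangle$ with $f=(f_1,\ldots,f_n)^t$ then $e^{i\theta}z=\langle (B\otimes J_n)g,g\rangle$ with $g=(f_1,e^{i\theta}f_2,\ldots,e^{i(n-1)\theta}f_n)^t$, which is precisely your unitary conjugation by $I_k\otimes D_\theta$ written at the vector level. You additionally spell out the convexity/compactness step that a rotation-invariant compact convex set is a disc, which the paper leaves implicit.
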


\begin{proof}
	Let $z\in W(B\otimes J_n)$. Then there exists $f=(f_1,\hdots,f_n)^t\in\mathbb{C}^k\otimes\mathbb{C}^n$ with $\Vert f\Vert=1$ such that $z=\langle B\otimes J_3 f,f\rangle=\sum_{j=1}^{n-1}\langle Bf_{j+1},f_j\rangle$. Let $\theta\in[0,2\pi)$. Then $e^{i\theta}z=\langle B\otimes J_3 g,g\rangle$ where $g=(f_1,e^{i\theta}f_2,\hdots,e^{i(n-1)\theta}f_n)^t$ is a unit vector in $\mathbb{C}^k\otimes\mathbb{C}^n$. Therefore, $e^{i\theta}z\in W(B\otimes J_n)$ for all $\theta\in[0,2\pi)$. This shows that $W(B\otimes J_n)$ is a disc.
\end{proof}

\begin{rmk}
	Let $k\in\mathbb{N}$. Then $W_k(J_n)$ is rotation-invariant.
\end{rmk}

\begin{proof}
	Let $X\in W_k(J_n)$ and $B\in\mathbb{M}_k$. Then, by Theorem \ref{description of the matricial range}, we have $W(B\otimes X)\subseteq W(B\otimes J_n)$. Let $\theta\in[0,2\pi)$. Then $W(B\otimes e^{i\theta}X)\subseteq e^{i\theta}W(B\otimes J_n)$. Therefore, by Lemma \ref{disc}, we obtain $W(B\otimes e^{i\theta}X)\subseteq W(B\otimes J_n)$. This implies, by Theorem \ref{description of the matricial range}, that $e^{i\theta}X\in W_k(J_n)$ for all $\theta\in[0,2\pi)$. This shows that $W_k(J_n)$ is rotation-invariant.
\end{proof}

Now we present an example of a unital contractive map on the operator system generated by a $3\times 3$ matrix which is not completely positive. To discuss this, we need the following lemmas.

\begin{lem}\label{transpose equivalent}
	Let $B\in\mathbb{M}_2$. Then $B$ is unitarily similar to $B^t$.
\end{lem}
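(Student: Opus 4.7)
The plan is to apply Schur's decomposition to reduce to the upper-triangular case, and then construct an explicit unitary conjugating such a matrix to its transpose. By Schur's theorem, there exists a unitary $U \in \mathbb{M}_2$ such that
\[
T := U^*BU = \begin{pmatrix} a & b \\ 0 & c \end{pmatrix}.
\]
By absorbing an appropriate diagonal phase $\mathrm{diag}(1, e^{i\theta})$ into $U$, I may further assume $b \geq 0$. It then suffices to prove that $T$ is unitarily similar to $T^t$, since the conjugation can be transported back to $B$ in the last step.

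For the unitary similarity $T \simeq T^t$, if $T$ is a scalar matrix (or more generally if $b = 0$) then $T = T^t$ trivially. Otherwise I propose the unitary
\[
V = \frac{1}{\sqrt{|a-c|^2 + b^2}}\begin{pmatrix} \overline{a-c} & b \\ b & c-a \end{pmatrix},
\]
whose columns are chosen so that $Ve_2$ is a unit eigenvector of $T^t$ for eigenvalue $c$ and $Ve_1$ spans its orthogonal complement. A direct $2\times 2$ matrix multiplication then verifies $V^*TV = T^t$. The only place where realness of $b$ is actually used is in making the $(2,1)$-entry of $V^*TV$ equal to $b$ rather than $\overline{b}$, so the earlier phase normalization is essential.

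Finally, to pass from $T$ back to $B$, I set $W := U V U^t$. Taking complex conjugates of $U^*U = I$ yields $\overline{U}\,U^t = I$, whence $W^*W = \overline{U}V^*U^*UVU^t = \overline{U}U^t = I$, so $W$ is unitary. Using $B = UTU^*$, $V^*TV = T^t$, and the identity $(U^*)^t = \overline{U}$, one computes $W^*BW = (\overline{U}V^*U^*)(UTU^*)(UVU^t) = \overline{U}\,T^t\,U^t = (UTU^*)^t = B^t$, completing the proof. The main technical step is the explicit verification that the proposed $V$ conjugates $T$ to $T^t$; everything else is essentially bookkeeping. An alternative route would invoke Pearcy's criterion for unitary similarity of $2\times 2$ matrices, reducing the claim to a few trace identities of the form $\mathrm{tr}(B^k(B^*)^\ell) = \mathrm{tr}((B^t)^k((B^t)^*)^\ell)$, but this would require importing an external result rather than giving a self-contained argument.
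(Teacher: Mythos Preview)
Your proof is correct, but the paper takes a much shorter route. Instead of Schur triangularization, the paper reduces to the case where $B$ has \emph{equal} diagonal entries, i.e.\ $B=\begin{pmatrix} r & x\\ y & r\end{pmatrix}$; this reduction is legitimate because the numerical range of a $2\times 2$ matrix is convex and contains $\tfrac{1}{2}\mathrm{tr}(B)$, so one can always find an orthonormal basis in which both diagonal entries equal $\tfrac{1}{2}\mathrm{tr}(B)$. Once $B$ is in this form, the swap matrix $U=\begin{pmatrix}0&1\\1&0\end{pmatrix}$ satisfies $U^*BU=B^t$ immediately, with no further computation.

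By contrast, your upper-triangular normal form forces you to build a bespoke unitary $V$ depending on $a,c,b$ and to carry out a genuine $2\times 2$ matrix verification (plus the phase normalization to make $b$ real, and the transport step $W=UVU^t$). All of this is correct, and your argument has the virtue of being fully self-contained with every step spelled out, whereas the paper's ``without loss of generality'' hides both the equal-diagonal reduction and the transport argument you wrote out carefully. But the paper's choice of normal form makes the conjugating unitary trivial, which is the real economy here.
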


\begin{proof}
	Without loss of generality, let $B=\begin{pmatrix}
		r & x\\
		y & r\\
	\end{pmatrix}$ for some $r,x,y\in\mathbb{C}$. Take $U=\begin{pmatrix}
		0 & 1\\
		1 & 0\\
	\end{pmatrix}$ and check that $U^*BU=B^t$. So $B$ is unitarily similar to $B^t$.
\end{proof}

\begin{lem}\label{transpose map}
	Let $T\in\mathcal{B(\mathcal{H})}$. A unital selfadjoint preserving map $\varphi:\mathcal{OS}(T)\rightarrow\mathcal{OS}(T^t)$ with $\varphi(T)=T^t$ is $2$-positive.
\end{lem}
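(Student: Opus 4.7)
My plan is to exploit the fact, supplied by Lemma \ref{transpose equivalent}, that every matrix in $\mathbb{M}_2$ is unitarily similar to its transpose, combined with entrywise complex conjugation of the positive operator. First I will handle the degenerate case: if $W(T)^\circ=\emptyset$, then $\overline{W(T)}$ is a point or a line segment, so by the corollary to Theorem \ref{matricial range of self-adjoint operators} every positive map on $\mathcal{OS}(T)$ is completely positive; since $W(T^t)=\overline{W(T)}$ by Lemma \ref{transpose map is positive}, the map $\varphi$ is already completely positive, in particular $2$-positive. Otherwise $W(T)^\circ\neq\emptyset$, and picking any $\lambda\in W(T)^\circ$ and replacing $T$ by $T-\lambda I$ leaves $\mathcal{OS}(T)$ and $\varphi$ unchanged while arranging $0\in W(T)^\circ$. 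With this assumption in force, Lemma \ref{n-positivity} reduces the problem to the following: for every $B\in\mathbb{M}_2$ satisfying $I_2\otimes I+B\otimes T+B^*\otimes T^*\geq 0$, one also has $I_2\otimes I+B\otimes T^t+B^*\otimes (T^t)^*\geq 0$.

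The heart of the argument is then a two-move computation. Applying entrywise complex conjugation (which preserves positivity of self-adjoint operators and distributes over tensor products) to the assumed inequality, and using the entrywise identities $\overline{M^*}=M^t$ and $\overline{M}=(M^t)^*$ valid for any matrix $M$, I obtain
\[
I_2\otimes I+B^t\otimes T^t+(B^t)^*\otimes (T^t)^*\geq 0.
\]
By Lemma \ref{transpose equivalent}, there exists a unitary $U\in\mathbb{M}_2$ with $U^*BU=B^t$, hence $UB^tU^*=B$ and $U(B^t)^*U^*=B^*$ upon taking adjoints. Conjugating the previous inequality by $U\otimes I$ then yields exactly $I_2\otimes I+B\otimes T^t+B^*\otimes (T^t)^*\geq 0$, completing the proof.

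The main obstacle is conceptual rather than computational: the argument rests decisively on Lemma \ref{transpose equivalent}, a phenomenon unique to $\mathbb{M}_2$. For $n\geq 3$ a generic $n\times n$ matrix is not unitarily similar to its transpose, so this strategy cannot extend to prove $n$-positivity of $\varphi$ in general. This is consistent with the well-known fact that the full transpose map on $\mathbb{M}_n$ fails to be $2$-positive for $n\geq 2$; here $2$-positivity on $\mathcal{OS}(T)$ is recovered precisely because the coefficient $B$ is constrained to live in $\mathbb{M}_2$, where the structural lemma applies. A secondary subtlety, dealt with in the opening reduction, is that Lemma \ref{n-positivity} has the standing hypothesis $0\in W(T)^\circ$, which forces the separate treatment of the case $W(T)^\circ=\emptyset$.
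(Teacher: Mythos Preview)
Your proof is correct and follows essentially the same approach as the paper's. Both arguments first dispose of the degenerate case $W(T)^\circ=\emptyset$, translate so that $0\in W(T)^\circ$, and then combine Lemma~\ref{transpose equivalent} with a positivity-preserving involution to pass from $I_2\otimes I+B\otimes T+B^*\otimes T^*\geq 0$ to $I_2\otimes I+B\otimes T^t+B^*\otimes (T^t)^*\geq 0$. The only cosmetic difference is the order of the two moves: the paper first conjugates by $U\otimes I$ (via Lemma~\ref{transpose equivalent}) to replace $B$ by $B^t$, and then applies the full transpose; you first apply entrywise complex conjugation and then conjugate by $U\otimes I$. Since transpose and complex conjugation coincide on self-adjoint operators, these are the same two operations in reversed order.
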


\begin{proof}
	Let $W(T)^\circ=\emptyset$. Then $T$ is normal. So $T^t$ is also normal and hence $\varphi$ is completely positive. Let $\lambda\in W(T)^\circ$. Take $S=T-\lambda I$. So $0\in W(S)^\circ$. It is now equivalent to prove the unital selfadjoint preserving map $\psi:\mathcal{OS}(S)\rightarrow\mathcal{OS}(S^t)$ with $\psi(S)=S^t$ is $2$-positive. Let $B\in\mathbb{M}_2$ such that $I_2\otimes I+B\otimes S+B^*\otimes S^*\geq 0$. Now
	\begin{align*}
		I_2\otimes I+B\otimes S+B^*\otimes S^*\geq 0
		&\Rightarrow I_2\otimes I+B^t\otimes S+{B^*}^t\otimes S^*\geq 0, \text{ using Lemma } \ref{transpose equivalent}\\
		&\Rightarrow I_2\otimes I+B\otimes S^t+{B^*}\otimes {S^t}^*\geq 0
	\end{align*}
	Therefore, by Lemma \ref{n-positivity}, $\psi$ is $2$-positive and hence $\varphi$ is $2$-positive.
\end{proof}

\begin{eg}\label{weighted jordan block}
	Consider
	\begin{align*}
		T=\begin{pmatrix}
			0 & 1& 0\\
			0 & 0 & 2\\
			0 & 0 & 0\\
		\end{pmatrix}.
	\end{align*}
	Let $\varphi:\mathcal{OS}(T)\rightarrow\mathcal{OS}(T^t)$ be a unital self-adjoint preserving map with $\varphi(T)=T^t$. By Lemma \ref{transpose map}, $\varphi$ is $2$-positive and hence contractive. But $\varphi$ is not completely positive. To see this, take $B=\frac{T^t}{4}$. The eigen values of $I\otimes I+B\otimes T+B^*\otimes T^*$ are $0, \frac{3}{4},1,1,1,\frac{5}{4},2,\frac{\sqrt{2}-1}{\sqrt{2}},\frac{\sqrt{2}+1}{\sqrt{2}}$. Since all the eigen values of $I\otimes I+B\otimes T+B^*\otimes T^*$ are positive, we have $I\otimes I+B\otimes T+B^*\otimes T^*\geq 0$. But the eigen values of $I\otimes I+B\otimes T^t+B^*\otimes {T^t}^*$ are $\frac{1}{2},\frac{1}{2},1,1,1,\frac{3}{2},\frac{3}{2},\frac{4+\sqrt{17}}{2},\frac{4-\sqrt{17}}{2}$. Since one of the eigen values of $I\otimes I+B\otimes T^t+B^*\otimes {T^t}^*$ is negative, we have $I\otimes I+B\otimes T^t+B^*\otimes {T^t}^*\ngeq 0$. This produces an example of a unital contractive map on the operator system generated by a $3\times 3$ matrix which is not completely positive.
\end{eg}

Next, we discuss a few examples of contractive and non-contractive maps on operator systems generated by normal matrices and derive some applications on the constrained unitary dilations of $J_2$. Recall, from Lemma \ref{observation}, that $J_2$ has no unitary dilation $U$ satisfying $-\frac{1}{2}\leq\Re{(U)}\leq\frac{1}{2}$. The following example provides a unitary dilation $U$ of $J_2$ satisfying $-\frac{1}{\sqrt{2}}\leq\Re{(U)}\leq\frac{1}{\sqrt{2}}$.

\begin{eg}\label{eg1}
	Let $T=\text{diag}(e^{-i\frac{\pi}{4}}, e^{i\frac{\pi}{4}}, e^{i\frac{3\pi}{4}},e^{-i\frac{3\pi}{4}})$. Consider the unital self-adjoint preserving map $\varphi:\mathcal{OS}(T)\rightarrow\mathcal{OS}(J_3)$ with $\varphi(T)=J_3$. Take $N=e^{i\frac{\pi}{4}}T=\text{diag}(1,i,-1,-i)$ and $X=e^{i\frac{\pi}{4}}J_3$. Let $\psi:\mathcal{OS}(N)\rightarrow\mathcal{OS}(X)$ be the unital self-adjoint preserving map with $\psi(N)=X$. We claim that $\psi$ is completely positive. Let $B\in\mathbb{M}_n$ such that $I_n\otimes I_4+B\otimes T+B^*\otimes T^*\geq 0$. This implies that $-\frac{1}{2}\leq\Re{(B)}\leq\frac{1}{2}$ and $-\frac{1}{2}\leq\Im{(B)}\leq\frac{1}{2}$. So $BB^*+B^*B=2(\Re{(B)}^2+\Im{(B)}^2)\leq 1$. Therefore, by Lemma \ref{lemma1 for 2-positivity}, $I_n\otimes I_2+B\otimes X+B^*\otimes X^*\geq 0$. So, by Lemma \ref{n-positivity}, $\psi$ is $n$-positive for every $n$ and hence completely positive. This implies that $\varphi$ is completely positive. So, by Proposition \ref{equivalent criterion for CP}, $U=T\otimes I$ is a unitary dilation of $J_3$ satisfying $-\frac{1}{\sqrt{2}}\leq\Re{(U)}\leq\frac{1}{\sqrt{2}}$. In particular, $U=T\otimes I$ is a unitary dilation of $J_2$ satisfying $-\frac{1}{\sqrt{2}}\leq\Re{(U)}\leq\frac{1}{\sqrt{2}}$. 
\end{eg}

Now we will discuss a few examples of non-contractive maps on operator systems generated by normal matrices. As a consequence, we deduce that $J_2$ has no unitary dilation $V\in\mathbb{M}_n$ satisfying $-t\leq\Re{(V)}\leq t$ for every $0\leq t<\frac{1}{\sqrt{2}}$.

\begin{eg}\label{eg2}
	Let $\theta_1,\hdots,\theta_n\in[0,2\pi)$ such that $\vert\cos\theta_j\vert<\frac{1}{\sqrt{2}}$ for all $j$ and $T=\text{diag}(e^{i\theta_1},\hdots, e^{i\theta_n})$. Consider the unital self-adjoint preserving map $\varphi:\mathcal{OS}(T)\rightarrow\mathcal{OS}(J_2)$ with $\varphi(T)=J_2$. We will prove that $\varphi$ is not contractive. To see this, we compute
	\begin{align*}
		&\Vert I_2+\beta J_2+\gamma J_2^*\Vert^2=1+\frac{\beta^2+\gamma^2}{2}+\sqrt{\frac{(\beta^2-\gamma^2)^2}{4}+(\beta+\gamma)^2},\\
		&\Vert I_2+\beta T+\gamma T^*\Vert^2=\max_{1\leq j\leq n} \{1+\beta^2+\gamma^2+2(\beta+\gamma)\cos\theta_j+2\beta\gamma\cos 2\theta_j\}
	\end{align*}
	for all $\beta,\gamma>0$. Since $\vert\cos\theta_j\vert<\frac{1}{\sqrt{2}}$ for all $j$, we have
	\begin{align*}
		\Vert I_2+\beta T+\gamma T^*\Vert^2
		&<1+\beta^2+\gamma^2+\sqrt{2}(\beta+\gamma)+2\beta\gamma\cos 2\theta\\
		&=1+\beta^2+\gamma^2+\sqrt{2}(\beta+\gamma)-2\beta\gamma(1-2\cos^2\theta)
	\end{align*}
	where $\cos 2\theta=\min\{\cos2\theta_j:1\leq j\leq n\}$. Let $n\in\mathbb{N}$ such that $\frac{1}{10^n}<1-2\cos^2\theta$. So it is sufficient to find $\beta,\gamma>0$ such that
	\begin{align*}
		\text{LHS}=1+\beta^2+\gamma^2+\sqrt{2}(\beta+\gamma)-\frac{2\beta\gamma}{10^n}<\Vert I_2+\beta J_2+\gamma J_2^*\Vert^2=\text{RHS}.
	\end{align*}
	Choose $\beta=10^{5n}$ and $\gamma=10^n$ and check that LHS<RHS. Therefore, $\varphi$ is not contractive (and hence not completely positive).
\end{eg}

The following remark strengthens Lemma \ref{observation}. 

\begin{rmk}
	$J_2$ has a unitary dilation $U$ satisfying $-\frac{1}{\sqrt{2}}\leq\Re{(U)}\leq\frac{1}{\sqrt{2}}$. Furthurmore, $J_2$ has no unitary dilation $V\in\mathbb{M}_n$ satisfying $-t\leq\Re{(V)}\leq t$ for every $0\leq t<\frac{1}{\sqrt{2}}$. The proof follows from Examples \ref{eg1} and \ref{eg2}. 
\end{rmk}

Finally, we conclude with the following example.

\begin{eg}
	Let $\theta_1,\hdots,\theta_n\in[0,2\pi)$ such that $\vert\cos\theta_j\vert>\frac{1}{\sqrt{2}}$ for all $j$ and $T=\text{diag}(e^{i\theta_1},\hdots, e^{i\theta_n})$. Consider the unital self-adjoint preserving map $\varphi:\mathcal{OS}(T)\rightarrow\mathcal{OS}(J_2)$ with $\varphi(T)=J_2$. Then $\varphi$ is not contractive (and hence not completely positive). Because if so then the unital self-adjoint preserving map $\psi:\mathcal{OS}(N)\rightarrow\mathcal{OS}(J_2)$ with $\varphi(N)=J_2$ is also contractive, where $N=iT$ (since $\Vert I_2+\beta J_2+\gamma J_2^*\Vert=\Vert I_2+i\beta J_2-i\gamma J_2^*\Vert$). This contradicts Example \ref{eg2}.
\end{eg}

\section{Acknowledgement}
The first-named author is extremely grateful to Chi-Kwong Li for insightful discussions during the CIMPA Workshop on Linear Preserver Problems held at IISER Bhopal, which were helpful in establishing Theorem \ref{main theorem for normal} $[(i)\Leftrightarrow (iv)]$. This author’s research is supported by the Institute Postdoctoral Fellowship of Indian Institute of Technology Bombay. The research of the second-named author is supported by the UGC through a Senior Research Fellowship.

\bibliographystyle{acm}
\bibliography{Bibliography}	

 \end{document}